\title{Building spanning trees quickly in Maker-Breaker games}
\author{ 
{Dennis Clemens \thanks{Department of Mathematics and Computer Science, Freie Universit\"{a}t Berlin, Germany. Email: d.clemens@fu-berlin.de. Research supported by DFG, project SZ 261/1-1.}} 
\quad{Asaf Ferber \thanks{School of Mathematical Sciences, Raymond and Beverly Sackler Faculty of Exact Sciences, Tel Aviv University, Tel Aviv, 69978, Israel. Email: ferberas@post.tau.ac.il.}}
\quad{Roman Glebov \thanks{Mathematics Institute and DIMAP, University of Warwick, Coventry CV4 7AL, UK. Previous affiliation: Institut f\"ur Mathematik, Freie Universit\"{a}t Berlin, Arnimallee 3-5, D-14195 Berlin, Germany. Email: glebov@zedat.fu-berlin.de. Research supported by DFG within the research training group "Methods for Discrete Structures".}}\\
\quad {Dan Hefetz \thanks{School of Mathematics,
University of Birmingham, Edgbaston, Birmingham B15 2TT, United
Kingdom. Email: d.hefetz@bham.ac.uk. Research supported by an EPSRC Institutional Sponsorship Fund.}}  
\quad{Anita Liebenau \thanks{Department of Mathematics and Computer Science, Freie Universit\"{a}t Berlin, Germany. Email: liebenau@math.fu-berlin.de. Research supported by DFG within the graduate school Berlin Mathematical School.}}}
\newif\ifnotesw\noteswtrue
\newtheorem{theorem}{Theorem}[section]
\newtheorem{lemma}[theorem]{Lemma}
\newtheorem{claim}[theorem]{Claim}
\newtheorem{conjecture}[theorem]{Conjecture}
\newtheorem{remark}[theorem]{Remark}
\let\theta=\vartheta
\let\rho=\varrho
\let\sigma=\varsigma
\def\D{\Delta}
\newenvironment{proof}{\noindent{\bf Proof\,}}{\hfill$\Box$}
\newcommand{\open}{\ensuremath{\mathcal{O}}}
\begin{document}
\maketitle

\begin{abstract}
For a tree $T$ on $n$ vertices, we study the Maker-Breaker game, played on the edge set of the
complete graph on $n$ vertices, which Maker wins as soon as the graph she builds contains a copy 
of $T$. We prove that if $T$ has bounded maximum degree, then Maker can win this game within
$n+1$ moves. Moreover, we prove that Maker can build almost every tree on $n$ vertices in $n-1$
moves and provide non-trivial examples of families of trees which Maker cannot build in $n-1$ moves.    
\end{abstract}

\section{Introduction}
Let $X$ be a finite set and let ${\mathcal F} \subseteq 2^X$ be a family of subsets. In the Maker-Breaker 
game $(X, {\mathcal F})$, two players, called Maker and Breaker, take turns in claiming a previously unclaimed
element of $X$, with Breaker going first. The set $X$ is called the board of the game and the members of 
${\mathcal F}$ are referred to as the winning sets. Maker wins this game as soon as she claims all elements 
of some winning set. If Maker does not fully claim any winning set by the time every board element is claimed 
by some player, then Breaker wins the game. We say that the game $(X, {\mathcal F})$ is Maker's win if
Maker has a strategy that ensures her win in this game (in some number of moves) against any strategy of Breaker, 
otherwise the game is Breaker's win. One can also consider a \emph{biased} version in which Maker claims $p$
board elements per move (instead of just 1) and Breaker claims $q$ board elements per move. We refer to this
version as a $(p : q)$ game. For a more detailed discussion, we refer the reader to~\cite{BeckBook}.

The following game was studied in~\cite{FHK2012}. Let $T$ be a tree on $n$ vertices. The board of the
\emph{tree embedding game} $(E(K_n), {\mathcal T}_n)$ is the edge set of the complete graph on $n$ vertices 
and the minimal (with respect to inclusion) winning sets are the labeled copies of $T$ in $K_n$. Several
variants of this game were studied by various researchers (see e.g.~\cite{Beck94, Bednarska, JKS}).  

It was proved in~\cite{FHK2012} that for any real numbers $0 < \alpha < 0.005$ and $0 < \varepsilon < 0.05$ 
and a sufficiently large integer $n$, Maker has a strategy to win the $(1 : q)$ game $(E(K_n), {\mathcal T}_n)$ 
within $n + o(n)$ moves, for every $q \leq n^{\alpha}$ and every tree $T$ with $n$ vertices and maximum degree
at most $n^{\varepsilon}$. The bounds on the duration of the game, on Breaker's bias and on the maximum degree
of the tree to be embedded, do not seem to be best possible. Indeed, it was noted in~\cite{FHK2012} that
it would be interesting to improve each of these bounds, even at the expense of the other two. In this paper 
we focus on the duration of the game. We restrict our attention to the case of bounded degree trees 
and to unbiased games (that is, the case $q=1$). 

The smallest number of moves Maker needs in order to win some Maker-Breaker game is an important game invariant 
which has received a lot of attention in recent years (see e.g.~\cite{Beck, CFKL2012, FH2011, FH, FHK2012, Gebauer, 
HKSS, HSt, Pekec}). Part of the interest in this invariant stems from its usefulness in the study of strong games.
In the \emph{strong game} $(X, {\mathcal F})$, two players, called Red and Blue, take turns in claiming one previously 
unclaimed element of $X$, with Red going first. The winner of the game is the \emph{first} player to fully claim some 
$F \in {\mathcal F}$. If neither player is able to fully claim some $F \in {\mathcal F}$ by the time every element of 
$X$ has been claimed by some player, the game ends in a \emph{draw}. Strong games are notoriously hard to analyze. 
For certain strong games, a combination of a \emph{strategy stealing} argument and a \emph{hypergraph coloring} 
argument can be used to prove that these games are won by Red. However, the aforementioned arguments are purely 
existential. That is, even if it is known that Red has a winning strategy for some strong game $(X, {\mathcal F})$, 
it might be very hard to describe such a strategy explicitly. The use of explicit very fast winning strategies for 
Maker in a weak game for devising an explicit winning strategy for Red in the corresponding strong game was initiated 
in~\cite{FH2011}. This idea was used to devise such strategies for the strong perfect matching and Hamilton 
cycle games~\cite{FH2011} and for the $k$-vertex-connectivity game~\cite{FH}.    

Returning to the tree embedding game $(E(K_n), {\mathcal T}_n)$, it is obvious that Maker cannot build any tree on 
$n$ vertices in less than $n-1$ moves. This trivial lower bound can be attained for some trees. For example, it was 
proved in~\cite{HKSS} that Maker can build a Hamilton path of $K_n$ in $n-1$ moves. On the other hand it is not hard 
to see that there are trees on $n$ vertices which Maker cannot build in less than $n$ moves. Indeed, consider
for example the complete binary tree on $n$ vertices $BT_n$. Suppose for a contradiction that Maker
can build a copy of $BT_n$ in $n-1$ moves. It follows that after $n-2$ moves, Maker's graph is isomorphic to
$BT_n \setminus e$, where $e$ is some edge of $BT_n$. Note that for any $e \in E(BT_n)$, there is a unique edge 
of $K_n$ which Maker has to claim in order to complete a copy of $BT_n$. Hence, by claiming this edge, Breaker 
delays Maker's win by at least one move. Note that, in contrast, if $e$ is an edge of a path $P_n$ which is not
incident with any of its endpoints, then there are four edges of $K_n$ whose addition to a copy of $P_n \setminus e$
yields a copy of $P_n$.         

In this paper we prove the following general upper bound which is only one move away from
the aforementioned lower bound.

\begin{theorem} \label{WeakGame}
Let $\Delta$ be a positive integer. Then there exists an integer
$n_0 = n_0(\Delta)$ such that for every $n \geq n_0$ and for every
tree $T=(V,E)$ with $|V|=n$ and $\Delta(T) \leq \Delta$, Maker has a strategy 
to win the game $(E(K_n), {\mathcal T}_n)$ within $n+1$ moves.
\end{theorem}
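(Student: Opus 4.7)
The plan is to exhibit an explicit strategy that embeds $T$ edge by edge, in a carefully chosen order, and to show that at most two of Maker's moves fail to contribute a new tree edge. Together with the $n-1$ tree edges, this gives the claimed bound of $n+1$ moves. Throughout, Maker maintains a partial embedding $\phi: V' \to V(K_n)$ of an initial segment $V' \subseteq V(T)$, together with all parent--child edges of the induced subtree in her graph.

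Structurally, I would fix a root $r \in V(T)$ and, using the bounded-degree hypothesis, single out a small collection of pendant leaves $\ell_1,\dots,\ell_s$ (say $s=2$) that are held in reserve for the very last $s$ moves. These leaves play the role of ``flexibility buffers'': when the time comes to place $\ell_i$, its parent in $T$ has been embedded long ago, $\Omega(n)$ host vertices remain free, and Breaker has claimed only $O(1)$ edges incident to $\phi(\text{parent}(\ell_i))$, so the insertion succeeds in one move. The remaining subtree $T'$ is embedded in BFS order: at each step, to extend by the next vertex $v$, Maker claims a free edge from $\phi(\text{parent}(v))$ to some free host vertex. To keep this always possible, I would have Maker run an anti-blocking subroutine: whenever an active embedded vertex $u$ (one with children in $T'$ still unplaced) accumulates too many Breaker-edges toward free hosts, Maker immediately extends at $u$. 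Since $T$ has bounded degree, each active $u$ has only $O(1)$ remaining children, so a box-game--style analysis lets Maker maintain this invariant while using essentially one move per tree edge.

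The main obstacle is to bound by two the number of Maker's moves that go to ``defensive'' (anti-blocking) extensions rather than productive BFS extensions. This requires choosing the right priority rule for processing active vertices and a potential function tracking the excess Breaker-degree at active vertices. A delicate interaction arises between the BFS order, which forces Maker to keep each internal parent active until all its children are embedded, and the need to preserve enough freedom for the reserved leaves $\ell_1,\dots,\ell_s$. I expect the proof will split into cases according to the structure of $T$ near its leaves---in particular, whether $T$ has a sufficient supply of penultimate vertices (whose only children are leaves) to serve as the reservoir, or whether, as for trees close to a path, a different reservoir (such as a long bare path) must be used---and that the final accounting then yields exactly two extra moves beyond the trivial $n-1$.
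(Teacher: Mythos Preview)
Your sketch has a genuine quantitative gap. With only $s=2$ reserved leaves, once $T'$ is embedded you have exactly two free host vertices left, and Breaker has already made roughly $n$ moves. Your assertion that ``Breaker has claimed only $O(1)$ edges incident to $\phi(\text{parent}(\ell_i))$'' is not justified: as soon as the parent of $\ell_i$ is embedded, Breaker sees it and can spend his subsequent moves claiming edges from that vertex to whatever free hosts remain. Since the pool of free hosts shrinks to two by the end, Breaker needs only a constant number of well-placed moves near the end to block the final insertions, and your anti-blocking subroutine gives no mechanism to prevent this. More broadly, you identify ``bounding the number of defensive moves by two'' as the main obstacle and then do not solve it; the potential-function idea is stated but never carried out, and a box-game comparison does not by itself yield the constant $2$.

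The paper's proof is organized around a dichotomy you gesture at in your last paragraph but do not execute. Either $T$ contains a long bare path, or (by a counting lemma) it has linearly many leaves with distinct parents. In both cases Maker reserves a \emph{linear} piece of $T$, not a constant-size one. Phase~1 embeds $T$ minus this piece with \emph{zero} wasted moves, using a potential function that controls Breaker's edges inside the set of available vertices together with the designated endpoints; the point of the linear reserve is that $\Omega(n)$ vertices remain available throughout Phase~1, so the potential stays bounded. Phase~2 is then a structured game on an almost-complete graph of linear size: a Hamilton-path game with two prescribed endpoints in the bare-path case (costing one extra move), or a bipartite perfect-matching game between the open parents and the remaining hosts in the many-leaves case (costing two extra moves). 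The $+1$ and $+2$ come from tight bounds on these auxiliary games, not from the bulk embedding. Your approach, by contrast, tries to absorb all the difficulty into a single BFS sweep with a constant reserve, and the endgame does not survive that.
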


Since every tree either does or does not admit a long bare path (a path of a tree $T$ is called \emph{bare} if all 
of its interior vertices are of degree 2 in $T$) we will deduce Theorem~\ref{WeakGame} 
as an immediate corollary of the following two theorems (with $m_2 = m_1$ and $n_0 = \max \{n_1, n_2\}$). 

\begin{theorem} \label{th::longBarePath}
Let $\Delta$ be a positive integer. Then there exists an integer $m_1 = m_1(\Delta)$ and an
integer $n_1 = n_1(\Delta, m_1)$ such that the following holds for every $n \geq n_1$ and for every
tree $T=(V,E)$ with $|V|=n$ and $\Delta(T) \leq \Delta$. If $T$ admits a bare path
of length $m_1$, then Maker has a strategy to win the game $(E(K_n), {\mathcal T}_n)$ within $n$ moves.
\end{theorem}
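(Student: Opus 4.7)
Let $P = u_0 u_1 \ldots u_{m_1}$ be the bare path of length $m_1$ in $T$, and let $T_1, T_2$ be the two subtrees of $T$ obtained by deleting the interior vertices $u_1, \ldots, u_{m_1-1}$, with $u_0 \in V(T_1)$ and $u_{m_1} \in V(T_2)$. Then $|V(T_1)| + |V(T_2)| = n - m_1 + 1$, and both subtrees still have maximum degree at most $\Delta$. The plan is to embed the forest $T_1 \cup T_2$ in a first phase using $n - m_1 - 1$ Maker moves, and then use the $m_1 - 1$ remaining unused vertices of $K_n$ together with the images $x, y$ of $u_0, u_{m_1}$ as the vertex set of a path of length $m_1$ completed in $m_1$ further moves, for a grand total of $n - 1 \le n$ Maker moves.

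In Phase 1, Maker processes the edges of $T_1$ and $T_2$ in a BFS-type order, so that each new edge has exactly one previously embedded endpoint, and extends by claiming an edge from it to a fresh target vertex. Since $\Delta(T)\le\Delta$ and Breaker has only $n$ edges over the whole game, a valid target is always available, provided Maker embeds high-degree vertices first and leaves last so as not to get stuck during the endgame of the phase. Crucially, Maker does not commit in advance to the set $R\subseteq V(K_n)$ of $m_1-1$ vertices that will play the role of the path interior: she steers her embedding so that the induced Breaker-graph on $R\cup\{x,y\}$ is sparse enough for Phase 2 to succeed.

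In Phase 2, Maker completes a path of length $m_1$ from $x$ to $y$ through $R$ in $m_1$ moves. Since $R\cup\{x,y\}$ has only a constant $m_1+1$ vertices and the Breaker-induced subgraph on it has bounded degree by the end of Phase 1, this reduces to a small auxiliary Maker-Breaker game that can be won in optimal time, for instance by growing path-fragments from $x$ and from $y$ alternately and choosing the junction vertex at the last possible moment.

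The main obstacle is the interface between the two phases. Because $m_1$ is a constant in $n$, only a constant number of edges are relevant to the Phase 2 path, so Breaker could in principle destroy any one pre-committed target. The remedy is to delay the choice of $R$ until the end of Phase 1 and exploit that the number of admissible configurations $(x,y,R,\pi)$, with $\pi$ a Hamilton $(x,y)$-path through $R$, is polynomial in $n$, while each Breaker edge blocks only a lower-order fraction of them; hence an adaptive Maker strategy can preserve at least one surviving configuration throughout Breaker's $n$ moves. Making this steering explicit and compatible with the BFS embedding of Phase 1 is the technical heart of the argument, and is exactly what forces $m_1=m_1(\Delta)$ to be chosen as a sufficiently large constant depending on $\Delta$.
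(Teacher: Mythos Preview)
Your two-phase decomposition matches the paper's exactly: delete the interior of the bare path to get the forest $T_1 \cup T_2$, embed that forest first without wasting moves, then thread a Hamilton path through the leftover vertices between the images $x,y$ of the bare path's endpoints. However, both phases are left as black boxes, and the sketches you give for them do not work as stated.

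For Phase~1, the real difficulty is not ``finding a free target'' for the BFS step (that much is easy), but guaranteeing that at the \emph{end} of the phase the set $U = R \cup \{x,y\}$ carries only a bounded constant number of Breaker edges --- this is precisely what Phase~2 needs as input. Your steering heuristic and the configuration-counting argument do not establish this. Once Phase~1 finishes, $x$, $y$ and $R$ are completely determined by the embedding, so there is nothing left to ``delay''; and during Phase~1 Breaker plays adaptively, repeatedly targeting whichever vertices currently look likely to remain in $R$. A static count of how many $(x,y,R,\pi)$-configurations a single Breaker edge touches says nothing about the outcome of such an adaptive exchange. The paper handles this with an explicit potential
\[
\psi \;=\; e_B(U) \;+\; \sum_{w \in f(\mathcal O_{T_1}) \cup f(\mathcal O_{T_2})} \max\{0,\; d_B(w,U) - d_M(w)\},
\]
and proves that Maker can keep $\psi \le \binom{\Delta+1}{2}$ after each of her moves by always extending from a vertex (or to an available vertex) that drops $\psi$. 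This is the ``technical heart'' you allude to but do not supply.

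For Phase~2, you claim Maker finishes the Hamilton $(x,y)$-path on $m_1+1$ vertices in $m_1$ moves, giving $n-1$ total. The paper only obtains $m_1+1$ moves here (a Hamilton path with \emph{both} endpoints prescribed, on a board missing at most $\binom{\Delta+1}{2}+1$ edges), which yields the stated bound of $n$ moves. Getting $m_1$ moves with both endpoints fixed is strictly harder; indeed, the paper's separate Theorem~1.5 achieves $n-1$ only under the additional hypothesis that one endpoint of the bare path is a leaf of $T$, so that only one endpoint of the Phase-2 path is prescribed. Your ``grow fragments from both ends'' sketch is not a proof of either bound.
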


\begin{theorem} \label{th::manyLeaves}
Let $\Delta$ and $m_2$ be positive integers. Then there exists an integer
$n_2 = n_2(\Delta, m_2)$ such that the following holds for every $n \geq n_2$ and for every
tree $T=(V,E)$ such that $|V|=n$ and $\Delta(T) \leq \Delta$. If $T$ does not admit a bare path
of length $m_2$, then Maker has a strategy to win the game $(E(K_n), {\mathcal T}_n)$ within $n+1$ moves.
\end{theorem}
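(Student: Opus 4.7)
My plan is to prove Theorem~\ref{th::manyLeaves} by a two-phase strategy, leveraging the key structural fact that a bounded-degree tree with no long bare path has linearly many leaves. To see this fact, contract each maximal bare path of $T$ to a single edge; the resulting reduced tree $T_{\mathrm{red}}$ has as leaves exactly the leaves of $T$ and as internal vertices the branching vertices of $T$ (those of degree at least $3$). Since $\Delta(T_{\mathrm{red}}) \leq \Delta$, it has at most $L$ internal vertices, where $L$ denotes the number of leaves of $T$, and each of its edges corresponds to a maximal bare path of $T$ of length at most $m_2 - 1$. Hence $n \leq 2 L m_2$, so $L \geq n/(2 m_2) = \Omega(n)$. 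I would then select a set $S = \{\ell_1, \dots, \ell_s\}$ of $s := \lfloor L/\Delta \rfloor = \Theta(n)$ leaves of $T$ whose parents $p_1, \dots, p_s$ are pairwise distinct (possible since each vertex has at most $\Delta$ leaf-neighbors) and set $T^* := T - S$, a tree on $n - s$ vertices of maximum degree at most $\Delta$.

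Maker's $n+1$ moves are split as follows. In Phase~1 (at most $n - s + 1$ moves) she embeds $T^*$ into $K_n$ via a map $\varphi : V(T^*) \to V^*$, where $V^* \subseteq V(K_n)$ has size $n - s$; the remaining $s$ vertices form a reserve $R$ kept untouched. In Phase~2 ($s$ moves) she attaches the leaves of $S$: for $i = 1, \dots, s$ she claims an edge $\varphi(p_i) r_i$ for some unused $r_i \in R$ with $\varphi(p_i) r_i$ unclaimed by Breaker. Such an $r_i$ is guaranteed to exist provided that, at the end of Phase~1, Breaker has claimed only $O(\Delta)$ edges from each $\varphi(p_i)$ into $R$: in round $i$ of Phase~2, Breaker's forbidden set at $\varphi(p_i)$ inside $R$ then has size at most $O(\Delta) + (i-1)$, whereas $|R \setminus \{r_1, \dots, r_{i-1}\}| = s - i + 1$, which is much larger since $s = \Theta(n)$.

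The main obstacle is therefore Phase~1: Maker must embed the bounded-degree tree $T^*$ within $n - s + 1$ moves (only one move ``wasted'' beyond its $n - s - 1$ edges) while \emph{simultaneously} controlling Breaker's degree into $R$ at each of the chosen image vertices $\varphi(p_1), \dots, \varphi(p_s)$. I would prove this as a combined lemma: the large reserve $R$ of $\Theta(n)$ unused vertices gives Maker plenty of maneuvering room to carry out an efficient embedding in the spirit of the strategies of \cite{FHK2012} and of Theorem~\ref{th::longBarePath}; concurrently, Maker adaptively chooses each $\varphi(p_i)$ to be an image vertex at which Breaker has played few edges into $R$ so far, supported by a box-game/pairing argument that bounds how much damage Breaker can inflict on both objectives at once. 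The delicate step is balancing the two sub-strategies so that neither the embedding nor the degree-control is compromised, and showing that at most one Maker move needs to be spent defensively. This is the technical heart of the proof; the abundance of leaves plays here the role that a long bare path plays in Theorem~\ref{th::longBarePath}, providing exactly the flexibility needed to close the argument.
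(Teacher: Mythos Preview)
Your high-level decomposition---remove a linear set $S$ of leaves with pairwise distinct parents, embed $T^* = T - S$ first, then attach the leaves---is exactly the paper's. The problem is that your move budget is allocated backwards, and Phase~2 as written does not work.

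The greedy attachment in Phase~2 does not finish in $s$ moves. Your inequality ``$s - i + 1$ is much larger than $O(\Delta) + (i-1)$'' collapses once $i > s/2$: Breaker can spend every one of his Phase~2 moves claiming an edge from one fixed future parent $\varphi(p_j)$ into the remaining reserve, so that when Maker finally reaches $p_j$ the last reserve vertex may well be blocked. Phase~2 is nothing other than the bipartite perfect-matching game on a graph close to $K_{s,s}$; even on $K_{s,s}$ minus a single perfect matching Maker provably cannot finish in $s+1$ moves (see Remark~\ref{rem::optimal}), and in general she needs $s+2$. The paper isolates exactly this as Lemma~\ref{FastPerMatBP} and spends the two surplus moves there.

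This forces Phase~1 to run with \emph{zero} wasted moves, not two: Maker must embed some $T''$ with $T^* \subseteq T'' \subseteq T$ in exactly $|V(T'')|-1$ moves while simultaneously keeping Breaker's degree bounded by an absolute constant at \emph{every} open and every available vertex (not only at the chosen $\varphi(p_i)$). Your sketch (``box-game/pairing argument'', ``at most one Maker move spent defensively'') cannot meet this budget---there is no room for even a single defensive move. The paper achieves it via a dangerous-vertex mechanism: any available or open vertex whose Breaker-degree reaches a threshold $\Delta^{k+1}$ is immediately absorbed into the embedded tree and closed, with the linearly many unused leaves of $L'$ supplying the flexibility to do so without ever claiming a non-tree edge (Claims~\ref{claim1}--\ref{claim3}). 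That degree control, done with no slack, is what makes the $|A|+2$ bound of Lemma~\ref{FastPerMatBP} applicable and brings the total to $n+1$.
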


Recall that Maker cannot build a copy of the complete binary tree on $n$ vertices in less than $n$ moves. 
One can adapt the argument used to prove this statement to obtain many examples of trees which Maker cannot 
build in $n-1$ moves. Nevertheless, the following theorem suggests that such examples are quite rare.

\begin{theorem} \label{th::randomTree}
Let $T$ be a tree, chosen uniformly at random from the class of all labeled trees on $n$ vertices. Then 
asymptotically almost surely, $T$ is such that Maker has a strategy to win the game $(E(K_n), {\mathcal T}_n)$
in $n-1$ moves.
\end{theorem}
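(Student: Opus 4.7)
The plan is to exploit the rich structure of a uniformly random labeled tree by combining a probabilistic analysis via Prüfer sequences with a refinement of the strategy underlying Theorem~\ref{th::longBarePath}. The guiding observation is that the trees for which the trivial $n-1$ lower bound fails (such as $BT_n$) are highly rigid: removing any edge leaves a unique way to complete a copy. A uniformly random tree, by contrast, enjoys several independent sources of embedding flexibility, and Maker should be able to leverage them to avoid spending the single extra move allowed by Theorem~\ref{WeakGame}.

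The first step is structural. Using the standard bijection between labeled trees on $n$ vertices and Prüfer sequences of length $n-2$, I would show that a uniformly random labeled tree $T$ satisfies, asymptotically almost surely: (i) $\Delta(T) = O(\log n / \log \log n)$; (ii) a linear number of vertices have degree exactly $2$; and most importantly, (iii) $T$ contains two edge-disjoint bare paths $P_1$ and $P_2$, each of length at least $m_1$, where $m_1 = m_1(\Delta)$ is the constant from Theorem~\ref{th::longBarePath} applied with $\Delta$ equal to the max-degree bound of (i). Property (iii) follows from a straightforward first/second-moment computation: the expected number of bare paths of length $m_1$ is linear in $n$, and concentration yields linearly many of them, in particular two that are edge-disjoint.

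The second step is to adapt Maker's strategy so as to save one additional move beyond what Theorem~\ref{th::longBarePath} delivers. Maker uses $P_1$ exactly as in Theorem~\ref{th::longBarePath}, building the rest of $T$ and then threading $P_1$ to obtain an $n$-move win. The saving is engineered through $P_2$: at the point where Theorem~\ref{th::longBarePath}'s strategy would spend an extra move to absorb a final Breaker threat, Maker instead re-routes through the reserve edges of $P_2$, which, by edge-disjointness from $P_1$ and by the random placement of Breaker's moves, have a.a.s.\ remained largely free of Breaker interference. A separate issue is that Theorem~\ref{th::longBarePath} is stated only for bounded-degree trees, whereas $\Delta(T) = \Theta(\log n / \log \log n)$ a.a.s.; I would verify that its proof still goes through under the weaker degree bound $\Delta(T) \leq \log n$, using that the high-degree vertices of a random tree are few and have almost exclusively leaf neighbors, so their local stars can be built greedily.

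The main obstacle is making the reserve-path argument rigorous. One needs to pin down exactly which Breaker threat forces the extra move in the proof of Theorem~\ref{th::longBarePath}, argue that this threat is localised near $P_1$, and then show that the edge-disjoint bare path $P_2$ supplies an alternative completion that Breaker cannot have pre-emptively blocked. This will require a careful tracking of Breaker's dangerous moves throughout the game and a case analysis depending on whether Breaker concentrates his threats on $P_1$, on $P_2$, or on the core of $T$; controlling all three cases simultaneously, while keeping the degree of the random tree from interfering with the buffering strategy, is where I expect the bulk of the technical work to lie.
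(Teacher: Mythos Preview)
Your proposal has a genuine gap: the ``reserve path'' mechanism is not a workable idea, and you are missing the key structural and game-theoretic ingredients that the paper uses.

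First, the structural point. The paper does not merely find two long bare paths; it shows (via a known result on random trees) that a.a.s.\ there are \emph{linearly many} pairwise vertex-disjoint bare paths of some fixed length $k$, each of which has a \emph{leaf of $T$} as one endpoint. The leaf condition is essential and is exactly what your plan overlooks.

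Second, the game-theoretic point. The single wasted move in Theorem~\ref{th::longBarePath} comes from Lemma~\ref{xy-path}: threading a Hamilton path between \emph{two prescribed endpoints} costs $m$ moves for $m-1$ edges. This is not a ``localised threat near $P_1$'' that can be dodged by rerouting; it is an intrinsic cost of fixing both endpoints. Your rerouting idea cannot help, because by the time you are threading $P_1$, the path $P_2$ has already been fully embedded as part of $T\setminus P_1$ and offers no residual flexibility. (Also, the phrase ``random placement of Breaker's moves'' is worrying: Breaker is adversarial.)

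What the paper actually does is replace Lemma~\ref{xy-path} by a new lemma (Lemma~\ref{lem::PathOneEndpoint}) showing that Maker can build a Hamilton path with only \emph{one} prescribed endpoint in exactly $m-1$ moves, on a board missing up to roughly $m/2$ edges. Because each reserved bare path ends in a leaf, only the non-leaf endpoint is constrained after Stage~I, so this lemma applies. Maker removes the interiors of all $\Theta(n)$ such paths to form $T'$, embeds a $T''\supseteq T'$ while controlling Breaker's degrees (an adaptation of the Stage~I strategy from Theorem~\ref{th::manyLeaves}, tuned to the $O(\log n/\log\log n)$ degree bound), then partitions the remaining available vertices together with the open endpoints into $q$ balanced pieces with no Breaker edges inside each piece (via Hajnal--Szemer\'edi and Hall), and finally plays $q$ one-endpoint path games in parallel, each won in exactly $k$ moves. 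The total is $n-1$.

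So the missing ideas are: (a) insist that the bare paths end in leaves; (b) prove and use the one-endpoint path lemma; (c) use linearly many such paths and a board-partition argument to run them in parallel without interference. Your two-path scheme with an unspecified rerouting step does not substitute for any of these.
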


One of the main ingredients in our proof of Theorem~\ref{th::randomTree} is the construction of a Hamilton path 
with one designated endpoint in optimal time (see Lemma~\ref{lem::PathOneEndpoint}). Using this lemma it will be 
easy to obtain the following generalization of Theorem 1.4 from~\cite{HKSS}.

\begin{theorem} \label{th::PathFromLeaf}
Let $\Delta$ be a positive integer. Then there exists an integer $m_3 = m_3(\Delta)$ and an
integer $n_3 = n_3(\Delta, m_3)$ such that the following holds for every $n \geq n_3$ and for every
tree $T=(V,E)$ with $|V|=n$ and $\Delta(T) \leq \Delta$. If $T$ admits a bare path of length $m_3$, 
such that one of its endpoints is a leaf of $T$, then Maker has a strategy to win the game 
$(E(K_n), {\mathcal T}_n)$ in $n-1$ moves.
\end{theorem}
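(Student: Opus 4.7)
The plan is to emulate the proof of Theorem~\ref{th::longBarePath} but substitute its path-construction sub-strategy by the one from Lemma~\ref{lem::PathOneEndpoint}, exploiting the free leaf endpoint of the bare path to save exactly one move. Write the bare path as $P = v, u_1, u_2, \ldots, u_{m_3}$, with $u_{m_3}$ the leaf endpoint and $v$ the attachment point, and set $T' := T - \{u_1, \ldots, u_{m_3}\}$, so that $T'$ is a tree on $n - m_3$ vertices with $\Delta(T') \leq \Delta$ and containing $v$. Since $|E(T')| + |E(P)| = (n - m_3 - 1) + m_3 = n - 1$, Maker's budget allows no wasted moves at all; every edge she claims must belong to the eventual $T$-copy.

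I choose $m_3 = m_3(\Delta)$ large enough that Lemma~\ref{lem::PathOneEndpoint} applies, and $n_3 = n_3(\Delta, m_3)$ large enough. Maker reserves a set $R \subseteq V(K_n)$ of exactly $m_3$ vertices to host the vertices $u_1, \ldots, u_{m_3}$ of $P$, and she fixes an arbitrary vertex $v^* \in V(K_n) \setminus R$ as the image of $v$. She then carries out two sub-strategies in parallel: (a) on the board $V(K_n) \setminus R$ she embeds $T'$ with $v \mapsto v^*$ in $|T'| - 1 = n - m_3 - 1$ moves, using the \emph{tree-off-the-bare-path with one specified attachment} sub-strategy which drives the proof of Theorem~\ref{th::longBarePath}; (b) on the board $R \cup \{v^*\}$, which has $m_3 + 1$ vertices, she invokes Lemma~\ref{lem::PathOneEndpoint} with designated endpoint $v^*$, building $P$ in $m_3$ moves. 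This uses $n - 1$ moves in total. The single move gained over Theorem~\ref{th::longBarePath} comes exactly from the fact that Lemma~\ref{lem::PathOneEndpoint} fixes only one endpoint: the leaf $u_{m_3}$ stays free, so the path construction costs $m_3$ rather than the $m_3 + 1$ moves needed for a Hamilton path with two fixed endpoints (which is what Theorem~\ref{th::longBarePath} must use when neither endpoint of the bare path is a leaf).

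The main technical difficulty is the coordination of (a) and (b) on the two disjoint boards. Since $|R \cup \{v^*\}|$ is a constant while (a) spans $\Theta(n)$ moves, if Maker ran (a) to completion first then Breaker could fill nearly all of the $\binom{m_3+1}{2}$ edges inside $R \cup \{v^*\}$ and foil (b), while running (b) first leaves pre-existing Breaker edges inside the board of (a). My plan is a standard online interleaving: each sub-strategy is implemented in a format that can be paused and resumed at will, Breaker's moves are routed to the sub-strategy whose board they lie in, and at each of her own turns Maker advances whichever sub-strategy is further behind its target schedule. Since $m_3$ is a constant independent of $n$, the small perturbation caused to (a) by being interrupted at most $m_3$ times is absorbable by choosing $n_3$ large, and each sub-strategy completes within its exact allotted budget.
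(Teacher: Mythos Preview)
The proposal has a genuine gap. The sub-strategy (a) you invoke---embedding $T'$ as a \emph{spanning} tree of the fixed board $V(K_n)\setminus R$, with the image of $v$ prescribed, in exactly $|T'|-1$ moves---is not what the proof of Theorem~\ref{th::longBarePath} supplies. That proof (via Theorem~\ref{TreesWithLongPath}) embeds $T'$ on the full $K_n$, \emph{adaptively} deciding which $n-m_3$ vertices of $K_n$ will host $T'$; the $m_3$ leftover vertices are not fixed in advance but emerge from the process. Asking instead for a spanning copy of $T'$ in a prescribed $K_{n-m_3}$ with zero wasted moves is precisely the hard problem the whole paper is about (now for $T'$ in place of $T$), and $T'$ need not satisfy any helpful structural hypothesis such as having a long bare path ending in a leaf. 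Your interleaving scheme also cannot repair this: the phrase ``interrupted at most $m_3$ times is absorbable'' presupposes a strategy for (a) that tolerates extra Breaker moves while still finishing in exactly $|T'|-1$ Maker moves, and no such strategy is available.

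The paper avoids the parallel-play issue entirely by running the two stages \emph{sequentially}. It applies Theorem~\ref{TreesWithLongPath} with $r=1$ on all of $K_n$ to embed $T'$ in $|T'|-1$ moves. The crucial output is Property~(ii) of that theorem: at the end of this stage, Breaker has at most $\binom{\Delta}{2}$ edges inside $U=A\cup\{x_1\}$, where $A$ is the set of available vertices and $x_1$ is the image of the attachment point. This bound holds because the potential function in the proof of Theorem~\ref{TreesWithLongPath} explicitly tracks $e_B(U)$ and Maker's strategy keeps it below $\binom{\Delta+r-1}{2}$. Hence $(K_n\setminus B)[U]$ is missing at most $\binom{\Delta}{2}+1$ edges, and Lemma~\ref{lem::PathOneEndpoint} applies directly to build the Hamilton path on $U$ with designated endpoint $x_1$ in $|U|-1$ moves. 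Your worry that Breaker could ``fill nearly all of the $\binom{m_3+1}{2}$ edges inside $R\cup\{v^*\}$'' during the first phase is exactly the obstacle that Property~(ii) is engineered to prevent; once you use Theorem~\ref{TreesWithLongPath} as intended, no board splitting or interleaving is needed.
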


The rest of this paper is organized as follows: in Subsection~\ref{sec::prelim} we introduce some notation
and terminology that will be used throughout this paper. In Section~\ref{sec::TwoTrees} we prove 
Theorem~\ref{th::longBarePath}, in Section~\ref{sec:manyleaves} we prove Theorem~\ref{th::manyLeaves} and
in Section~\ref{sec::RandomTree} we prove Theorems~\ref{th::randomTree} and~\ref{th::PathFromLeaf}. 
Finally, in Section~\ref{sec::openprob} we present some open problems.

\subsection{Notation and terminology} \label{sec::prelim}
\noindent Our graph-theoretic notation is standard and follows that of~\cite{West}. 
In particular, we use the following.

For a graph $G$, let $V(G)$ and $E(G)$ denote its sets of vertices
and edges respectively, and let $v(G) = |V(G)|$ and $e(G) = |E(G)|$.
For disjoint sets $A,B \subseteq V(G)$, let $E_G(A,B)$ denote the
set of edges of $G$ with one endpoint in $A$ and one endpoint in
$B$, and let $e_G(A,B) = |E_G(A,B)|$. For a set $S \subseteq V(G)$,
let $G[S]$ denote the subgraph of $G$ which is induced on the set
$S$. For disjoint sets $S,T \subseteq V(G)$, let $N_G(S,T) = \{u 
\in T : \exists v \in S, uv \in E(G)\}$ denote the set of
neighbors of the vertices of $S$ in $T$.
For a set $T \subseteq V(G)$ and a vertex $w \in V(G) \setminus T$ 
we abbreviate $N_G(\{w\}, T)$ to $N_G(w, T)$, and let $d_G(w,T)
= |N_G(w,T)|$ denote the degree of $w$ into $T$. For a set $S \subseteq V(G)$ 
and a vertex $w \in V(G)$ we abbreviate $N_G(S, V(G) \setminus S)$ to $N_G(S)$
and $N_G(w, V(G) \setminus \{w\})$ to $N_G(w)$. We let $d_G(w)
= |N_G(w)|$ denote the degree of $w$ in $G$. The minimum and maximum
degrees of a graph $G$ are denoted by $\delta(G)$ and $\Delta(G)$
respectively. Often, when there is no 
risk of confusion, we omit the subscript $G$ from the notation above.
Let $P = (v_1 v_2 \ldots v_k)$ be a path in a graph $G$. The vertices $v_1$ 
and $v_k$ are called the \emph{endpoints} of $P$, whereas the vertices of 
$V(P) \setminus \{v_1, v_k\}$ are called the \emph{interior vertices} of $P$. 
We denote the set of endpoints of a path $P$ by $End(P)$. Note that $|End(P)| 
= \min \{2, v(P)\}$. The \emph{length} of a path is the number of its edges. 
A path of a tree $T$ is called a \emph{bare path} if all 
of its interior vertices are of degree 2 in $T$. Given two graphs $G$ and $H$ 
on the same set of vertices $V$, let $G \setminus H$ denote the graph with 
vertex set $V$ and edge set $E(G) \setminus E(H)$.

Let $G$ be a graph, let $T$ be a tree, and let $S \subseteq V(T)$ be
an arbitrary set. An \emph{$S$-partial embedding} of $T$ in $G$ is
an injective mapping $f : S \rightarrow V(G)$, such that $f(x)f(y)
\in E(G)$ whenever $x,y \in S$ and $xy \in E(T)$. For a vertex 
$v \in f(S)$ let $v' = f^{-1}(v)$ denote its pre-image under $f$. 
If $S = V(T)$, we call an $S$-partial embedding of $T$ in $G$ simply an embedding of
$T$ in $G$. We say that the vertices of $S$ are \emph{embedded},
whereas the vertices of $V(T) \setminus S$ are called \emph{new}. An
embedded vertex is called \emph{closed} with respect to $T$ and $f$ if all
its neighbors in $T$ are embedded as well. An embedded vertex, that
is not closed with respect to $T$ and $f$, is called \emph{open} with
respect to $T$ and $f$. The vertices of $f(S)$ are called \emph{taken},
whereas the vertices of $V(G) \setminus f(S)$ are called
\emph{available}. With some abuse of this terminology, for a closed
(respectively open) vertex $u' \in S$, we sometimes refer to
$f(u')$ as being closed (respectively open) as well. Moreover,
we omit the phrase ``with respect to $T$ and $f$'' 
or abbreviate it to ``with respect to $T$'', if its meaning is 
clear from the context. In particular we denote the set of open 
vertices with respect to $T$ and $f$ by ${\mathcal O}_T$. 

Assume that some Maker-Breaker game, played on the edge set of some
graph $G$, is in progress. At any given moment during this game, we
denote the graph spanned by Maker's edges by $M$ and the graph
spanned by Breaker's edges by $B$; the edges of $G \setminus (M \cup B)$ 
are called \emph{free}.

\section{Trees which admit a long bare path} \label{sec::TwoTrees}

In this section we will prove Theorem~\ref{th::longBarePath}. The main idea is to first embed 
the tree $T$ except for a sufficiently long bare path $P$ and then to embed $P$ between its previously 
embedded endpoints. In the first stage we will waste no moves, whereas in the second we will waste at most 
one. Starting with the former we prove the following result. 

\begin{theorem} \label{TreesWithLongPath}
Let $r$ be a positive integer and let $n, m$ and $\D \geq 3$ be integers satisfying $n > m \geq (\D+r)^2$. 
For every $1 \leq i \leq r$, let $T_i = (V_i, E_i)$ be a tree with maximum degree at most $\D$ and
assume that $\sum_{i=1}^r |V_i| = n - m$. For every $1 \leq i \leq r$ let $x'_i \in V_i$ be an 
arbitrary vertex. Then, playing a Maker-Breaker game 
on the edge set of $K_n$, Maker has a strategy to ensure that the following two properties will hold 
immediately after her $(\sum_{i=1}^r |V_i| - r)$th move: 
\begin{description}
\item [(i)] $M \cong \bigcup_{i=1}^r T_i$, that is, Maker's graph is a vertex disjoint union of the $T_i$'s. 
\item [(ii)] There exists an isomorphism $f : \bigcup_{i=1}^r T_i \rightarrow M$ for which $e_B(A \cup f(\{x'_1, \ldots, x'_r\})) \leq \binom{\D+r-1}{2}$, where $A = V(K_n) \setminus f(\bigcup_{i=1}^r V_i)$ is the set of available vertices. 
\end{description}
\end{theorem}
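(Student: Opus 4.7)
Set $X = \{x'_1, \ldots, x'_r\}$ and $N = \sum_{i=1}^r |V_i| - r$. The plan is for Maker to process the trees $T_1, \ldots, T_r$ consecutively, using exactly $|V_i|-1$ moves per tree and adding one new vertex to the partial embedding per move; since this accounts for exactly $N$ moves, no move is wasted and property (i) holds automatically. Within each $T_i$, she fixes an enumeration $u^i_1 = x'_i, u^i_2, \ldots, u^i_{|V_i|}$ respecting the parent-before-child rule, and her $j$th move on $T_i$ claims a free edge from the image of the parent of $u^i_{j+1}$ to an available vertex which she sets as $f(u^i_{j+1})$. The first move devoted to $T_i$ is special in that it commits both $f(x'_i)$ and $f(u^i_2)$ at once.

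Property (ii) will be secured by a \emph{cleanup} principle. Throughout the game, let $D$ be the set of currently available vertices carrying at least one Breaker-edge into $A \cup f(X)$; every Breaker-edge inside the final $A \cup f(X)$ has both endpoints in $D \cup f(X)$ at the end of the game, so it suffices to show $|D \cup f(X)| \leq \D + r - 1$. Maker uses two rules. At the first move of each new $T_i$, she commits $f(x'_i)$ to an available vertex chosen to minimize its Breaker-degree into $f(\{x'_{i'} \colon i' < i\})$ and into the currently available set; the assumption $m \geq (\D + r)^2$, combined with the fact that Breaker has claimed at most $n$ edges by the end of the game, yields by pigeonhole an available vertex whose these Breaker-degrees are both bounded by a constant depending only on $\D$ and $r$. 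On every other move, Maker prefers a cleanup extension: among all free edges from an open embedded vertex of $T_i$ to an available target, she plays one whose target lies in $D$ whenever possible, thereby embedding that vertex of $D$ and simultaneously removing all of its Breaker-edges from $e_B(A \cup f(X))$. Otherwise she plays an arbitrary free extension consistent with her enumeration.

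The crux is bounding $|D|$ at the end. Each Breaker move adds at most two vertices to $D$ and each successful cleanup removes one. A cleanup fails only in two scenarios: (a) Maker is playing one of the $r$ opening moves (committing some $f(x'_i)$), or (b) Breaker has preemptively claimed every edge from every currently open vertex to every vertex of $D$. In scenario (b), a budget argument using the maximum-degree bound $\D$ on each $T_i$ and the quadratic slack $m \geq (\D + r)^2$ forces $|D|$ to remain bounded by a function of $\D$ and $r$. A careful amortization then yields $|D| \leq \D - 1$ at the final step, and hence
$$e_B(A \cup f(X)) \leq \binom{|D \cup f(X)|}{2} \leq \binom{\D + r - 1}{2}$$
as required.

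The main obstacle will be carrying out this amortization with the correct constants. In particular, one must carefully choose how Maker reorders the embedding within each $T_i$ on the fly, respecting only the parent-before-child constraint, so that the number of instances of scenario (b) is minimized; and one must tightly control how the opening moves of successive trees interact with the ongoing cleanup process. The assumption $m \geq (\D+r)^2$ feeds into the proof at multiple places: when picking $f(x'_i)$, when verifying that some legal extension edge always exists, and when bounding scenario (b).
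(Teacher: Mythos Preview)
Your proposal outlines a plausible-sounding strategy, but it has a genuine gap at the key step and one concrete error.

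The error is the pigeonhole claim for the opening move of each $T_i$. You assert that because $m \ge (\Delta+r)^2$ and Breaker has claimed at most $n$ edges, some available vertex has Breaker-degree into $f(\{x'_{i'}:i'<i\}) \cup A$ bounded by a constant depending only on $\Delta,r$. But pigeonhole only gives average degree $O(n/m)$ over the $\ge m$ available vertices, and $n/m$ is not bounded in terms of $\Delta,r$. So when you commit $f(x'_i)$ late (after $T_1,\ldots,T_{i-1}$ are fully embedded), you cannot guarantee that this root sees only $O_{\Delta,r}(1)$ Breaker-edges into the relevant set. Every such edge permanently puts its other endpoint into $D$, so this alone could blow up $|D|$ beyond $\Delta-1$.

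The gap is that the crucial amortization showing $|D|\le \Delta-1$ at the end is simply asserted. You correctly identify scenario~(b) (Breaker blocks all edges from open vertices to $D$) as the obstacle, but ``a budget argument'' and ``a careful amortization'' are not arguments. In fact, your cleanup rule only tracks Breaker's edges \emph{inside} $A\cup f(X)$; it does not track how badly the currently open vertices are being hemmed in by Breaker. The paper handles exactly this issue by a potential function
\[
\psi = e_B(U) + \sum_i \sum_{w\in f(\mathcal{O}_{T_i})} \max\{0,\, d_B(w,U)-d_M(w)\},
\]
which couples the edge count you care about with a blockage measure at every open vertex. Maker first embeds and \emph{closes} all roots $x'_1,\ldots,x'_r$ in her first $\sum_i d_{T_i}(x'_i)$ moves (so the roots never accumulate Breaker-degree afterwards), and then grows all trees in parallel via a three-case rule that provably keeps $\psi\le\binom{\Delta+r-1}{2}$ after each of her moves. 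Since $e_B(U)\le\psi$, property~(ii) follows directly. Both the early commitment and immediate closing of all roots, and the explicit open-vertex term in $\psi$, are doing work that your sequential cleanup scheme does not obviously replicate.
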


\begin{remark} \label{rem::generalr}
In the proof of Theorem~\ref{th::longBarePath} we will use the special case $r=2$ of Theorem~\ref{TreesWithLongPath}. Another special case, namely $r=1$, will be used in the proof of Theorem~\ref{th::PathFromLeaf}. It is therefore convenient to prove it here for every $r$. Moreover, it might have future applications where other values of $r$ are considered.   
\end{remark}

\textbf{Proof of Theorem~\ref{TreesWithLongPath}}
We begin by describing Maker's strategy. At any point during the game, if Maker is unable to follow the proposed strategy, 
then she forfeits the game. We will prove that Maker can follow this strategy without forfeiting the game and that, 
by doing so, she wins the game.

\textbf{Maker's strategy:} Throughout the game, Maker maintains a set $S \subseteq \bigcup_{i=1}^r V_i$ 
of embedded vertices, an $S$-partial embedding $f$ of $\bigcup_{i=1}^r T_i$ in $K_n \setminus B$ and
a set $A = V(K_n) \setminus f(S)$ such that $e_B(A \cup f(\{x'_1, \ldots, x'_r\})) \leq \binom{\D+r-1}{2}$. 
Initially $S = \{x'_1, \ldots, x'_r\}$, $f(x'_i) = x_i$ for every $1 \leq i \leq r$ where 
$x_1, \ldots, x_r \in V(K_n)$ are $r$ arbitrary vertices, and $A = V(K_n) \setminus \{x_1, \ldots, x_r\}$. 
At any point during the game we denote the set $A \cup \{x_1, \ldots, x_r\}$ by $U$.

Maker's strategy is based on the following potential function: for every
vertex $u \in V(K_n)$ let $\phi(u) = \max \{0, d_B(u,U) - d_M(u)\}$ 
and let 
$$
\psi = e_B(U) + \sum_{i=1}^r \sum_{w \in f({\mathcal O}_{T_i})} \phi(w)
$$ 
(by abuse of notation we use $\psi$ to denote the potential at any point during the game).
 
For every $1 \leq i \leq r$ let $d_i = d_{T_i}(x'_i)$. In her first $\sum_{i=1}^r d_i$ moves, 
Maker closes $x'_1, \ldots, x'_r$, that is, for every $1 \leq i \leq r$ and every $1 \leq j \leq d_i$ 
she claims a free edge $x_i y_{ij}$ where the elements of $\{y_{ij} : 1 \leq i \leq r, 1 \leq j \leq d_i\}$ 
are $\sum_{i=1}^r d_i$ arbitrary vertices of $A$. She then updates $A, U, S$ and $f$ as follows. 
For every $1 \leq i \leq r$ let $y'_{i1}, \ldots, y'_{id_i}$ be the neighbours of $x'_i$ in $T_i$. 
Maker deletes the elements of $\{y_{ij} : 1 \leq i \leq r, 1 \leq j \leq d_i\}$ from $A$ (and then 
also from $U$), adds the elements of $\{y'_{ij} : 1 \leq i \leq r, 1 \leq j \leq d_i\}$ to $S$
and sets $f(y'_{ij}) = y_{ij}$ for every $1 \leq i \leq r$ and every $1 \leq j \leq d_i$.  

For every integer $\ell > \sum_{i=1}^r d_i$, Maker plays her $\ell$th move according 
to the value of $\psi$ at that time. She distinguishes between the following three cases.

\begin{description} 
\item [Case 1: $\psi \leq \binom{\D+r-1}{2}$.] 
Maker claims a free edge $vz$ such that the following properties hold:
\begin{enumerate}
\item $v \in \bigcup_{i=1}^r f({\mathcal O}_{T_i})$;
\item $z \in A$. 
\end{enumerate}
Subsequently, Maker updates $A, U, S$ and $f$ by deleting $z$ from $A$ (and then also from $U$), adding $z'$ to $S$
and setting $f(z') = z$, where $z'$ is an arbitrary new neighbor of $f^{-1}(v)$ in $\bigcup_{i=1}^r T_i$.

\item [Case 2: $\psi > \max \left\{\binom{\D+r-1}{2}, e_B(U) \right\}$.] 
Maker claims a free edge $vz$ such that the following properties hold:
\begin{enumerate}
\item $v \in \bigcup_{i=1}^r f({\mathcal O}_{T_i})$;
\item $d_B(v,U) > d_M(v)$;
\item $z \in A$. 
\end{enumerate}
Subsequently, Maker updates $A, U, S$ and $f$ as in Case 1.

\item [Case 3: $\psi = e_B(U) > \binom{\D+r-1}{2}$.] 
Maker claims a free edge $vz$ such that the following properties hold:
\begin{enumerate}
\item $v \in \bigcup_{i=1}^r f({\mathcal O}_{T_i})$;
\item $z \in A$. 
\item $d_B(z,U) > 0$.
\end{enumerate}
Subsequently, Maker updates $A, U, S$ and $f$ as in Case 1.
\end{description}

We wish to prove that Maker can follow the proposed strategy without forfeiting the game. Note first that 
$\psi \geq e_B(U)$ holds by definition and thus Maker will never face a situation which is not covered by   
Cases 1,2 and 3 above. Next, we prove the following claims.

\begin{claim} \label{cl::Maker}
For every $\sum_{i=1}^r d_i < \ell \leq \sum_{i=1}^r |V_i| - r$, Maker does not increase $\psi$
in her $\ell$th move.
\end{claim}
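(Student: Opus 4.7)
The plan is to track how Maker's $\ell$-th move changes each summand in
$$
\psi = e_B(U) + \sum_{i=1}^r \sum_{w \in f(\mathcal{O}_{T_i})} \phi(w),
$$
and show that each change is non-positive. Let $\psi$ and $\psi'$ denote the potential immediately before and after Maker's $\ell$-th move, in which she claims an edge $vz$ and embeds a new neighbour $z'$ of $f^{-1}(v)$ by setting $f(z') = z$. Write $U' = U \setminus \{z\}$ for the updated set $U$. Her move affects $\psi$ through four mechanisms: the drop in $e_B(U)$, the (possible) insertion of $z'$ into the open set, the change at $v$, and the changes at the remaining open vertices. I would bound each contribution in turn.

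The first two contributions are naturally paired. Removing $z$ from $U$ decreases $e_B(U)$ by exactly $d_B(z, U')$. Since $d_M(z) = 1$ immediately after the move, the contribution $z'$ can make to $\sum \phi$ (in case $z'$ is open in the updated embedding; otherwise $0$) is at most $\max\{0, d_B(z, U') - 1\}$. Together these give
$$
-d_B(z, U') + \max\{0, d_B(z, U') - 1\} \;\leq\; 0.
$$
For the vertex $v$, note that $vz \notin B$ (Maker just claimed it), so $d_B(v, U') = d_B(v, U)$, while $d_M(v)$ grew by $1$; hence $\phi(v)$ either decreases by $1$ (if $v$ stays open) or is removed entirely from the sum (if the embedding of $z'$ closed $v$ because $z'$ was its last unembedded neighbour), and both possibilities yield a non-positive change. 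Finally, for any other open vertex $w \ne v, z'$, $d_M(w)$ is unchanged while $d_B(w, U') \leq d_B(w, U)$ (decreasing by $1$ if $wz \in B$, otherwise unchanged), so $\phi(w)$ cannot increase. Summing the four contributions yields $\psi' \leq \psi$.

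The estimate is case-independent: it works simultaneously in Cases~1, 2 and 3 of Maker's strategy, and I expect the only real obstacle to be bookkeeping, in particular cleanly handling the sub-cases where $z'$ is a leaf of $T_i$ (and hence never enters $f(\mathcal{O}_{T_i})$) versus where it carries further unembedded neighbours, and where $v$ does or does not become closed. As a by-product the same argument refines to a strict decrease of $\psi$ in the two remaining cases: Case~2 forces $\phi(v) \geq 1$ before the move, so the change at $v$ is $\leq -1$, while Case~3 forces $d_B(z, U') \geq 1$, so the paired term $-d_B(z, U') + \max\{0, d_B(z, U') - 1\}$ equals $-1$. This strengthening is not needed for the present claim, but will presumably be invoked afterwards to keep $\psi$ globally bounded against Breaker's moves.
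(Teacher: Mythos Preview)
Your proof is correct and follows essentially the same approach as the paper: track the change in $e_B(U)$ and in each summand $\phi(w)$ separately, pairing the drop in $e_B(U)$ against the new contribution $\phi(z)$. One minor imprecision: $\phi(v)$ need not drop by exactly $1$ when $v$ stays open (it remains $0$ if it was already $0$), but this does not affect your conclusion that the change at $v$ is non-positive.
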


\begin{proof}
For every $\sum_{i=1}^r d_i < \ell \leq \sum_{i=1}^r |V_i| - r$, in her $\ell$th move Maker
claims an edge $vz$ such that $v \in \bigcup_{i=1}^r f({\mathcal O}_{T_i})$ and $z \in A$. Clearly,
this does not affect $\phi(u)$ for any $u \in V(K_n) \setminus \{v,z\}$. Moreover, $\phi(v)$ is not
increased, $e_B(U)$ is decreased by $d_B(z,U)$ and $\sum_{i=1}^r \sum_{w \in f({\mathcal O}_{T_i})} \phi(w)$ 
is increased by at most $\phi(z) \leq d_B(z,U)$.   
\end{proof}

\begin{claim} \label{obs_p}
$\psi \leq \binom{\D+r-1}{2}$ holds immediately after Maker's $\ell$th move for every 
$\sum_{i=1}^r d_i \leq \ell \leq \sum_{i=1}^r |V_i| - r$.
\end{claim}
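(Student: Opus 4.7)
The plan is to prove the bound by induction on $\ell$. The base case is $\ell = \sum_{i=1}^r d_i$, corresponding to the moment Maker finishes her initial phase, and the inductive step shows that if the bound holds immediately after Maker's $(\ell-1)$-th move, it still holds after her $\ell$-th move.

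For the base case, observe that since Breaker goes first, Breaker has played exactly $\sum_{i=1}^r d_i \leq r\D$ edges after Maker's $(\sum_{i=1}^r d_i)$-th move. Let $Y = f(\{y'_{ij}\})$ denote the set of vertices embedded in the initial phase other than the $x_i$, so that $U = V(K_n) \setminus Y$. Every Breaker edge then falls into one of three classes: both endpoints in $U$ (contributing to $e_B(U)$), one endpoint in $U$ and one in $Y$ (contributing to $d_B(y,U)$ for a unique $y \in Y$), or both endpoints in $Y$ (contributing to neither term of $\psi$). Since $\phi(w) \leq d_B(w,U)$ for every embedded open $w$, this gives $\psi \leq e_B(U) + \sum_{y \in Y} d_B(y,U) \leq e(B) \leq r\D$. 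A short calculation (equivalent to $\D^2 + r^2 - 3\D - 3r + 2 \geq 0$) shows $r\D \leq \binom{\D+r-1}{2}$ for all $r \geq 1$ and $\D \geq 3$, which settles the base case.

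For the inductive step, note that between Maker's $(\ell-1)$-th and $\ell$-th moves Breaker plays a single edge, which can raise $\psi$ by at most $1$: if the edge lies in $U$ it raises $e_B(U)$ by $1$; if it joins some open embedded $w$ to a vertex of $U$ it raises $\phi(w)$ by at most $1$; all other edges leave $\psi$ unchanged. So before Maker's $\ell$-th move we have $\psi \leq \binom{\D+r-1}{2} + 1$. In Case~1 we already have $\psi \leq \binom{\D+r-1}{2}$ and Claim~\ref{cl::Maker} guarantees that Maker's move does not increase $\psi$, so we are done. In Cases~2 and~3 it suffices to show that Maker's move strictly decreases $\psi$, bringing it back to the required bound. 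Writing $vz$ for the edge Maker claims, with $v$ open embedded and $z \in A$, the update produces three effects: removing $z$ from $U$ decreases $e_B(U)$ by exactly $d_B(z,U)$; the new Maker edge at $v$ either removes $\phi(v)$ from the sum (if $v$ becomes closed) or decreases it by $1$ (if $v$ stays open and had $\phi(v) \geq 1$, which is guaranteed in Case~2); and if $z'$ is open, $z$ enters the sum with $\phi(z) \leq \max\{0, d_B(z,U) - 1\}$, because $d_M(z) \geq 1$ after the move. Adding these contributions yields a net change of at most $-1$ in Case~2, and the same holds in Case~3 using $\sum_w \phi(w) = 0$ together with the choice $d_B(z,U) \geq 1$.

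The main obstacle is the careful bookkeeping in the inductive step: the crucial one-unit saving in Cases~2 and~3 comes from the fact that the $-d_B(z,U)$ contributed by removing $z$ from $U$ is almost exactly cancelled by the contribution of $z$ as a new open vertex, with the slack of one unit provided by the Maker edge $vz$ which ensures $d_M(z) \geq 1$. Once this cancellation is spelled out precisely, the three cases combine to give the invariant, and the feasibility of Maker's moves (i.e.\ that free edges of the required types always exist) can be dealt with separately.
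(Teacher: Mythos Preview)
Your proof is correct and follows essentially the same approach as the paper: induction on $\ell$, with the base case handled by observing that each of Breaker's at most $r\Delta$ edges contributes at most $1$ to $\psi$ (since the $x_i$ are closed and hence no open vertex lies in $U$), and the inductive step split into Case~1 (use Claim~\ref{cl::Maker}) versus Cases~2 and~3 (show Maker's move strictly decreases $\psi$). Your bookkeeping for Cases~2 and~3 is slightly more explicit than the paper's but amounts to the same computation; the only thing you leave implicit is the fact that open embedded vertices are disjoint from $U$ after the initial phase, which is what makes your trichotomy for Breaker's edge exhaustive and non-overlapping.
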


\begin{proof}
We prove this by induction on the number of Maker's moves. Since $\left(\bigcup_{i=1}^r f({\mathcal O}_{T_i})\right) \cap \{x_1, \ldots, x_r\} = \emptyset$ holds after Maker's $(\sum_{i=1}^r d_i)$th move, it follows that, from this point onwards, every edge $e \in E(B)$ contributes at most $1$ to $\psi$. Since $\D \geq 3$ it thus follows that $\psi \leq \sum_{i=1}^r d_i \leq r\D \leq \binom{\D+r-1}{2}$ holds immediately after Maker's $(\sum_{i=1}^r d_i)$th move. Assume that $\psi \leq \binom{\D+r-1}{2}$ holds 
immediately after her $\ell$th move for some $\sum_{i=1}^r d_i \leq \ell < \sum_{i=1}^r |V_i| - r$; we will 
show that, unless Maker forfeits the game, this inequality holds immediately after her $(\ell+1)$st move as well. 
Since, $x'_1, \ldots, x'_r$ are closed, from now on Breaker can increase $\psi$ by at most $1$ per move. 
It thus follows by the induction hypothesis that $\psi \leq \binom{\D+r-1}{2} + 1$ holds immediately before Maker's
$(\ell+1)$st move. Assume first that in fact $\psi \leq \binom{\D+r-1}{2}$. It follows by Claim~\ref{cl::Maker} that
$\psi \leq \binom{\D+r-1}{2}$ holds immediately after Maker's $(\ell+1)$st move as well. Assume then that 
$\psi = \binom{\D+r-1}{2} + 1$; it suffices to prove that Maker decreases $\psi$ by at least $1$ in her $(\ell+1)$st move. 
Maker plays according to the proposed strategy, either for Case 2 or for Case 3. In Case 2 $\psi$ is not increased 
since the value of $\sum_{i=1}^r \sum_{w \in f({\mathcal O}_{T_i})} \phi(w)$ is increased by at most $d_B(z,U)$ 
and the value of $e_B(U)$ is decreased by the same amount. Moreover, since $d_B(v,U) > d_M(v)$, it follows that $\phi(v)$ 
is decreased by at least $1$. Since $v \in \bigcup_{i=1}^r f({\mathcal O}_{T_i})$ holds before Maker's $(\ell+1)$st move, 
we conclude that $\psi$ is decreased by at least $1$. In Case 3 Maker decreases $e_B(U)$ by $d_B(z,U)$. Moreover, if $z$ 
becomes closed, then $\sum_{i=1}^r \sum_{w \in f({\mathcal O}_{T_i})} \phi(w)$ is not increased, whereas, if $z$ 
becomes open, then since $d_B(z,U) > 0$, it is increased by $d_B(z,U) - d_M(z) = d_B(z,U) - 1$. 
Either way, $\psi$ is decreased by at least $1$.      
\end{proof}

We can now prove that Maker is indeed able to play according to the proposed strategy.

\begin{claim} \label{claim_p} 
Maker can follow the proposed strategy without forfeiting the game for $\sum_{i=1}^r |V_i| - r$ moves.
\end{claim}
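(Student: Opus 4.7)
The plan is to verify, move by move, that Maker can always follow the prescribed strategy; the argument naturally splits along the two phases. For the \emph{closing phase} (Maker's first $\sum_{i=1}^r d_i$ moves), the check is purely a count: before her $j$th move, at most $r\Delta - 1$ vertices of $V(K_n) \setminus \{x_1, \ldots, x_r\}$ have already been used as some $y_{i'j'}$ and Breaker has claimed at most $r\Delta$ edges, so a suitable new endpoint $y_{ij}$ joined to the target $x_i$ by a free edge is always available provided $n$ is large enough.

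For the \emph{main phase} I would induct on $\ell$, feeding on Claim~\ref{obs_p}: it asserts $\psi \leq \binom{\Delta + r - 1}{2}$ immediately after each of Maker's previous moves in this phase, and since each $x_i$ is closed from the end of the closing phase onwards, Breaker can raise $\psi$ by at most $1$ per reply. Consequently $\psi \leq \binom{\Delta + r - 1}{2} + 1$ just before every main-phase Maker move, placing us in one of Cases~1,~2,~3. I would also rely on two auxiliary facts that hold throughout: at least one vertex is open, since $\bigcup_{i=1}^r T_i$ consists of connected components and is not yet fully embedded, and $|A| = n - \ell - r \geq m \geq (\Delta + r)^2$.

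Cases~1 and~2 are routine counting: for any open $v$, $d_B(v, U) \leq d_M(v) + \phi(v) \leq \Delta + \binom{\Delta + r - 1}{2} + 1$, so the number of free edges from $v$ into $A$ is at least $|A| - d_M(v) - d_B(v, A)$, which is positive under the assumed bound on $m$; in Case~2 we additionally use $\psi > e_B(U)$ to exhibit an open $w$ with $\phi(w) > 0$, supplying a valid $v$.

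The genuine obstacle is Case~3, which demands both an available $z$ with $d_B(z, U) > 0$ and an open $v$ for which $vz$ is free. For $z$, I would observe that $e_B(U) > \binom{\Delta + r - 1}{2}$ forces the set of vertices incident to edges of $B$ lying in $U$ to have size at least $\Delta + r$, since any $\Delta + r - 1$ vertices span at most $\binom{\Delta + r - 1}{2}$ edges; subtracting the at most $r$ vertices $\{x_1, \ldots, x_r\}$ leaves at least $\Delta$ candidates in $A$. For $v$, the identity $\psi = e_B(U)$ gives $\phi(v) = 0$ for every open $v$, so $d_B(v, U) \leq d_M(v) \leq \Delta - 1$ (the final inequality using that $v$ being open means some $T$-neighbour of $v'$ is still unembedded). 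Since $\Delta - 1$ is strictly less than the number of candidate $z$'s, at least one candidate $z$ satisfies $vz \notin B$, and $vz \notin M$ follows from $z$ being unembedded, producing the required free edge.
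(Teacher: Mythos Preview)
Your proof is correct and follows essentially the same approach as the paper: both use the bound $|A|\geq(\Delta+r)^2$ together with Claim~\ref{obs_p} to control $\psi$ before each move, then handle the three cases by the same degree-counting arguments (in particular, the Case~3 argument via the pigeonhole bound on vertices incident to $E(B[U])$ and the observation $\phi(v)=0\Rightarrow d_B(v,U)\leq d_M(v)<\Delta$ is identical). Your only cosmetic deviations are treating Cases~1 and~2 with a single unified bound and being slightly more explicit about the closing phase; neither changes the substance.
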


\begin{proof}
Since Maker aims to build a copy of $\bigcup_{i=1}^r T_i$ within $\sum_{i=1}^r |V_i| - r$ moves and since 
$\sum_{i=1}^r |V_i| = n - m \leq n - (\D+r)^2$, it follows that $|A| \geq (\D+r)^2$ holds at any point during these
$\sum_{i=1}^r |V_i| - r$ moves; in particular Maker can follow the first $\sum_{i=1}^r d_i$ moves of the proposed strategy. 
As previously noted, once $x'_1, \ldots, x'_r$ are closed, 
Breaker can increase $\psi$ by at most $1$ per move. It thus follows by Claim~\ref{obs_p} that 
$\psi \leq \binom{\D+r-1}{2} + 1$ holds at any point during the remainder of the game. Assume first that 
$\psi \leq \binom{\D+r-1}{2}$. Let $v \in \bigcup_{i=1}^r f({\mathcal O}_{T_i})$, then 
$\phi(v) \leq \psi \leq \binom{\D+r-1}{2}$ and thus $d_B(v,U) \leq \phi(v) + d_M(v) \leq \binom{\D+r-1}{2}
+ \D < (\D + r)^2 \leq |A|$. Hence there exists a free edge $vz$ such that $z \in A$. We conclude that Maker 
can follow her strategy for Case 1.

Assume then that $\psi = \binom{\D+r-1}{2} + 1$. Assume further that $\psi > e_B(U)$. It follows
that there exists a vertex $v \in \bigcup_{i=1}^r f({\mathcal O}_{T_i})$ 
such that $\phi(v) > 0$ and thus $d_B(v,U) > d_M(v)$. The same calculation as above shows that
$d_B(v,U) < |A|$. Therefore, Maker can claim a free edge $vz$ as required by her strategy for Case 2.

Assume then that $e_B(U) = \psi = \binom{\D+r-1}{2} + 1$. It follows that there are at least $\D+r$ vertices 
$z \in U$ for which $d_B(z,U) > 0$; by the definition of $A$ and $U$, at least $\D$ of them must be in $A$. Let $v \in \bigcup_{i=1}^r f({\mathcal O}_{T_i})$. Since $\psi = e_B(U)$, it follows that $\phi(v) = 0$ and thus $d_B(v,U) \leq d_M(v) < \D$ (the last inequality 
holds since $v$ is open). Therefore, Maker can claim a free edge $vz$ as required by her strategy for Case 3. 
\end{proof}

Since Maker follows the proposed strategy, it is evident that after $\sum_{i=1}^r |V_i| - r$ 
moves she builds a graph which is isomorphic to $\bigcup_{i=1}^r T_i$. Moreover, since $\phi(w) \geq 0$ 
for every vertex $w$, it follows by Claim~\ref{obs_p} that $e_B(U) \leq \psi \leq \binom{\D+r-1}{2}$
holds, in particular, immediately after Maker's $(\sum_{i=1}^r |V_i| - r)$th move. We conclude
that Maker can indeed ensure that Properties (i) and (ii) will hold immediately after her
$(\sum_{i=1}^r |V_i| - r)$th move. 
{\hfill $\Box$ \medskip\\}

\bigskip

Next, we wish to embed a Hamilton path whose endpoints were previously embedded, into
an almost complete graph. Formally, we need the following result. 

\begin{lemma}\label{xy-path}
For every positive integer $k$ there exists an integer $m_0 = m_0(k)$
such that the following holds for every $m \geq m_0$. Let $G$ be a graph
with $m$ vertices and $e(G) \geq \binom{m}{2} - k$ edges and let $x$ and $y$ be
two arbitrary vertices of $G$. Then, playing a Maker-Breaker game on $E(G)$, Maker
has a strategy to build a Hamilton path of $G$ between
$x$ and $y$ within $m$ moves. 
\end{lemma}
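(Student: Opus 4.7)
The plan is to maintain a cover of $V(G)$ by vertex-disjoint paths, with $x$ and $y$ always kept as endpoints, and to reduce the number of components to one within the $m$-move budget. The strategy divides into two phases, in the spirit of Theorem~\ref{TreesWithLongPath}.

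In Phase~1, Maker plays roughly $m - t$ moves (for some $t = t(k) = O(1)$) to build a graph $M$ spanning $V(G)$ which is a vertex-disjoint union of paths, with $x$ and $y$ among the endpoints. Each move extends some current path by appending a free edge from one of its endpoints to a still unused vertex. To control how many edges Breaker accumulates among the current endpoints, Maker follows a potential function of the form used in Theorem~\ref{TreesWithLongPath}, kept bounded by a constant depending only on $k$. At the end of this phase, $M$ decomposes into at most $t$ paths.

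In Phase~2, Maker uses her remaining moves to merge the $t$ paths into a single $xy$-Hamilton path. At each step, by the Phase~1 invariant on the set $U$ of current path-endpoints and by the hypothesis $e(G) \geq \binom{m}{2} - k$, there is a free edge joining one path's endpoint to another's (with $x$ and $y$ never turned into internal vertices). Claiming such an edge reduces the number of components by one, so that $t - 1$ such merges collapse the cover into a single Hamilton $xy$-path. The extra single move of slack, relative to the minimum of $m - 1$, absorbs at most one ``wasted'' move forced on Maker during the two phases, for instance an extension in Phase~1 which has to start a new component rather than grow an existing one because Breaker has locally blocked all extensions of some endpoint.

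The main obstacle is the Phase~2 merging. One must show that at every step, some two of the remaining endpoints of distinct paths are joined by a free edge of $G$, even when only a few endpoints remain. Since the Phase~1 potential function bounds $e_B(U)$ by a function of $k$ alone, and $G$ is missing at most $k$ edges from $K_m$, the number of free edges between the endpoints of distinct paths is at least $\binom{|U|}{2} - O(k)$ minus a contribution from pairs inside a common component; this quantity is positive provided $|U|$ is large enough in terms of $k$, which is ensured by choosing $m_0(k)$ sufficiently large. A delicate point is to carry $x$ and $y$ through the merges: the strategy must order the merges so that whenever $x$ (respectively $y$) is the endpoint of the path currently being extended or merged, the claimed edge is incident to $x$ (respectively $y$) only when this keeps $x$ (respectively $y$) as an endpoint of the resulting path, which again is guaranteed by the bound on Breaker's degrees at $x$ and $y$ built into the potential.
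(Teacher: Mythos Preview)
The paper does not actually prove this lemma; it says only that the argument of Theorem~1.1 in~\cite{HSt} adapts, and omits the details. That argument is very close in spirit to the five-stage proof of Lemma~\ref{lem::PathOneEndpoint} given later in this paper: start from a near-matching (many paths of length~$1$), then merge paths by always claiming an available edge of maximum \emph{danger} while some edge has danger at least~$3$; once every available edge has danger at most~$2$, switch to a responsive strategy that keeps $\Delta(Br)\le 1$ and the designated endpoints isolated in $Br$, so that the last few merges can be completed by a short case analysis.

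Your outline takes a different route and has a genuine gap in Phase~2. The sentence ``this quantity is positive provided $|U|$ is large enough in terms of $k$, which is ensured by choosing $m_0(k)$ sufficiently large'' is incorrect: $|U|$ is the set of current path-endpoints, it has size at most $2t$ at the start of Phase~2, and it shrinks to~$4$ just before the last merge, \emph{independently of $m_0$}. At that moment the number of forbidden pairs among the endpoints (Breaker's edges inside $U$, up to $k$ missing edges of $G$, the pairs lying inside a single path, and the pairs that would turn $x$ or $y$ into an interior vertex) can already be of order $k$, and in addition Breaker has had $t-1$ further moves during Phase~2 which your Phase~1 potential bound says nothing about. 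A purely greedy ``claim some free cross-edge'' rule gives Breaker ample time to block all legal final merges; nothing in your Phase~2 responds to Breaker's moves or prevents him from concentrating on the last four endpoints.

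This end-game is precisely the hard part, and it is what the danger-driven and then move-for-move responsive merging in~\cite{HSt} (and in the proof of Lemma~\ref{lem::PathOneEndpoint} here) is designed to handle. Without such a mechanism in Phase~2, the argument does not close; the Phase~1 idea of controlling a Theorem~\ref{TreesWithLongPath}-style potential is reasonable, but it must be carried through the merging phase, not just invoked at its start.
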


Lemma~\ref{xy-path} can be proved similarly to Theorem 1.1 from~\cite{HSt}. We omit the straightforward details.

We can now combine Theorem~\ref{TreesWithLongPath} and Lemma~\ref{xy-path} to deduce Theorem~\ref{th::longBarePath}. 

\textbf{Proof of Theorem~\ref{th::longBarePath}}
Let $k = \binom{\D + 1}{2} + 1$, let $m_0 = m_0(k)$ be the constant whose existence follows
from Lemma~\ref{xy-path} and let $m_1 = \max \{m_0, (\D+2)^2\}$. Let $P$ be a bare path in $T$ 
of length $m_1$ with endpoints $x'_1$ and $x'_2$. Let $F$ be the forest which is obtained from $T$ 
by deleting all the vertices in $V(P) \setminus \{x'_1, x'_2\}$. Let $T_1$ be the connected component
of $F$ which contains $x'_1$ and let $T_2$ be the connected component of $F$ which contains $x'_2$.

Maker's strategy consists of two stages. In the first stage she embeds $T_1 \cup T_2$ using the strategy
whose existence follows from Theorem~\ref{TreesWithLongPath} (with $r=2$) while ensuring that Properties 
(i) and (ii) are satisfied. Let $f : T_1 \cup T_2 \rightarrow M$ be an isomorphism, let $x_1 = f(x'_1)$, 
let $x_2 = f(x'_2)$, let $A = V(K_n) \setminus f(V(T_1) \cup V(T_2))$, let $U = A \cup \{x_1, x_2\}$ and 
let $G = (K_n \setminus B)[U]$.

In the second stage she embeds $P$ into $G$ between the endpoints $x_1$ and $x_2$. She does so 
using the strategy whose existence follows from Lemma~\ref{xy-path} which is applicable by the choice 
of $m_1$ and by Property (ii). Hence, $T \subseteq M$ holds at the end of the second stage, that is, 
Maker wins the game.

It follows by Theorem~\ref{TreesWithLongPath} that the first stage lasts exactly 
$v(T_1) + v(T_2) - 2 = n - |V(P)| = n - |U|$ moves. It follows by Lemma~\ref{xy-path} that the second 
stage lasts at most $|U|$ moves. Therefore, the entire game lasts at most $n$ moves as claimed. 
{\hfill $\Box$ \medskip\\}

\section{Trees which do not admit a long bare path} \label{sec:manyleaves}
In this section we will prove Theorem~\ref{th::manyLeaves}. The main idea is to first embed 
the tree $T$ except for a large matching between some of its leaves and their parents and then 
to embed this matching between the previously embedded endpoints and the remaining available
vertices. In the first stage we will waste no moves, whereas in the second we will waste at most 
two. 

In order for this approach to be valid, we must first prove that such a matching exists
in $T$.

\begin{lemma} \label{lem::NoPathLargeMatching}
For all positive integers $\Delta$ and $m$ there exists an integer $n_0 = n_0(\Delta,m)$
such that the following holds for every $n \geq n_0$. Let $T$ be a tree on $n$ vertices with
maximum degree at most $\Delta$ and let $L$ denote the set of leaves of $T$. If $T$ does not 
admit a bare path of length $m$, then $|L| \geq |N_T(L)| \geq \frac{n}{2 \Delta (m+1)}$.
\end{lemma}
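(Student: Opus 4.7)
The plan is to handle the two inequalities separately. The inequality $|L| \geq |N_T(L)|$ is immediate: every leaf has a unique neighbour in $T$, so sending each leaf to that neighbour gives a surjection $L \twoheadrightarrow N_T(L)$. For the main bound, each vertex of $N_T(L)$ has at most $\Delta$ neighbours and hence at most $\Delta$ leaf-neighbours, giving $|L| \leq \Delta \cdot |N_T(L)|$. It therefore suffices to show $|L| \geq n/(2(m+1))$, and the whole game becomes to exploit the hypothesis of no long bare path in order to force many leaves.

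To this end, I partition $V(T) = L \cup V_2 \cup V_{\geq 3}$ according to whether a vertex has degree $1$, $2$, or at least $3$, and combine two elementary facts. First, the standard handshake identity in trees,
$$|L| \;=\; 2 + \sum_{v \in V_{\geq 3}}(d(v) - 2) \;\geq\; 2 + |V_{\geq 3}|,$$
yields $2|L| \geq |L| + |V_{\geq 3}|$. Second, I bound $|V_2|$ by exploiting the \emph{suppressed tree} $T^*$: the tree on vertex set $L \cup V_{\geq 3}$ in which two vertices are joined by an edge if and only if they are the endpoints of a path of $T$ whose interior lies entirely in $V_2$. Its edges are in bijection with the maximal bare paths of $T$, so $|E(T^*)| = |L| + |V_{\geq 3}| - 1$; and since any subpath of a bare path is itself bare, the hypothesis that $T$ contains no bare path of length $m$ is equivalent to every maximal bare path of $T$ having length at most $m-1$. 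Summing these lengths over $E(T^*)$ then gives
$$n - 1 \;=\; |E(T)| \;\leq\; (m-1)\bigl(|L| + |V_{\geq 3}| - 1\bigr),$$
so that $|L| + |V_{\geq 3}| \geq (n-1)/(m-1) + 1$.

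Combining the two inequalities will yield $|L| \geq \frac{1}{2}\bigl(\frac{n-1}{m-1} + 1\bigr) \geq \frac{n}{2(m+1)}$ by a one-line arithmetic check, and dividing by $\Delta$ delivers the desired bound on $|N_T(L)|$. I do not anticipate any real obstacle here: the argument is combinatorially elementary, combining two short identities through the structure of $T^*$. The only mildly subtle point is the equivalence ``no bare path of length $m$'' $\Longleftrightarrow$ ``every maximal bare path has length at most $m-1$'', which follows at once from the observation that any subpath of a bare path is bare. Choosing $n_0 = n_0(\Delta, m)$ large enough also rules out the degenerate case in which $T$ itself is a path (so $V_{\geq 3} = \emptyset$), since any path on more than $m$ vertices already contains a bare path of length $m$.
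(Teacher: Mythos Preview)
Your proof is correct and self-contained, whereas the paper takes a different, shorter route: it simply invokes Lemma~2.1 from~\cite{Kri} (stated in the paper as Lemma~\ref{MichaelTreeDistinction}), which asserts that a tree on $n$ vertices with at most $\ell$ leaves contains at least $\frac{n-(2\ell-2)(k+1)}{k+1}$ pairwise disjoint bare paths of length $k$. Setting $k=m$ and $\ell=|L|$, the absence of any bare path of length $m$ forces this count to be nonpositive, yielding $|L|\geq \frac{n}{2(m+1)}+1$ directly; the paper then divides by $\Delta$ exactly as you do.

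Your argument, by contrast, is a from-scratch proof of precisely the special case of Krivelevich's lemma that is needed here. The suppressed tree $T^*$ and the identity $|L|=2+\sum_{v\in V_{\ge 3}}(d(v)-2)$ are in fact the natural ingredients behind that lemma, so you are essentially reproving the relevant fragment rather than citing it. What you gain is a fully self-contained proof that does not depend on an external reference; what the paper gains is brevity and a slightly stronger intermediate statement (many disjoint bare paths, not merely one). One small point: your argument implicitly assumes $m\ge 2$ (you divide by $m-1$); this is harmless since for $m=1$ the hypothesis ``no bare path of length $1$'' is vacuous for any tree with at least one edge, but it would be worth a word.
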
  

The inequality $|L| \geq |N_T(L)|$ is trivial. Moreover, since the maximum degree 
of $T$ is at most $\Delta$, it follows that $|L| \leq \Delta \cdot |N_T(L)|$. Hence, 
Lemma~\ref{lem::NoPathLargeMatching} is an immediate corollary of the following result 
(with $k = m$ and $\ell = |L|$).

\begin{lemma} [Lemma 2.1 in~\cite{Kri}] \label{MichaelTreeDistinction}
Let $k, n$ and $\ell$ be positive integers. Let $T$ be a tree on $n$ vertices with at most $\ell$ leaves. 
Then $T$ contains a collection of at least $\frac{n - (2 \ell - 2)(k+1)}{k + 1}$
vertex disjoint bare paths of length $k$ each.
\end{lemma}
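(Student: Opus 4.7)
The plan is to prove the lemma by a direct structural decomposition of $T$ into its maximal bare paths. Let $\ell' \leq \ell$ denote the true number of leaves of $T$ and let $b$ be the number of branching vertices of $T$ (those of degree at least~$3$). A first easy step is to invoke the handshake identity $\sum_v d_T(v) = 2(n-1)$, split the sum by vertex degree, and derive the standard bound $b \leq \ell' - 2 \leq \ell - 2$.

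Next, I would decompose the edge set of $T$ into its maximal bare paths $P_1, \ldots, P_M$: these correspond naturally to the edges of the topological tree $T'$ obtained from $T$ by suppressing every vertex of degree~$2$. Since $V(T') = L \cup B$ has $\ell' + b$ vertices and $T'$ is a tree, there are exactly $M = \ell' + b - 1 \leq 2\ell - 3$ such paths, and writing $p_i$ for the length of $P_i$, we have $\sum_{i=1}^{M} p_i = n - 1$.

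The heart of the argument is to extract many pairwise vertex-disjoint bare paths of length~$k$ from each $P_i$, using only its $p_i - 1$ interior (and hence degree-$2$ in $T$) vertices. The point of this restriction is that every degree-$2$ vertex of $T$ lies in the interior of a unique maximal bare path, so sub-paths picked out of different $P_i$'s are automatically vertex-disjoint, with no bookkeeping needed at the shared branching or leaf endpoints. Partitioning the $p_i - 1$ consecutive interior vertices of $P_i$ into blocks of $k+1$ yields $\lfloor (p_i - 1)/(k+1) \rfloor$ vertex-disjoint bare paths of length~$k$ inside $P_i$.

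Finally, summing over $i$ and using the elementary estimate $\lfloor a/(k+1) \rfloor \geq (a - k)/(k+1)$ gives
\[
\sum_{i=1}^{M} \left\lfloor \frac{p_i - 1}{k+1} \right\rfloor \;\geq\; \frac{(n - 1) - M(k+1)}{k+1} \;=\; \frac{n-1}{k+1} - M \;\geq\; \frac{n - (2\ell - 2)(k+1)}{k+1},
\]
where the last inequality uses $M \leq 2\ell - 3$ and reduces to $k \geq 0$. The only real subtlety is making sure that the loss coming from confining oneself to strict interior vertices of each $P_i$ (the $-M$ term above) is absorbed by the slack built into $b \leq \ell' - 2$; the displayed chain of inequalities is precisely what verifies this. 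Everything else is bookkeeping.
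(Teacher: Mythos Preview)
Your proof is correct and is essentially the standard argument for this lemma. Note that the paper does not actually prove this statement: it is quoted verbatim as Lemma~2.1 from~\cite{Kri} and invoked without proof, so there is no ``paper's own proof'' to compare against. Your decomposition of $T$ into maximal bare paths via the topological tree $T'$, together with the count $M = \ell' + b - 1 \leq 2\ell - 3$ and the greedy extraction of length-$k$ subpaths from the interior of each $P_i$, is exactly the natural approach and matches the argument in Krivelevich's original paper.
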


Next, we prove that Maker can build a perfect matching very quickly when playing on the 
edge set of a very dense subgraph of a sufficiently large complete bipartite graph.

Let $G = (V,E)$ be a graph. The winning sets of the \emph{perfect matching game}, played
on the board $E$, are the edge sets of all matchings of $G$ of size $\lfloor |V|/2 \rfloor$.
The following theorem was proved in~\cite{HKSS}.

\begin{theorem} [Theorem 1.2 in~\cite{HKSS}] \label{PMKn}
There exists an integer $n_0$ such that for every $n \geq n_0$, Maker has a strategy to 
win the perfect matching game, played on $E(K_n)$, within $\lfloor n/2 \rfloor + (n+1) \mod 2$ 
moves.
\end{theorem}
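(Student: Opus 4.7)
The plan is to equip Maker with a potential-based strategy closely analogous to the one used in the proof of Theorem~\ref{TreesWithLongPath}. Throughout the game let $A \subseteq V(K_n)$ denote the set of \emph{unmatched} vertices, that is, those not incident to any Maker edge; initially $A = V(K_n)$, and every successful Maker move shrinks $|A|$ by two by claiming a free edge inside $A$. The central quantity to track will be the potential
$$
\psi \;=\; e_B(A) \;+\; \max_{v\in A} d_B(v,A),
$$
which measures how sharply Breaker has concentrated his edges on the side of vertices Maker still has to match. Maker's strategy will be a dichotomy: if $\psi$ is below a suitable absolute constant $C$, she claims an arbitrary free edge inside $A$; otherwise she picks a vertex $v\in A$ attaining $\max_{u\in A} d_B(u,A)$ and claims a free edge $vz$ with $z\in A$, thereby decreasing $\psi$.

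The first step of the proof is to show, by induction on the number of moves, that $\psi$ stays bounded by $C$ throughout the main phase of the game. Each Breaker move raises $\psi$ by at most $2$ (at most $1$ from $e_B(A)$ and at most $1$ from the maximum-degree term), while each Maker move played in the high-potential regime decreases $\psi$ by at least $2$, because both the chosen $v$ and $z$ leave $A$. The second step is to verify that a bounded $\psi$ implies that Maker always has a legal move: as long as $|A|$ is still linear in $n$, the number of free edges inside $A$ is at least $\binom{|A|}{2} - C$, and the free-degree of any prescribed vertex $v\in A$ into $A$ is at least $|A|-1-C > 0$, which more than suffices to carry out either branch of the strategy.

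The main obstacle, and the source of the additive term $(n+1)\bmod 2$, will be the endgame once $|A|$ has shrunk to a small constant. At that point the bounded-potential argument no longer directly produces a usable free edge inside $A$, and Breaker may well have claimed the single edge that would close the matching. When $n$ is even, Maker handles this by a \emph{rerouting move}: letting $u,v\in A$ be the last two unmatched vertices, she spends one additional move to swap some previously matched pair $\{x,y\}$ for the two new pairs $\{u,x\}$ and $\{v,y\}$, where $\{x,y\}$ is chosen so that both $ux$ and $vy$ are free. A pigeonhole count on the bipartite graph between $\{u,v\}$ and the set of matched pairs, combined with the bounded-potential control inherited from the main phase (which limits how many pairs Breaker can have blocked on both sides), guarantees that a suitable $\{x,y\}$ always exists; tallying moves then gives exactly $\lfloor n/2\rfloor + 1$. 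When $n$ is odd, no rerouting is needed since Maker is allowed to leave one vertex unmatched, which is why the extra term $(n+1)\bmod 2$ vanishes. Carrying out this endgame cleanly, especially showing that the rerouting pair really does exist in all relevant configurations, is the main technical point; the rest is a direct adaptation of the potential-function machinery already developed in Section~\ref{sec::TwoTrees}.
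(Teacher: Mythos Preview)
First, a structural point: the paper does \emph{not} prove Theorem~\ref{PMKn}. It is quoted as Theorem~1.2 of~\cite{HKSS} and simply cited, so there is no in-paper proof to compare against. What you are attempting is a fresh proof of an external result.

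On the substance of your sketch, the potential argument for the main phase is broadly sound, with one small omission: when $\max_{v\in A} d_B(v,A)=1$, claiming an edge from a maximum-degree vertex to an \emph{arbitrary} $z\in A$ may only drop $\psi$ by~$1$, not~$2$; you need to stipulate that $z$ is chosen with $d_B(z,A)\ge 1$ whenever such a $z$ is available. This is easy to patch.

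The genuine gap is the endgame for even $n$. Your plan is: reach $A=\{u,v\}$ after $n/2-1$ moves, observe $uv\in E(B)$, find a matched pair $\{x,y\}$ with $ux$ and $vy$ both free, and spend moves $n/2$ and $n/2+1$ claiming $ux$ and then $vy$. The existence of such a pair is fine (a direct edge count suffices; your appeal to ``bounded-potential control inherited from the main phase'' is a red herring, since $\psi$ says nothing about Breaker's edges leaving $A$). The problem is Breaker's move in between: after you claim $ux$, the \emph{unique} one-edge completion of a perfect matching is $vy$, and Breaker will simply claim it. You are then left with $\{v,y\}$ unmatched and $vy\in E(B)$ --- exactly the situation you started from, but with one move already spent. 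To finish in two moves you must arrange that the first rerouting move leaves \emph{two} single-edge completions, i.e.\ a genuine double threat; a single good pair $\{x,y\}$ cannot provide this. The proof in~\cite{HKSS} avoids the issue altogether by having Maker carry an auxiliary edge (so that her graph is a matching plus a short path) through the main phase; that extra edge is precisely what furnishes the second threat at the end, and it is the piece your proposal is missing.
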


The following analogous result, which applies to the perfect matching game, played on 
a complete bipartite graph, holds as well.

\begin{theorem} \label{th::PM}
There exists an integer $n_0$ such that for every $n \geq n_0$, Maker has a strategy to 
win the perfect matching game, played on $E(K_{n,n})$, within $n+1$ moves.
\end{theorem}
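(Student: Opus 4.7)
The plan is to adapt the strategy used in~\cite{HKSS} to prove Theorem~\ref{PMKn} to the bipartite setting, following the template of the proof of Theorem~\ref{TreesWithLongPath}. Let $A$ and $B$ denote the two vertex classes of $K_{n,n}$. At any stage of the game, let $M$ be Maker's current matching and let $A_0 = A \setminus V(M)$, $B_0 = B \setminus V(M)$ be the currently unmatched vertices of each class. Maker's aim is to claim, in each of her moves, a free edge between $A_0$ and $B_0$, so that every move extends $M$ by exactly one edge.

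I would equip Maker with a potential function
$$
\psi \;=\; e_B(A_0, B_0) \;+\; \sum_{v \in A_0 \cup B_0} \max\!\left\{0,\; d_B(v, X_v) - c\right\},
$$
where $X_v = B_0$ if $v \in A_0$ and $X_v = A_0$ if $v \in B_0$, and $c$ is a small constant. As in the proof of Theorem~\ref{TreesWithLongPath}, Maker's strategy branches into three cases: if $\psi$ is below some absolute threshold $C$, she picks an arbitrary free edge of $K_{n,n}[A_0, B_0]$; if $\psi$ exceeds $C$ because of a particular dangerous vertex $v$, she claims a free edge at $v$; and if $\psi$ exceeds $C$ because of $e_B(A_0, B_0)$ alone, she claims a free edge $uv$ chosen to reduce $e_B(A_0, B_0)$. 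In each case the edge has both endpoints in $A_0 \cup B_0$, so $|M|$ grows by one. An inductive bookkeeping argument modeled on Claims~\ref{cl::Maker}--\ref{claim_p} then shows that $\psi$ remains bounded by $C$ throughout the game, since Breaker can raise $\psi$ by at most $1$ per turn while Maker's response never does.

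The principal obstacle is the endgame. While $|A_0| = |B_0|$ is much larger than $C$, the bound on $\psi$ forces every unmatched vertex to have free-degree at least $|A_0|/2$ into the other class, so by the standard Hall-type criterion no obstruction to extending $M$ can arise and Maker proceeds greedily. Once $|A_0| = |B_0|$ shrinks to a constant $k_0 = k_0(C)$, however, Breaker might have concentrated enough edges to block all extensions incident to some small vertex set, in which case Maker must perform an augmenting-path rearrangement of $M$ to finish. The resolution is a bounded-size case analysis on the resulting dense bipartite subgraph: emulating the strategy behind Lemma~\ref{xy-path}, one shows that on a bipartite graph with $k_0$ vertices on each side and only a constant number of Breaker edges, Maker can complete a perfect matching in at most $k_0 + 1$ moves, wasting at most one. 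Adding this single extra move to the $n - k_0$ matching-growing moves of the bulk phase gives the claimed total of $n + 1$.
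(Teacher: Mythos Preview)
The paper does not actually prove this theorem; it simply states that the argument from~\cite{HKSS} for Theorem~\ref{PMKn} carries over and omits the details. The intended proof is therefore the HKSS one, whose shape you can see reflected in Lemma~\ref{FastPerMatBP} and Lemma~\ref{lem::MissingEdgesPM} of this paper: track the Breaker-degree of each unmatched vertex directly, in each move match off a vertex of (near-)maximum Breaker-degree in the unmatched set, and finish the last constant number of vertices by an explicit endgame analysis.

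Your outline has the right bulk/endgame structure, but the potential $\psi$ you import from Theorem~\ref{TreesWithLongPath} does not behave as you claim. In that theorem, once the roots $x_1,\ldots,x_r$ are closed, the open set $\bigcup_i f(\mathcal{O}_{T_i})$ is \emph{disjoint} from $U$; hence a single Breaker edge contributes either to $e_B(U)$ or to one term $\phi(w)$, never to both, and that is precisely why Breaker's move raises $\psi$ by at most~$1$ (see the proof of Claim~\ref{obs_p}). In your bipartite matching setting the unmatched set $A_0\cup B_0$ plays both roles simultaneously. A Breaker edge $ab$ with $a\in A_0$ and $b\in B_0$ raises $e_B(A_0,B_0)$ by $1$ and, whenever $d_B(a,B_0)\ge c$ and $d_B(b,A_0)\ge c$ already hold, also raises each of those two summands by $1$; so $\psi$ can jump by $3$ in a single Breaker move. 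The inductive step modeled on Claims~\ref{cl::Maker} and~\ref{obs_p} therefore breaks down as stated. You would need either a different potential that avoids this double counting, or a separate argument that Maker's reply always decreases $\psi$ by at least as much as Breaker's increase; neither is supplied. The direct degree-tracking approach of~\cite{HKSS} sidesteps this issue entirely and is what the paper has in mind.
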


One can prove Theorem~\ref{th::PM} using essentially the same argument as in the proof 
of Theorem~\ref{PMKn} given in~\cite{HKSS}. We omit the straightforward details and refer 
the reader to~\cite{HKSS}.

The following lemma, which will be used in the proof of Theorem~\ref{th::manyLeaves}, 
asserts that Maker can win the perfect matching game very quickly even when the board 
is a very dense subgraph of a sufficiently large complete bipartite graph.

\begin{lemma} \label{FastPerMatBP}
For all non-negative integers $k_1$ and $k_2$ there exists an integer $f(k_1,k_2)$ such
that the following holds for every $n \geq f(k_1,k_2)$. Let $G = (U_1 \cup U_2, E)$ 
be a bipartite graph which satisfies the following properties:
\begin{description}
\item [(i)] $|U_1| = |U_2| = n$;
\item [(ii)] $d(u_1, U_2) \geq n - k_1$ for every $u_1 \in U_1$;
\item [(iii)] $d(u_2, U_1) \geq n - k_2$ for every $u_2 \in U_2$.
\end{description}
Then Maker has a strategy to win the perfect matching game, played on $E$, 
within $n+2$ moves.
\end{lemma}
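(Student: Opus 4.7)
The plan is to adapt Maker's strategy for the perfect matching game on the complete bipartite graph $K_{n,n}$ (Theorem~\ref{th::PM}) to the almost-complete setting of $G$. Conceptually, I would treat the ``missing'' edges $E(K_{n,n}) \setminus E(G)$ as if Breaker had claimed them for free before the game starts. By hypothesis, every vertex in $U_1$ is incident to at most $k_1$ such missing edges and every vertex in $U_2$ to at most $k_2$, so this virtual head start has bounded local degree, even though its total size can grow linearly in $n$.

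First I would run a potential-function argument in the spirit of the proof of Theorem~\ref{TreesWithLongPath}. Maker maintains a partial matching $M$ together with a potential $\psi$ that measures, for every unmatched vertex $u$, the surplus of Breaker-plus-missing edges at $u$ over the moves in which Maker has already committed to $u$. At each turn Maker alternates between \emph{extension moves}, in which she claims a free edge between two currently unmatched vertices of opposite parts, and \emph{defusing moves}, aimed at unmatched vertices whose surplus has become positive. An induction analogous to the one underlying Claims~\ref{cl::Maker} and~\ref{obs_p} shows that $\psi$ stays bounded by a constant depending only on $k_1$ and $k_2$ throughout the game, which in turn guarantees that at every moment each unmatched vertex still has many free neighbours among the other unmatched vertices.

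Next I would extend the matching by such moves until only a bounded (in $k_1 + k_2$) number of vertices remain unmatched in each part. At that point, the boundedness of $\psi$ together with the density hypotheses ensures that the induced bipartite subgraph on the unmatched vertices still satisfies a strong Hall-type condition on free edges, so that Maker can complete the matching in a bounded number of additional moves. Comparing move counts against Theorem~\ref{th::PM}, one wants to verify that the missing-edge perturbation costs Maker at most one extra move on top of the single move loss already present in the $K_{n,n}$ strategy, giving the claimed bound of $n + 2$.

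The main obstacle will be the endgame. After most of the matching has been built, a small residual set in $U_1$ must be matched to a small set in $U_2$, and in principle many of the bipartite edges on this residual set could be missing or Breaker-owned. Ensuring that the priority rule --- when Maker defuses versus when she extends --- leaves the residual bipartite subgraph with a near-perfect matching of free edges, so that Maker can close things out in at most two extra moves overall, is the delicate part of the argument and requires calibrating the threshold at which defusing moves are triggered so that problematic vertices never accumulate at the very end of the game.
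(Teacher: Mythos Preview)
Your plan transplants the potential from Theorem~\ref{TreesWithLongPath} too literally. In that proof $\psi$ starts at a constant because at the outset Breaker has claimed only $\sum_i d_i$ edges and there are no missing edges in $K_n$; the base case of Claim~\ref{obs_p} is precisely $\psi\le\binom{\Delta+r-1}{2}$ after those first moves. Here you fold the missing edges of $G$ into Breaker's graph, and there can be up to $\min(k_1,k_2)\cdot n$ of them. Any sum-type potential of the form $\sum_{u}\max\{0,d_{Br}(u)-d_M(u)\}$ therefore begins at $\Theta(n)$, not at $O_{k_1,k_2}(1)$, and since each round changes $\psi$ by $O(1)$ there is no way to drive it down to a constant while simultaneously building the matching. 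The claimed analogue of Claims~\ref{cl::Maker} and~\ref{obs_p} simply has no valid base case.

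Your endgame is also circular. Reducing to a residual $c\times c$ bipartite graph (with $c$ bounded in $k_1+k_2$) in which the maximum $Br$-degree on each side is at most $k_i$ is exactly an instance of the lemma you are trying to prove, with $n$ replaced by $c$. A Hall-type condition on the free edges at one instant says nothing about whether Maker can \emph{win the game} on the residual within $c+2$ moves, because Breaker continues to play there.

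The paper's proof is organised around induction on $k_1+k_2$ rather than a constant-bounded sum potential. In the inductive step Maker does not try to keep a sum small; she maintains the \emph{maximum} $Br$-degrees $\Delta_1\le k_1$ and $\Delta_2\le k_2$ after each of her moves (Claim~\ref{cl::keepDegreesLow}), and uses the decreasing sum $D(i)=\sum_{v\in S_1}d_{Br}(v)$ only to bound the \emph{length} of that phase by $k_1 n/(k_1+1)$, so that a linear number of vertices remain unmatched. Two further carefully chosen moves then force $\Delta_1+\Delta_2\le k_1+k_2-1$, and the induction hypothesis is applied to the (still linearly large) residual. The base case $k_1+k_2\le 2$ is handled separately by splitting the board into two halves and invoking Theorem~\ref{th::PM} on each. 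If you want to rescue your outline, the natural fix is to replace the sum potential by the pair $(\Delta_1,\Delta_2)$ and to recognise that the ``endgame'' must itself be handled by induction rather than by a one-shot Hall argument.
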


\begin{remark} \label{rem::optimal}
The bound on the number of moves given in Lemma~\ref{FastPerMatBP} is best possible, even
for the case $k_1 = k_2 = 1$. Indeed, one can show that, when playing on $K_{n,n}$ from 
which a perfect matching was removed, Maker cannot build a perfect matching within $n+1$
moves; we omit the details. 
\end{remark}

\textbf{Proof of Lemma~\ref{FastPerMatBP}}
The following notation and terminology will be used throughout this proof. 
At any point during the game, let $S$ denote the set of vertices of $G$ 
which are isolated in Maker's graph, let $S_1 = S \cap U_1$ and let 
$S_2 = S \cap U_2$. Let $Br = ((K_{n,n} \setminus G) \cup B)[S]$. For $i \in \{1,2\}$ 
let $\Delta_i = \max \{d_{Br}(w) : \, w \in S_i\}$.

We prove Lemma~\ref{FastPerMatBP} by induction on $k_1 + k_2$. 
In the induction step we will need to assume that $k_1 + k_2 \geq 3$. 
Hence, we first consider the case $k_1 + k_2 \leq 2$. Note that if
$k_1 = 0$, then $k_2 = 0$ and vice versa. Since, moreover, the case 
$k_1 = k_2 = 0$ follows directly from Theorem~\ref{th::PM}, it suffices 
to consider the case $k_1 = k_2 = 1$. In this case $K_{n,n} \setminus G$
is a matching. Let $U_1 = \{x_1, \ldots, x_n\}$ and $U_2 = \{y_1, \ldots, y_n\}$ and 
assume without loss of generality that $E(K_{n,n} \setminus G) \subseteq \{x_i y_i : \, 1 \leq i \leq n\}$.
Moreover, assume without loss of generality that the edge claimed by Breaker in his first
move is either $x_1 y_1$ or $x_1 y_2$. Let $A_1 = \{x_1, \ldots, x_{\left\lceil n/2 \right\rceil}\}$, 
$A_2 = \{y_{\left\lfloor n/2 \right\rfloor + 1}, \ldots, y_n\}$, $B_1 = U_1 \setminus A_1$,
$B_2 = U_2 \setminus A_2$, $H'_1 = (G \setminus B)[A_1 \cup A_2]$ and $H_2 = (G \setminus B)[B_1 \cup B_2]$. 
Note that $H_2 \cong K_{|B_1|, |B_1|}$ and that there exists an edge $e \in E(K_{|A_1|, |A_1|})$
such that $H'_1 \supseteq K_{|A_1|, |A_1|} \setminus \{e\}$. Let $H_1 = K_{|A_1|, |A_1|} 
\setminus \{e\}$ (if $H'_1 = K_{|A_1|, |A_1|}$, then choose $e \in E(K_{|A_1|, |A_1|})$ arbitrarily). 
Let ${\mathcal S}_1$ (respectively ${\mathcal S}_2$) be Maker's strategy for 
the perfect matching game on $K_{|A_1|, |A_1|}$ (respectively $K_{|B_1|, |B_1|}$) whose existence 
follows from Theorem~\ref{th::PM}. Maker plays her first move in $H_1$ according to ${\mathcal S}_1$. 
She views the board to be $E(K_{|A_1|, |A_1|})$ and assumes that Breaker claimed $e$ in his first move.   
In the remainder of the game, Maker plays on $E(H_1)$ and $E(H_2)$ in parallel. 
That is, whenever Breaker claims an edge of $H_i$ for some $i \in \{1,2\}$, Maker claims a 
free edge of the same board according to ${\mathcal S}_i$ (unless she has already built a 
perfect matching on this board, in which case she claims a free edge of the other board) and
whenever Breaker claims an edge of $G \setminus (H_1 \cup H_2)$, Maker plays in some $H_i$ in 
which she has not yet built a perfect matching. 

Since Maker plays according to ${\mathcal S}_1$ and ${\mathcal S}_2$, it follows by Theorem~\ref{th::PM} 
that she builds a perfect matching of $H_1$ within $|A_1| + 1$ moves and a perfect matching 
of $H_2$ within $|B_1| + 1$ moves. The union of these two matchings forms a perfect matching 
of $G$ which Maker builds within $n+2$ moves.

Assume then that $k_1 + k_2 \geq 3$ and that the assertion of the lemma holds 
for $k_1 + k_2 - 1$. Assume without loss of generality that $k_2 \geq k_1$; 
in particular, $k_2 \geq 2$. We present a strategy for Maker and then prove that it
allows her to build a perfect matching of $G$ within $n+2$ moves. At any point during 
the game, if Maker is unable to follow the proposed strategy, then she forfeits the game. 
The strategy is divided into the following two stages.

\textbf{Stage I:} Maker builds a matching while making sure that neither $\Delta_1$
nor $\Delta_2$ are increased and trying to decrease $\Delta_1 + \Delta_2$. This stage is
divided into the following two phases.

\textbf{Phase 1:} At the beginning of the game and immediately after each of her moves in this phase,
if $\Delta_1 < k_1$, then Maker proceeds to Stage II. Otherwise, if there exists a free edge $uv$ 
such that  
\begin{description}
\item [(a)] $u \in S_1$ and $v \in S_2$;
\item [(b)] $d_{Br}(u) = \Delta_1$;
\item [(c)] $d_{Br}(v) = \max \{d_{Br}(w) : \, w \in N_G(u, S_2) \textrm{ for which } uw \textrm{ is free}\}$;
\item [(d)] $d_{Br}(v) \geq 2$;
\end{description}
then Maker claims an arbitrary such edge and repeats Phase 1. If no such edge exists, then Maker proceeds to
Phase 2.

\textbf{Phase 2:} In her first move in this phase, Maker claims a free edge $uv$ such that
$u \in S_1$, $d_{Br}(u) = \Delta_1$ and $v \in S_2$. Let $xy$ denote the edge claimed by 
Breaker in his following move, where $x \in U_1$ and $y \in U_2$. In her next (and final) 
move in this phase, Maker plays as follows.  
\begin{description}
\item [(a)] If $x \notin S_1$ or $y \notin S_2$, then Maker claims a free edge $ab$ such that $a \in S_1$, 
$b \in S_2$ and $d_{Br}(b) = \Delta_2$. 
\item [(b)] Otherwise, if $d_{Br}(y) > k_2$, then Maker claims a free edge $yz$ for an arbitrary vertex $z \in N_G(y, S_1)$.
\item [(c)] Otherwise, if there exists a vertex $w \in S_2$ such that $d_{Br}(w) \geq k_2$ and
$xw$ is free, then Maker claims $xw$. 
\item [(d)] Otherwise, Maker claims a free edge $xz$ for an arbitrary vertex $z \in N_G(x, S_2)$.
\end{description}
Maker then proceeds to Stage II. 

\textbf{Stage II:} Maker builds a perfect matching of $G[S]$ within $|S_1| + 2$ moves.

It is evident that, if Maker can follow the proposed strategy without forfeiting the game, 
then she wins the perfect matching game, played on $E(G)$, within $n+2$ moves. 
It thus suffices to prove that she can indeed do so. 

We begin by proving the following simple claim.

\begin{claim} \label{cl::keepDegreesLow}
If Maker follows the proposed strategy, then $\Delta_1 \leq k_1$ and $\Delta_2 \leq k_2$ 
hold immediately after each of Maker's moves in Phase 1 of Stage I.
\end{claim}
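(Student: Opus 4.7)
The plan is to proceed by induction on the index $i$ of Maker's Phase 1 move. For the base case $i=0$ (before any moves have been played), we have $S = V(G)$ and $Br = K_{n,n} \setminus G$, so assumptions (ii) and (iii) directly give $d_{Br}(u) \leq k_1$ for every $u \in U_1$ and $d_{Br}(u) \leq k_2$ for every $u \in U_2$, whence $\Delta_1 \leq k_1$ and $\Delta_2 \leq k_2$.

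For the inductive step, I would assume the invariant holds immediately after Maker's $i$th Phase 1 move and derive it after her $(i+1)$th. Between these two moves Breaker plays once, adding at most one edge $xy$ to $B$ and hence to $Br$. This raises at most one $d_{Br}$-value on each side by one, so immediately before Maker's $(i+1)$th move we have $\Delta_1 \leq k_1 + 1$ and $\Delta_2 \leq k_2 + 1$, with any violator being an endpoint of Breaker's newly claimed edge. Maker then claims $uv$ satisfying (a)--(d); since removing $u$ and $v$ from $S$ can only decrease $d_{Br}$ at other vertices, it suffices to verify that this removal restores both bounds.

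For $\Delta_1$, condition (b) enforces $d_{Br}(u) = \Delta_1$: if $\Delta_1 = k_1 + 1$, then by the inductive hypothesis $u$ must equal the unique violator in $S_1$ (the $U_1$-endpoint of Breaker's latest edge), and its removal from $S_1$ yields $\Delta_1 \leq k_1$; if $\Delta_1 \leq k_1$ the bound is already in place. Symmetrically, condition (c) picks $v$ as an argmax of $d_{Br}(w)$ over $w \in N_G(u, S_2)$ with $uw$ free; in the generic case the violator $y \in S_2$ lies among these candidates, so removing $v = y$ brings $\Delta_2 \leq k_2$.

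The most delicate part, and the main obstacle, is the subcase where $\Delta_1 = k_1 + 1$ and $\Delta_2 = k_2 + 1$ are simultaneously produced by a single Breaker edge $xy$: here $u$ is forced to be $x$, yet $xy \in B$ makes $uy$ non-free and excludes $y$ from the argmax in (c). Resolving this requires exploiting condition (d)'s threshold $d_{Br}(v) \geq 2$ together with the distinction between ``Phase 1 move played'' and ``transition to Phase 2'', using hypotheses (ii) and (iii) to argue that either such a conflict triggers a transition to Phase 2 (to which the claim does not apply), or it can be sidestepped via an alternative witness of $\Delta_1$, or the quantitative bounds on $|B| \leq i+1$ and on $y$'s non-neighborhood in $U_1$ suffice to keep $\Delta_2 \leq k_2$. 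A careful case analysis along these lines completes the proof.
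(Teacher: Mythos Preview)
Your inductive framework and your handling of $\Delta_1$ match the paper's. The gap is in your treatment of $\Delta_2$: you try to restore the bound by removing $v$ from $S_2$, and you correctly spot the obstruction that if $uy$ is non-free then $y$ is excluded from the argmax in (c), so possibly $v \neq y$. What you miss is that ``$uy$ non-free'' (with $u \in S_1$, $y \in S_2$) is precisely the statement $uy \in E(Br)$, and therefore removing $u$ from $S_1$ already drops $d_{Br}(y)$ by one, regardless of which $v$ is chosen. In your ``delicate'' subcase $u = x$ and $xy \in E(B) \subseteq E(Br)$, so deleting $u$ takes $d_{Br}(y)$ from $k_2 + 1$ down to $k_2$ immediately. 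The same observation covers the remaining case $u \neq x$, $v \neq y$, $d_{Br}(y) = k_2 + 1$: since $y$ is then the unique vertex of maximum $Br$-degree in $S_2$ and $d_{Br}(y) \geq k_2 + 1 \geq 2$, condition (c) would have forced $v = y$ were $uy$ free; hence $uy \in E(Br)$, and removing $u$ again suffices. This is exactly how the paper's proof proceeds.

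Your proposed workarounds do not close the gap: the claim is asserted precisely for moves that \emph{are} played in Phase~1, so a transition to Phase~2 is irrelevant; when $\Delta_1 = k_1 + 1$ the witness $x$ is unique, so there is no alternative; and crude bounds on $|B|$ give no control over the individual value $d_{Br}(y)$.
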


\begin{proof}
The claim clearly holds before the game starts. Assume it holds immediately after Maker's $j$th move 
for some non-negative integer $j$. Let $xy$ denote the edge claimed by Breaker in his $(j+1)$st move, where 
$x \in U_1$ and $y \in U_2$. Since Maker does not increase $d_{Br}(w)$ for any $w \in S$ in any of her moves,
it follows that if $x \notin S_1$ or $y \notin S_2$, then there is nothing to prove. Assume then
that $x \in S_1$ and $y \in S_2$. It follows by our assumption that $\Delta_1 \leq k_1 + 1$ and $\Delta_2 \leq k_2 + 1$
and that $d_{Br}(w) \leq k_1$ holds for every $w \in S_1 \setminus \{x\}$ and $d_{Br}(w) \leq k_2$ 
holds for every $w \in S_2 \setminus \{y\}$. Let $uv$ denote the edge claimed by Maker in her $(j+1)$st 
move, where $u \in S_1$ and $v \in S_2$. If $u = x$, then $x$ is removed from $S_1$ and,
as a result, $d_{Br}(y) \leq k_2$ holds after this move. Assume then that $u \neq x$; it follows
by Maker's strategy that $d_{Br}(x) \leq d_{Br}(u) \leq k_1$. If $d_{Br}(y) \leq k_2$, then there
is nothing to prove. Assume then that $d_{Br}(y) = k_2 + 1$. If $v = y$, then $y$ is removed from 
$S_2$. Assume then that $v \neq y$. Since $y$ is the unique vertex of maximum degree in $S_2$,
it follows by Maker's strategy that $uy \in E(Br)$. Hence, by claiming $uv$ Maker decreases $d_{Br}(y)$.
We conclude that $\Delta_1 \leq k_1$ and $\Delta_2 \leq k_2$ hold immediately after Maker's
$(j+1)$st move. 
\end{proof}

We will first prove that Maker can follow Stage I of her strategy without forfeiting the game, 
and, moreover, that this stage lasts at most $\frac{k_1 n}{k_1 + 1} + 2$ moves. 

It is obvious that Maker can follow her strategy for Phase 1. We will prove that this phase
lasts at most $\frac{k_1 n}{k_1 + 1}$ moves. For every non-negative integer $i$, immediately 
after Breaker's $(i+1)$st move, let $D(i) = \sum_{v \in S_1} d_{Br}(v)$. Note that $D(i) \geq 0$ 
holds for every $i$ and that $D(0) \leq k_1 n + 1$. For an arbitrary non-negative integer $j$, 
let $uv$ be the edge claimed by Maker in her $(j+1)$st move. Then $D(j+1) \leq D(j) - d_{Br}(u) - 
d_{Br}(v) + 1 \leq D(j) - (k_1 + 1)$, where the last inequality follows by Properties (b) and (d) 
of the proposed strategy for Phase 1. It follows that there can be at most $\frac{k_1 n}{k_1 + 1}$ 
such moves throughout Stage I. 

By its description, Phase 2 lasts exactly 2 moves. It follows that indeed Stage I lasts at most
$\frac{k_1 n}{k_1 + 1} + 2$ moves. Therefore, $|S_1| = |S_2| \geq \frac{n}{k_1 + 1} - 2 > 2 +
\max \{k_1, k_2\} \geq \max \{\Delta_1, \Delta_2\}$ holds throughout Stage I, where the second 
inequality holds since $n$ is sufficiently large with respect to $k_1$ and $k_2$. Hence, for every 
$u \in S$ there exists some $v \in S$ such that $uv \in E$ is free. In particular, Maker can follow
the proposed strategy for Phase 2.  

It remains to prove that Maker can follow Stage II of the proposed strategy without forfeiting
the game. Consider the game immediately after Maker's last move in Stage I (or before the game 
starts in case Maker plays no moves in Stage I). As noted above, at this point we have
$|S_1| = |S_2| \geq \frac{n}{k_1 + 1} - 2 \geq \max \{f(k_1 - 1, k_2), f(k_1, k_2 - 1)\}$, where 
the last inequality holds for sufficiently large $n$. 

We claim that $\Delta_1 \leq k_1$, $\Delta_2 \leq k_2$ and $\Delta_1 + \Delta_2 \leq k_1 + k_2 - 1$ 
hold at this point as well. Note that, by Claim~\ref{cl::keepDegreesLow}, $\Delta_1 \leq k_1$ and 
$\Delta_2 \leq k_2$ hold after each of Maker's moves in Phase 1 of Stage I. If Maker enters Stage II directly 
from Phase 1 of Stage I, then $\Delta_1 < k_1$ holds as well and our claim follows. Assume then that Maker plays 
the two moves of Phase 2. It follows by Claim~\ref{cl::keepDegreesLow} that immediately before Maker's
first move in this phase there is at most one vertex $z \in S_1$ such that $d_{Br}(z) > k_1$ and
at most one vertex $z' \in S_2$ such that $d_{Br}(z') > k_2$. In her first move in Phase 2, Maker claims
an edge $uv$ such that $d_{Br}(u) = \Delta_1$. Since this is done in Phase 2, it follows that $uw \in E(Br)$ 
holds at this moment for every $w \in S_2$ for which $d_{Br}(w) \geq 2$. Clearly, $\Delta_1 \leq k_1$ holds 
after this move. Moreover, since $k_2 \geq 2$, by removing $u$ from $S_1$, Maker decreases $d_{Br}(w)$
for every $w \in S_2$ whose degree was at least $k_2$. Hence, $\Delta_2 \leq k_2$ holds after this move
and, moreover, there is at most one vertex $z'' \in S_2$ such that $d_{Br}(z'') = k_2$. In his next move,
Breaker claims an edge $xy$. It is not hard to see that each of the four options for Maker's next move 
(as described in the proposed strategy), ensures that $\Delta_1 \leq k_1$ and $\Delta_2 < k_2$ will hold
after this move.  

We conclude that $|S_1| = |S_2| \geq \max \{f(k_1 - 1, k_2), f(k_1, k_2 - 1)\}$, $\Delta_1 \leq k_1$, 
$\Delta_2 \leq k_2$ and $\Delta_1 + \Delta_2 \leq k_1 + k_2 - 1$ hold immediately before Breaker's 
first move in Stage II. It thus follows by the induction hypothesis that Maker can indeed build a 
perfect matching of $G[S]$ within $|S_1| + 2$ moves. 
{\hfill $\Box$ \medskip\\}

We are now ready to prove the main result of this section.

\textbf{Proof of Theorem~\ref{th::manyLeaves}}
Let $L$ denote the set of leaves of $T$ and let $\varepsilon = (2 \Delta (m_2 + 1))^{-1}$. Since 
$\Delta(T) \leq \Delta$ and since $T$ does not admit a bare path of length $m_2$, it follows by 
Lemma~\ref{lem::NoPathLargeMatching} that $|L| \geq |N_T(L)| \geq \frac{n}{2 \Delta (m_2 + 1)}
= \varepsilon n$. Let $L' \subseteq L$ be a maximal set of leaves, no two of which have a common 
parent in $T$ (that is, $|L'| = |N_T(L)|$) and let $T' = T \setminus L'$.  

First we describe a strategy for Maker in $(E(K_n), {\mathcal T}_n)$ and then prove that it allows her to 
build a copy of $T$ within $n+1$ moves. At any point during the game, if Maker is unable to follow the proposed 
strategy, then she forfeits the game. The proposed strategy is divided into the following two stages.

\textbf{Stage I:} In this stage, Maker's aim is to embed a tree $T''$ such that 
$T'\subseteq T'' \subseteq T$ and $|V(T'')| \leq n - \varepsilon n/2$. 
Moreover, Maker does so in exactly $|V(T'')| - 1$ moves.

Let $k$ be the smallest integer such that 
$\Delta + 3 \leq \varepsilon \Delta^k /40$. Throughout this stage, Maker maintains a set 
$S \subseteq V(T)$ of embedded vertices, an $S$-partial embedding $f$ of $T$ in $K_n \setminus B$, 
a set $A = V(K_n) \setminus f(S)$ of available vertices and a set $D \subseteq V(K_n)$ of \emph{dangerous} 
vertices, where a vertex $v \in V(K_n)$ is called dangerous if $d_B(v) \geq \Delta^{k+1}$ and $v$ is 
either an available vertex or an open vertex with respect to $T$. Initially, $D = \emptyset$, 
$S = \{v'\}$ and $f(v') = v$, where $v' \in V(T')$ and $v \in V(K_n)$ are arbitrary vertices. 

For as long as $V(T') \setminus S \neq \emptyset$ or $D \neq \emptyset$,
Maker plays as follows:
\begin{enumerate} [(1)]
\item \label{I1} If $D \neq \emptyset$, then let $v \in D$ be an arbitrary vertex. We distinguish between 
the following two cases:
\begin{enumerate}[(i)]

\item \label{I1i} \textbf{$v$ is taken.} Let $v'_1, \ldots, v'_r$ be the new neighbors
of $v' := f^{-1}(v)$ in $T$. In her next $r$ moves, Maker claims the edges of $\{v v_i : 1 \leq i \leq r\}$, 
where $v_1, \ldots, v_r$ are $r$ arbitrary available vertices. Subsequently, Maker updates $S$, $D$ and $f$ by 
adding $v'_1, \ldots, v'_r$ to $S$, deleting $v$ from $D$ and setting $f(v'_i) = v_i$ for every $1 \leq i \leq r$.

\item \label{I1ii} \textbf{$v$ is available.} This case is further divided into the following three subcases:
\begin{enumerate}[(a)]
\item \label{I1iia} There exists a vertex $u \in f({\mathcal O}_T)$ such that the edge $uv$ is free. 
Maker claims $uv$ and updates $S$ and $f$ by adding $v'$ to $S$ and setting $f(v') = v$, where 
$v' \in N_T(f^{-1}(u))$ is an arbitrary new vertex. If $v'$ is a leaf of $T$, then Maker deletes $v$
from $D$.

\item \label{I1iib} There are two vertices $u, w \in f({\mathcal O}_T)$ and new vertices $u_1, u_2, w_1, w_2
\in V(T) \setminus S$ such that $f^{-1}(u) u_1, u_1 u_2, f^{-1}(w) w_1, w_1 w_2 \in E(T)$.
Let $z$ be an available vertex such that the edges $zv$, $zu$ and $zw$ are free. Maker claims the edge $zv$ and
after Breaker's next move she claims $zu$ if it is free and $zw$ otherwise. Assume that Maker claims $zu$ (the
complementary case in which she claims $zw$ is similar). She then updates $S$ and $f$ by adding $u_1$ and $u_2$ 
to $S$ and setting $f(u_1) = z$ and $f(u_2) = v$. If $u_2$ is a leaf of $T$, then Maker deletes $v$ from $D$.

\item \label{I1iic} There exists a vertex $u \in f({\mathcal O}_T)$ and new vertices 
$x', y', z' \in V(T) \setminus S$ such that $f^{-1}(u) x', x'y', y'z' \in E(T)$.
Maker claims a free edge $vw$ for some $w \in A$. Immediately after Breaker's next move,
let $x$ be an available vertex such that the edges $xu$, $xv$ and $xw$ are free. Maker 
claims the edge $xu$ and after Breaker's next move she claims $xw$ if it is free and $xv$ 
otherwise. Assume that Maker claims $xw$ (the complementary case in which she claims $xv$ 
is similar). She then updates $S$ and $f$ by adding $x'$, $y'$ and $z'$ to $S$ and setting
$f(x') = x$, $f(y') = w$ and $f(z') = v$. If $z'$ is a leaf of $T$, then Maker deletes $v$
from $D$.
\end{enumerate}
\end{enumerate}
\item \label{I2} If $D = \emptyset$, then Maker claims an arbitrary edge $uv$, where 
$u \in f({\mathcal O}_{T'})$ and $v \in A$. Subsequently, she updates $S$ and $f$ by 
adding $v'$ to $S$ and setting $f(v') = v$, where $v' \in N_{T'}(f^{-1}(u))$ is an
arbitrary new vertex.  
\end{enumerate}

As soon as $V(T') \setminus S = D = \emptyset$, Stage I is over and Maker proceeds to Stage II.

\textbf{Stage II:} Let $H$ be the bipartite graph with parts $A$ and $f({\mathcal O}_T)$ and
edge set $E(H) = \{uv \in E(K_n) \setminus E(B): u \in A, \, v \in f({\mathcal O}_T)\}$. 
Maker builds a perfect matching of $H$ within $|A| + 2$ moves, following the strategy whose 
existence is ensured by Lemma~\ref{FastPerMatBP}.

It is evident that if Maker can follow the proposed strategy without forfeiting the game, then 
she wins the game within $n+1$ moves. It thus suffices to prove that Maker can indeed do so. 
We consider each of the two stages separately.

\textbf{Stage I:} We begin by proving the following three claims.

\begin{claim} \label{claim1}
At most $\frac{2n}{\Delta^{k+1}}$ vertices become dangerous throughout Stage I.
\end{claim}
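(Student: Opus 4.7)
The plan is to use a simple double-counting argument on Breaker's edges. The key observation is a monotonicity property: once a vertex $v$ satisfies $d_B(v) \geq \Delta^{k+1}$, it continues to do so forever, since Breaker never loses edges. Moreover, one should check that a vertex can transition into being dangerous only by having its Breaker-degree reach $\Delta^{k+1}$ while it is available or open; indeed, a closed vertex never becomes open or available again, and any available vertex with $d_B(v) \geq \Delta^{k+1}$ is already dangerous, so subsequently embedding it (which turns it into an open vertex) does not create a \emph{new} dangerous vertex. Hence every vertex that ever becomes dangerous during Stage~I still satisfies $d_B(v) \geq \Delta^{k+1}$ at the end of this stage.

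Next, I would bound the number of edges Breaker claims in Stage~I. A quick case analysis shows that each application of rules~(\ref{I1}) and~(\ref{I2}) embeds exactly as many new vertices as the number of Maker moves it consumes; combined with the single vertex $v'$ embedded for free at the start, this means Maker plays exactly $|V(T'')|-1$ moves in Stage~I, where $T''$ is the tree she has embedded by its end. Since $|V(T'')| \leq n - \varepsilon n/2 \leq n$ and Breaker plays at most once between any two consecutive Maker moves, Breaker claims at most $n$ edges throughout Stage~I.

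Putting the two ingredients together, if $N$ denotes the number of vertices that become dangerous at some point during Stage~I, then the monotonicity step gives
\[
N \cdot \Delta^{k+1} \leq \sum_{v \in V(K_n)} d_B(v) = 2\, e(B) \leq 2n
\]
at the end of Stage~I, which yields $N \leq 2n/\Delta^{k+1}$ as claimed. The argument is entirely elementary; the main thing to verify carefully is the monotonicity claim above, which is what lets one equate the vertices that were ever dangerous with the vertices of Breaker-degree at least $\Delta^{k+1}$ at the end of Stage~I.
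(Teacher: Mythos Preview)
Your proof is correct and follows the same approach as the paper's: both bound Breaker's total number of edges in Stage~I by $n$ (since Maker plays $|V(T'')|-1 \leq n$ moves) and then use the degree-sum inequality $N \cdot \Delta^{k+1} \leq 2\,e(B)$. You have simply made explicit the monotonicity step (that any vertex which was ever dangerous still has Breaker-degree at least $\Delta^{k+1}$ at the end of Stage~I), which the paper's two-line argument leaves implicit.
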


\begin{proof} Stage I of the proposed strategy lasts $|V(T'')| - 1 \leq n$ moves.
Since, moreover, a dangerous vertex has degree at least $\Delta^{k+1}$ in Breaker's graph, 
it follows that there can be at most $\frac{2n}{\Delta^{k+1}}$ such vertices.
\end{proof}

\begin{claim} \label{claim2}
The following two properties hold at any point during Stage I.
\begin{enumerate}[$(1)$]
\item $|A| \geq \varepsilon n/2$;
\item $d_B(v) \leq \varepsilon n/(10 \Delta)$ holds for every vertex $v \in A \cup f({\mathcal O}_T)$.
\end{enumerate}
\end{claim}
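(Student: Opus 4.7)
Both parts are proved together by induction on the number of moves played so far in Stage~I. Two facts will be used repeatedly. First, each Maker-move in Stage~I embeds exactly one new vertex of $T$ (inspect Case~(2) and every subcase of Case~(1): $r$ moves embed $r$ vertices in~(i), one vertex in~(ii)(a), the vertices $u_1,u_2$ in~(ii)(b), and $x',y',z'$ in~(ii)(c)), so $|S|$ grows by exactly $1$ per Maker-move. Second, a short case analysis of subcases~(i) and~(ii)(a)--(c) shows that handling a single dangerous vertex takes at most $C:=\Delta+3$ Maker-moves: subcase~(ii) maps the dangerous vertex as the image of some tree-vertex in at most three moves, and a subsequent application of subcase~(i) closes it in at most $\Delta$ further moves. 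Combined with Claim~\ref{claim1}, this implies the total number of Maker-moves spent on Case~(1) during Stage~I is at most $2Cn/\Delta^{k+1}$.

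For part~(1), the total number of Maker-moves in Stage~I is at most $(|V(T')|-1)+2Cn/\Delta^{k+1}$, since Case~(2) is executed at most $|V(T')|-1$ times. Because $|V(T')|\leq n-\varepsilon n$ and the choice $\Delta^k\geq 40(\Delta+3)/\varepsilon$ makes $2(\Delta+3)n/\Delta^{k+1}\leq\varepsilon n/2$, this total is at most $n-\varepsilon n/2-1$, so $|S|\leq n-\varepsilon n/2$ and $|A|=n-|S|\geq\varepsilon n/2$ throughout Stage~I.

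For part~(2), fix a vertex $v\in A\cup f(\mathcal{O}_T)$ at some time $t$. If $v\notin D$, then $d_B(v)<\Delta^{k+1}$ by the definition of dangerousness, and the bound follows because $\Delta^{k+1}$ depends only on $\Delta$ and $\varepsilon$, whereas $\varepsilon n/(10\Delta)$ grows with $n$. If $v\in D$, let $\tau\leq t$ be the time $v$ joined $D$, so that $d_B(v)=\Delta^{k+1}$ at time $\tau$. Since Maker prioritises Case~(1) over Case~(2), every Maker-move between $\tau$ and the moment $v$ leaves $A\cup f(\mathcal{O}_T)$ is devoted to processing some vertex of $D$, and by the bound established above this amounts to at most $2Cn/\Delta^{k+1}$ Maker-moves. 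Breaker plays at most that many moves in the same interval, each contributing at most $1$ to $d_B(v)$, so $d_B(v)\leq\Delta^{k+1}+2Cn/\Delta^{k+1}$, which the choice of $k$ reduces below $\varepsilon n/(10\Delta)$ for $n\geq n_2$ sufficiently large.

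The main obstacle is verifying that Maker can actually play the prescribed edges in each of the subcases~(i) and~(ii)(a)--(c): showing that the required free edges exist uses both part~(1) (so that $|A|$ is large enough to find fresh available vertices, including the intermediate vertices $z$ in~(ii)(b) and~(ii)(c)) and the inductive form of part~(2) (so that Breaker's degrees at the relevant open and available vertices are small enough to rule out every candidate edge). This interdependence is precisely why both assertions have to be maintained simultaneously rather than proved one after the other.
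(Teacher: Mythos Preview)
Your argument is correct and close to the paper's, with two minor points worth flagging.

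For part~(1) you bound the total number of Stage~I moves by (Case~2 moves) $+$ (Case~1 moves) $\le (|V(T')|-1)+2(\Delta+3)n/\Delta^{k+1}$. The paper instead observes that $|S|=|V(T')|+|L'\cap S|$ and bounds $|L'\cap S|$ directly: a leaf $w'\in L'$ can only enter $S$ if either $f(w')$ or its parent's image was dangerous at some point, and distinct leaves in $L'$ have distinct parents, so $|L'\cap S|$ is bounded by the number of dangerous vertices, i.e.\ $2n/\Delta^{k+1}$. Your route gives the slightly weaker bound $|L'\cap S|\le 2(\Delta+3)n/\Delta^{k+1}$ but avoids having to argue the injectivity of the leaf-to-dangerous-vertex correspondence; both land comfortably below $\varepsilon n/2$.

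For part~(2) your argument is the same as the paper's.

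Your final paragraph overstates the interdependence. The paper does \emph{not} run a simultaneous induction: Claims~\ref{claim1}--\ref{claim3} are proved assuming Maker follows the strategy (with the proviso ``unless Maker forfeits''), and only afterwards are they invoked to verify that each prescribed move is actually playable. In particular, Claim~\ref{claim2} itself requires nothing beyond Claim~\ref{claim1} and the move-counting you already did; there is no circularity to break. So while your inductive framing is not wrong, it introduces an obstacle that isn't really there.
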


\begin{proof} Starting with $(1)$, note that $|A| = n - |S|$ and that $|S| = |V(T')| + |L' \cap S|$ 
holds at the end of Stage I. Since $|V(T')| \leq n - \varepsilon n$ it suffices to prove that 
$|L' \cap S| \leq \varepsilon n/2$. Let $w' \in L' \cap S$ be an arbitrary vertex and let $w = f(w')$. 
Since Maker follows the proposed strategy, $D \cap \{w, f(N_T(w'))\} \neq \emptyset$ must have been 
true at some point during Stage I. Using Claim~\ref{claim1} we conclude that 
$$
|L' \cap S| \leq \frac{2n}{\Delta^{k+1}} \leq \frac{\varepsilon n}{2} \,.
$$

Next, we prove $(2)$. Let $v \in A \cup f({\mathcal O}_T)$ be an arbitrary vertex.
If $v$ was never a dangerous vertex, then $d_B(v) < \Delta^{k+1} \leq \varepsilon n/(10 \Delta)$
holds by definition and since $n$ is sufficiently large with respect to $\Delta$ and $k$. Otherwise, for 
as long as $v \in D$, Maker plays according to Case (1) of the proposed strategy. Therefore, unless 
Maker forfeits the game, at some point during Stage I she connects $v$ to her tree (this requires 
zero moves in Case (i), one move in Case (ii)(a), two moves in Case (ii)(b) and three moves in Case (ii)(c)). 
Since $v$ can be removed from $D$ only in Case (i) or if $f^{-1}(v)$ is a leaf of $T$, it follows that, 
unless Maker forfeits the game, at some point during Stage I she closes $v$. According to the proposed 
strategy for Case (i), this requires at most $\Delta$ moves. We conclude that Maker spends at most 
$\Delta + 3$ moves on connecting a dangerous vertex to her tree and closing it. It thus follows by 
Claim~\ref{claim1} that 
$$
d_B(v) \leq \Delta^{k+1} + (\Delta + 3) \cdot \frac{2n}{\Delta^{k+1}} \leq 
\Delta^{k+1} + \frac{\varepsilon \Delta^k}{40} \cdot \frac{2n}{\Delta^{k+1}} 
\leq \frac{\varepsilon n}{10 \Delta} \,,
$$
where the last inequality holds since $n$ is sufficiently large with respect to $\Delta$ and $k$.
\end{proof}

\begin{claim} \label{claim3}
At any point during Stage I, if $D \neq \emptyset$ and $v \in D$ is available, then at least one of the conditions (a), (b) or (c) of
Case (1)(ii) must hold.
\end{claim}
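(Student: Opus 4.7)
The plan is to argue by contradiction: assume that at some moment of Stage I an available vertex $v \in D$ exists for which none of (a), (b), (c) holds, and derive a contradiction by combining the lower bound $|A| \geq \varepsilon n/2$ from Claim~\ref{claim2}(1) with a structural upper bound on $|A| = |V(T) \setminus S|$ implied by the failure of (b) and (c).

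The first step is to use the failure of (a) to bound the number of open vertices. Since $v \in A$ is unclaimed by Maker, the edge $uv$ is free if and only if $uv \notin E(B)$; if (a) fails then every open vertex $u$ satisfies $uv \in E(B)$, so $|{\mathcal O}_T| \leq d_B(v) \leq \varepsilon n /(10 \Delta)$ by Claim~\ref{claim2}(2). The second step analyzes the forest $T[N]$, where $N = V(T) \setminus S$. A short induction on Maker's moves shows that $S$ stays connected in $T$ throughout Stage I: each vertex added to $S$ is a $T$-neighbor of a vertex already in $S$. Since $T$ is a tree, every component of $T[N]$ is a subtree attached to $S$ through a unique edge, whose $S$-endpoint is some $u'$ with $f(u') \in {\mathcal O}_T$. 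The failure of (c) means that no open vertex $u$ admits a $T$-path of three consecutive new vertices emanating from $f^{-1}(u)$, so every component of $T[N]$ has depth at most $2$ from its attachment point. The failure of (b) forces that at most one open vertex has an attached component of depth $2$; for every other open vertex, the attached components are singletons which, by the connectedness argument, must be $T$-leaves.

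Counting accordingly, the components attached to the exceptional open vertex contribute at most $\Delta + \Delta(\Delta - 1) = \Delta^2$ new vertices (at most $\Delta$ children, each with at most $\Delta - 1$ further new children, all $T$-leaves), while each other open vertex contributes at most $\Delta$. Hence $|N| \leq \Delta^2 + \Delta \cdot |{\mathcal O}_T| \leq \Delta^2 + \varepsilon n/10$. Since $|A| = |N|$, this contradicts $|A| \geq \varepsilon n/2$ once $n$ is sufficiently large with respect to $\Delta$ and $m_2$, which holds by our assumption on $n_2$. The main obstacle is the structural description of $T[N]$ — specifically, verifying that $S$ remains connected in $T$ under Maker's strategy (so each component of $T[N]$ hangs off a single open vertex) and that the absence of a grandchild of an open vertex forces all of its new children to be $T$-leaves (using that $T$ is a tree of maximum degree $\Delta$). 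Once these points are in hand, the counting is immediate. I would also note that the existence of the auxiliary vertex $z$ required in condition (b) is automatic whenever its structural part holds: $|A|$ exceeds the number of $z \in A$ that fail $zv, zu, zw \notin E(B)$ by Claim~\ref{claim2}, so only the structural portion of (b) needs to be established.
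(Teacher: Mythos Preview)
Your proof is correct and follows essentially the same approach as the paper: assume none of (a), (b), (c) hold, use the failure of (a) together with Claim~\ref{claim2}(2) to bound $|{\mathcal O}_T|$, use the failure of (b) and (c) to show that the new vertices form a very shallow forest hanging off the open vertices, and then count to contradict Claim~\ref{claim2}(1). The paper phrases the structural step via the partition ${\mathcal O}_T = ({\mathcal O}_T \cap N_T(L)) \cup ({\mathcal O}_T \setminus N_T(L))$ rather than via components of $T[V(T)\setminus S]$, but the content is the same; your explicit verification that $S$ remains connected in $T$ (which the paper uses implicitly) is a welcome addition.
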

\begin{proof}
Suppose for a contradiction that none of (a), (b) and (c) hold. Since (a) does not hold and since 
$d_B(v) \leq \varepsilon n/(10 \Delta)$ holds by Part (2) of Claim~\ref{claim2}, it follows that 
$|N_T(L) \cap {\mathcal O}_T| \leq |{\mathcal O}_T| \leq \varepsilon n/(10 \Delta)$. Since (b) does not hold, 
it follows that $|{\mathcal O}_T \setminus N_T(L)| \leq 1$. Finally, since (c) does not hold, it follows that 
if $x \in {\mathcal O}_T \setminus N_T(L)$, then $x \in N_T(N_T(L))$. Therefore 
\begin{eqnarray*}
|A| &\leq& |N_T(L) \cap {\mathcal O}_T| \cdot \Delta + |{\mathcal O}_T \setminus N_T(L)| \cdot (\Delta + \Delta^2) \\
&\leq& \varepsilon n/(10 \Delta) \cdot \Delta + 1 \cdot (\Delta + \Delta^2) \\ 
&<& \varepsilon n/2 \,,
\end{eqnarray*}
contrary to Part $(1)$ of Claim~\ref{claim2}.
\end{proof}

Next, we consider each case of Stage I separately and prove that Maker
can follow the proposed strategy for that case.
\begin{enumerate}[$(1)$]
\item In this case $D \neq \emptyset$. Let $v \in D$ be an arbitrary vertex.
\begin{enumerate}[$(i)$]
\item For as long as $v$ is open we have $d_B(v) \leq \varepsilon n/(10 \Delta) < \varepsilon n/2 - 2\D \leq |A| - 2\D$,
where the first inequality holds by Part (2) of Claim~\ref{claim2} and the last inequality 
holds by Part (1) of Claim~\ref{claim2}. Maker can thus close $v$ as instructed by the proposed strategy
for this case.  

\item In this case (and all of its subcases) $v$ is available.
\begin{enumerate} [$(a)$]
\item It readily follows by its description that Maker can follow the proposed strategy
for this subcase.
\item Let $u$ and $w$ be open vertices as described in the proposed strategy for this subcase.
It follows by Parts (1) and (2) of Claim~\ref{claim2} that 
$$
d_B(v) + d_B(u) + d_B(w) \leq 3 \varepsilon n/(10 \Delta) < \varepsilon n/2 \leq |A| \,.
$$
We conclude that there exists a vertex $z \in A$ such that the edges $zv$, $zu$ and $zw$
are free.

\item Similarly to Case (i) above, there exists a vertex $w \in A$ such that the edge $vw$ is free.
Similarly to case (ii)(b) above, there exists a vertex $z \in A$ such that the edges $zv$, $zu$ 
and $zw$ are free.
\end{enumerate}

\end{enumerate}

\item Since $D = \emptyset$ and yet Stage I is not over, it follows that $V(T') \setminus S
\neq \emptyset$. It follows that $\open_{T'} \neq \emptyset$. Let $u \in f(\open_{T'})$ be an 
arbitrary vertex. Since $D = \emptyset$, it follows that $d_B(u) < \Delta^{k+1} < \varepsilon n/2
\leq |A|$, where the last inequality follows from Part (1) of Claim~\ref{claim2}.
We conclude that there exists a vertex $v \in A$ such that $uv$ is free. 
\end{enumerate}

\textbf{Stage II:} Since $D = \emptyset$ holds at the end of Stage I, it follows
that $\delta(H) \geq |A| - \Delta^{k+1}$. Since, moreover, $n$ is sufficiently large
and $|A| \geq \varepsilon n/2$ holds by Part (1) of Claim~\ref{claim2}, it follows
by Lemma~\ref{FastPerMatBP} that Maker has a strategy to win the perfect matching game,
played on $E(H)$, within $|A| + 2$ moves.  

At the end of Stage I, Maker's graph is a tree isomorphic to $T''$. Hence,
Stage I lasts exactly $|V(T'')| - 1$ moves. By Lemma~\ref{FastPerMatBP}, Stage II
lasts at most $|A| + 2 = |V(T)| - |V(T'')| + 2$ moves. We conclude that the entire  
game lasts at most $|V(T)| + 1 = n + 1$ moves.
{\hfill $\Box$ \medskip\\}

\section{Building trees in optimal time} \label{sec::RandomTree}

In this section we will prove Theorems~\ref{th::randomTree} and~\ref{th::PathFromLeaf}. A central ingredient in the proofs 
of both theorems is Maker's ability to build a Hamilton path with some designated vertex as an endpoint in optimal time. 
Our strategy for building a path quickly is based on the proof of Theorem 1.4 from~\cite{HKSS}. In particular, the
first step is to build a perfect matching. 

\begin{lemma} \label{lem::MissingEdgesPM}
For every sufficiently large integer $r$ there exists an integer $n_0 = n_0(r)$ such that 
for every even integer $n \geq n_0$ and every graph $G$ with $n$ vertices and
$e(G) \geq \binom{n}{2} - n + r$ edges, Maker has a strategy to win the perfect matching 
game, played on $E(G)$, within $n/2 + 1$ moves.
\end{lemma}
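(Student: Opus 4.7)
The plan is to adapt the strategy used in~\cite{HKSS} to prove Theorem~\ref{PMKn}. Let $\overline{G} := K_n \setminus G$ and note that, by hypothesis, $e(\overline{G}) \leq n - r$. Conceptually, Maker plays the perfect matching game on $K_n$, pretending that Breaker has pre-claimed all edges of $\overline{G}$; write $\hat B := B \cup \overline{G}$ for the ``effective Breaker graph''. At any stage let $S$ denote the set of vertices not yet saturated by Maker's matching $M$, and for $v \in S$ call $d_{\hat B}(v, S)$ the \emph{danger} of $v$.

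Maker's strategy is the natural danger-reduction strategy, parametrised by a large constant $C = C(r)$. At each of her turns she performs a \emph{safe move}: if some $v \in S$ has danger at least $C$, she selects the $v$ of maximum danger and claims a free edge $vu$ of $G$ with $u \in S$ chosen to have small danger; otherwise she claims an arbitrary free edge between two low-danger vertices of $S$. In either case both $u$ and $v$ leave $S$, so if Maker can follow the strategy for $n/2$ consecutive such moves she wins. She is allowed at most one additional wasted move, accommodating a single configuration-dependent adjustment.

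To control the game I would track the potential
\[
\psi \;:=\; \sum_{v \in S} \max\bigl\{0,\, d_{\hat B}(v, S) - C\bigr\},
\]
together with a secondary count of the number of vertices of $S$ whose danger is unsafely large. Standard bookkeeping as in the proof of Theorem~\ref{TreesWithLongPath} (especially the style of Claim~\ref{claim2}) shows that each Breaker move increases $\psi$ by at most $2$, each dangerous Maker move removes the large contribution of the targeted vertex $v$ and furthermore decreases $d_{\hat B}(w, S)$ for every $\hat B$-neighbour $w \in S$ of the removed pair, and each non-dangerous Maker move does not increase $\psi$. Legality of the move follows because, provided $C$ is sufficiently large in terms of $r$ and $n$ is sufficiently large in terms of $C$, the number of candidate partners for a chosen $v$ is at least $|S| - d_{\hat B}(v, S) - 1 \geq n/2 - (C + \psi) - 1 > 0$.

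The main obstacle is absorbing the initial burst of danger contributed by $\overline{G}$. Although $\sum_{v} d_{\overline{G}}(v) = 2e(\overline{G})$ may be as large as $2(n - r)$, the number of vertices of $\overline{G}$-degree at least $C$ is at most $2(n-r)/C$, which is small once $C$ is fixed to be large, and matching such a vertex in a single Maker move removes its entire contribution to $\psi$. Quantifying this ``rich-vertex elimination'', and amortising it against Breaker's slow accumulation, one shows the total number of wasted moves is at most $1$, matching the slack already present in the [HKSS] argument and giving Maker a win within $n/2 + 1$ moves. The hypothesis that $r$ is sufficiently large enters precisely at this point: it allows the thresholds $C$ and $n_0$ to be chosen so that the contribution of $\overline{G}$ is fully absorbed without spending more than the single extra move permitted.
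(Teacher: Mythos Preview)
Your outline diverges substantially from the paper's argument, and as written it has a real gap in the endgame.

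The paper does not run a vertex-danger potential. It defines the danger of a \emph{free edge} $e$ in $G[S]$ as the number of edges of $Br=((K_n\setminus G)\cup B)[S]$ meeting $e$, and proceeds in two clean stages. In Stage~I Maker repeatedly claims a free edge of danger $\ge 3$; one checks that $e(Br)\le v(Br)-2$ is maintained and that this stage lasts at most $(n-r)/2$ moves, so $|S|\ge r$ at its end. The heart of the proof is a structural claim (Claim~\ref{cl::partition}): once every free edge has danger $\le 2$, the graph $Br$ on $S$ is so sparse and locally constrained that $S$ admits a balanced bipartition $A\cup B$ with $e_{Br}(A,B)\le 1$. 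Stage~II then hands the remaining game on $K_{|S|/2,|S|/2}$ minus one edge to Theorem~\ref{th::PM}, which supplies the perfect matching in $|S|/2+1$ moves. The single wasted move is thus inherited from the bipartite result, not produced by any ad hoc ``adjustment''.

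Your sketch never reaches a comparable endgame. The legality bound you write, $|S|-d_{\hat B}(v,S)-1\ge n/2-(C+\psi)-1>0$, cannot be what you mean: $|S|=n-2t$ shrinks to zero, so once $|S|\le C+\psi+1$ the inequality fails and you have given no argument that a free edge inside $S$ even exists, let alone that the remaining vertices can be perfectly matched with only one extra move. Controlling $\psi$ keeps individual dangers below $C$, but when $|S|$ itself drops to a constant that is no help; this is precisely the regime where the~\cite{HKSS} argument needs its delicate last few moves, and adding $\Theta(n)$ missing edges from $\overline{G}$ only makes that regime harder. The paper sidesteps the difficulty entirely by stopping Stage~I while $|S|\ge r$ is still large and invoking a black-box bipartite matching result; your proposal needs either an analogous reduction or a genuine analysis of the last $O(C)$ rounds, and the latter is where the ``$+1$'' actually lives. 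The amortisation paragraph (``rich-vertex elimination \ldots at most one wasted move'') asserts exactly the conclusion without supplying that analysis.
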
 

\begin{proof}
The following notation and terminology will be used throughout this proof. 
At any point during the game, let $S$ denote the set of vertices of $G$ 
which are isolated in Maker's graph. Let $Br = ((K_n \setminus G) \cup B)[S]$. 
For every free edge $e \in G[S]$, let $D(e) = |\left\{f \in E(Br) : e \cap f \neq \emptyset \right\}|$
denote the \emph{danger} of $e$.

We present a strategy for Maker and then prove that it
allows her to build a perfect matching of $G$ within $n/2 + 1$ moves. At any point during 
the game, if Maker is unable to follow the proposed strategy, then she forfeits the game. 
The strategy is divided into the following two stages.

\textbf{Stage I:} If there exists a free edge $e \in G[S]$ such that $D(e) \geq 3$,
then Maker claims an arbitrary such edge and repeats Stage I. Otherwise, she proceeds
to Stage II. 

\textbf{Stage II:} Maker builds a perfect matching of $G[S]$ within $|S|/2 + 1$ moves. 

It is evident that, if Maker can follow the proposed strategy without forfeiting the game, 
then she wins the perfect matching game, played on $E(G)$, within $n/2 + 1$ moves. 
It thus suffices to prove that she can indeed do so. 

It is clear by its description that Maker can follow Stage I of the proposed strategy
without forfeiting the game. In order to prove that she can also follow Stage II of the 
proposed strategy, we first prove the following three claims.

\begin{claim} \label{cl::freeEdgesStageI}
$e(Br) \leq v(Br) - 2$ holds at any point during Stage I.
\end{claim}

\begin{proof}
The required inequality holds before and immediately after Breaker's first move since $e(Br) \leq e(K_n \setminus G) + 1 
\leq n - r + 1 \leq n - 2 = v(Br) - 2$ holds at that time, where the second inequality holds by assumption and the third 
inequality holds since $r \geq 3$. Assume that this inequality holds immediately after Breaker's $j$th move for
some positive integer $j$. If Maker plays her $j$th move in Stage I, then she claims an edge $e \in G[S]$
such that $D(e) \geq 3$. This decreases $v(Br) = |S|$ by $2$ and $e(Br)$ by at least $3$. It follows that
$e(Br) \leq v(Br) - 3$ holds immediately after Maker's $j$th move. In his $(j+1)$st move, Breaker increases
$e(Br)$ by at most $1$ and does not decrease $v(Br)$. Hence $e(Br) \leq v(Br) - 2$ holds immediately after 
his $(j+1)$st move.  
\end{proof}

\begin{claim} \label{cl::ShortStageI}
Maker plays at most $(n-r)/2$ moves in Stage I.
\end{claim}

\begin{proof}
In each round (that is, a move of Maker and a counter move of Breaker) of Stage I, 
$e(Br)$ is decreased by at least $2$ (it is decreased by $D(e) \geq 3$ in Maker's 
move and then increased by at most $1$ in Breaker's move). The claim now follows since
$e(Br) \geq 0$ holds at any point during the game and $e(Br) \leq e(K_n \setminus G) + 1 
\leq n - r + 1$ holds immediately after Breaker's first move. 
\end{proof}

\begin{claim} \label{cl::partition}
Let $m \geq 6$ be an even integer and let $H = (V,E)$ be a graph on $m$ vertices which satisfies the 
following two properties:
\begin{description}
\item [(i)] $|\{f \in E : e \cap f \neq \emptyset\}| \leq 2$ for every $e \in E(K_m) \setminus E$.
\item [(ii)] For every $u \in V$ there exists a vertex $v \in V$ such that $uv \notin E$.
\end{description}
Then there exists a partition $V = A \cup B$ such that $|A| = |B| = m/2$ and $e_H(A,B) \leq 1$. 
\end{claim}

\begin{proof}
Note that $\Delta(H) \leq 2$. Indeed, suppose for a contradiction that there exist
vertices $u, v_1, v_2, v_3 \in V$ such that $u v_1, u v_2, u v_3 \in E$. It follows by Property
(ii) that there exists a vertex $v_4 \in V$ such that $u v_4 \notin E$. We thus have 
$u v_1, u v_2, u v_3 \in \{f \in E : u v_4 \cap f \neq \emptyset\}$, contrary to Property (i). 

Assume first that $\Delta(H) = 2$ and let $u, v, w \in V$ be such that $uv, uw \in E$. Let $A$
be an arbitrary subset of $V \setminus \{u,v,w\}$ of size $m/2$ (such a set $A$ exists since 
$m \geq 6$) and let $B = V \setminus A$. We claim that $e_H(A,B) = 0$. Indeed, suppose for a 
contradiction that there exist vertices $x \in A$ and $y \in B$ such that $xy \in E$. 
Since $\Delta(H) \leq 2$ and $uv, uw \in E$, it follows that $ux \in E(K_m) \setminus E$. 
However, we then have $uv, uw, xy \in \{f \in E : u x \cap f \neq \emptyset\}$, contrary to 
Property (i).

Assume then that $\Delta(H) \leq 1$, that is, $H$ is a matching. Let $E = \{x_i y_i : 1 \leq i \leq \ell\}$,
where $0 \leq \ell \leq m/2$ is an integer. Let $A = \{x_1, \ldots, x_{\lceil m/4 \rceil}, y_1, \ldots, y_{\lfloor m/4 \rfloor}\}$ 
and let $B = V \setminus A$. Note that $|A| = |B| = m/2$ and that $E_H(A,B) \subseteq 
\{x_{\lceil m/4 \rceil} y_{\lceil m/4 \rceil}\}$ and thus $e_H(A,B) \leq 1$ as claimed.  
\end{proof}

We are now ready to prove that Maker can follow Stage II of the proposed strategy without forfeiting the game.
It follows by the description of Stage I of the proposed strategy that $D(e) \leq 2$ holds for every free edge 
$e \in G[S]$ at the beginning of Stage II. Moreover, it follows by Claim~\ref{cl::freeEdgesStageI} that, immediately 
after Breaker's last move in Stage I, for every $u \in V$ there is a free edge $e$ such that $u \in e$. 
Therefore, the conditions of Claim~\ref{cl::partition} are satisfied (with $H = Br$). Hence, there exists 
a partition $S = A \cup B$ such that $e_{Br}(A,B) \leq 1$. Let $e$ be an edge for which $E_{G[S]}(A,B) \supseteq
E_{K_n}(A,B) \setminus \{e\}$. Maker (being the first to play in Stage II) plays the perfect matching game on
$E_{K_n}(A,B) \setminus \{e\}$. She pretends that she is in fact playing as the second player on $E_{K_n}(A,B)$
and that Breaker has claimed $e$ in his first move. Since $r$ is sufficiently large and $|S| \geq n - 2 (n-r)/2 = r$ 
holds by Claim~\ref{cl::ShortStageI}, it follows by Theorem~\ref{th::PM} that Maker has a strategy to win the perfect 
matching game, played on $E_{K_n}(A,B)$, within $|S|/2 + 1$ moves.
\end{proof}

We will use Lemma~\ref{lem::MissingEdgesPM} to prove the following result.

\begin{lemma} \label{lem::PathOneEndpoint}
There exists an integer $m_0$ such that the following holds for every $m \geq m_0$. 
Let $G$ be a graph with $m$ vertices and $\binom{m}{2} - k$ edges, where $k$ is a non-negative integer. Assume
that $k \leq (m - 25)/2$ if $m$ is odd and $k \leq (m - 28)/2$ if $m$ is even. Let $x$ be an arbitrary vertex 
of $G$. Then, playing a Maker-Breaker game on $E(G)$, Maker has a strategy to build in $m - 1$ moves a Hamilton 
path of $G$ such that $x$ is one of its endpoints. 
\end{lemma}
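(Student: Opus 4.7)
The plan is to adapt the two-stage strategy from the proof of Theorem~1.4 in~\cite{HKSS}, modified so that $x$ ends up as a designated endpoint of the resulting Hamilton path.

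\emph{Stage~I (matching).} If $m$ is even, Maker applies Lemma~\ref{lem::MissingEdgesPM} to $G$ to build a perfect matching in $m/2 + 1$ moves. The required slack is provided by $k \leq (m-28)/2$, which forces $e(G) \geq \binom{m}{2} - m + r$ with $r = (m+28)/2$ arbitrarily large. Since the strategy of Lemma~\ref{lem::MissingEdgesPM} only plays edges between two currently isolated vertices, Maker's graph after Stage~I is a disjoint union of paths (a perfect matching together with one extra edge turning two matching edges into a path of length three), amounting to exactly $m/2 - 1$ components. If $m$ is odd, Maker applies Lemma~\ref{lem::MissingEdgesPM} to $G - x$ instead, which has at least $\binom{m-1}{2} - k$ edges; the bound $k \leq (m-25)/2$ again provides enough slack. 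After $(m-1)/2 + 1$ moves Maker has $(m-3)/2$ paths on $V(G) \setminus \{x\}$ together with the isolated vertex $x$, giving $(m-1)/2$ components in total.

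\emph{Stage~II (merging).} In either parity Maker enters Stage~II with $c$ components and exactly $c - 1$ moves left (namely $m/2 - 2$ or $(m-3)/2$). Each move merges two components via a free edge of $G$ between two of their endpoints. To keep $x$ as an endpoint, Maker never plays at $x$ once $x$ lies in a component of size at least two; in the odd case the first move that touches $x$ simply connects it to another component, after which $x$ remains an endpoint automatically. If at each step she can find a legal free merge edge, she completes a Hamilton path with endpoint $x$ in $c - 1$ further moves, for a total of $m - 1$.

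The main obstacle is verifying that a legal free merge edge is always available after every Breaker move. A direct count shows that with $p$ components remaining there are at least $2(p-1)^2$ candidate non-$x$ merge edges (and at least two legal ones in the final round $p = 2$, regardless of the status of $x$'s component), while the number of edges missing from $G$ plus those claimed by Breaker so far is bounded by $k + m - 1 = O(m)$. Hence a greedy merge is available while $p \gg \sqrt{m}$. The delicate regime is the final constantly-many merges, where the precise constants $25$ and $28$ are calibrated; here one argues more carefully, for instance by having Maker reserve merge candidates in advance or by maintaining a large matching in an auxiliary ``merge graph'' on the current endpoints, in the spirit of the endgame analysis of~\cite{HKSS}.
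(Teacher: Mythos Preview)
Your overall architecture (first build a near-matching via Lemma~\ref{lem::MissingEdgesPM}, then merge paths) matches the paper's, but the merging stage is where essentially all the work lies, and this is precisely the part you defer. Your counting argument gives a free merge edge only while the number of components $p$ satisfies $2(p-1)^2 > k + m$, i.e.\ while $p \gg \sqrt{m}$; for smaller $p$ you write that ``one argues more carefully, for instance by \ldots''. That is not a proof, and the difficulty is genuine: if Maker merges greedily without controlling \emph{which} merge edge she takes, Breaker can concentrate his moves among the current endpoints so that by the time $p$ is constant the auxiliary ``bad'' graph on $End$ has more edges than there are legal merges. In particular your claim that there are ``at least two legal ones in the final round $p=2$'' says nothing about whether either is free, and the global count $k+m-1$ swamps the two candidates. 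The constants $25$ and $28$ are exactly the output of a careful endgame analysis, not an input one can appeal to.

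The paper handles this with a five-stage strategy carrying explicit invariants. After the matching stage it sets $P_0=\{x\}$ and defines the graph $Br$ on the current endpoints. In a first merging phase Maker always claims an available endpoint-edge of \emph{danger} at least~$3$; a potential argument (Claims~\ref{cl::lengthOfStageII}--\ref{cl::endOfStageII}) shows this keeps $e(Br)\le |End|-3$ throughout and terminates with $|End|\ge (m-2k-7)/2$ and $Br$ either a matching, a subgraph of $K_3$, or a specific $C_4$. Two further moves reduce to $\Delta(Br)\le 1$; then a phase maintaining $d_{Br}(p_0)=0$ and $\Delta(Br)\le 1$ brings the number of paths down to three; a final two-move case analysis finishes. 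The bounds on $k$ are exactly what make $|End|\ge 7$ survive into the penultimate phase. None of this is recoverable from a bare greedy merge, and your proposal does not supply a substitute.

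Two smaller points. First, your assertion that the strategy of Lemma~\ref{lem::MissingEdgesPM} ``only plays edges between two currently isolated vertices'' cannot be literally true: it uses $n/2+1$ moves on $n$ vertices, so at least one move is not of that form. What one actually gets (and what the paper uses) is one path of length~$3$ together with a matching. Second, in your even-$m$ case you apply the matching lemma to all of $G$, so $x$ may end up as an \emph{interior} vertex of the length-$3$ path, after which your rule ``never play at $x$'' cannot rescue it as an endpoint. The paper avoids this by first claiming a single edge $xx'$ and then reducing to the odd case on $V(G)\setminus\{x\}$; the shift $28\mapsto 25$ in the hypothesis is precisely calibrated for this reduction.
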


\begin{proof}
The following notation and terminology will be used throughout this proof. Given paths $P_1 = (v_1 \ldots v_t)$ and 
$P_2 = (u_1 \ldots u_r)$ in a graph $G$ for which $v_t u_1 \in E(G)$, let $P_1 \circ v_t u_1 \circ P_2$ denote the 
path $(v_1 \ldots v_t u_1 \ldots u_r)$. Let $G$ be a graph on $m$ vertices and let $P_0, P_1, \ldots, P_{\ell}$ 
be paths in $G$ where $P_0 = \{p_0\}$ is a special path of length zero and $e(P_i) \geq 1$ for every $1 \leq i \leq \ell$. 
For every $1 \leq i \leq \ell$ let $End(P_i)$ denote the set of two endpoints of the path $P_i$ and let $End = 
\bigcup_{i=1}^{\ell} End(P_i) \cup \{p_0\}$. Let 
$$
X = \left\{uv \in E(K_m) : \{u,v\} \in \binom{End}{2} \textrm{ and } \{u,v\} \neq End(P_i) \textrm{ for every } 
1 \leq i \leq \ell \right\} \,.
$$ 
At any point during the game, let $Br$ denote the graph with vertex set $End$ and edge set
$X \cap (E(K_m \setminus G) \cup E(B))$. The edges of $X \setminus E(Br)$ are called \emph{available}. 
For every available edge $e$, let  $D(e) = |\left\{f \in E(Br) : e \cap f \neq \emptyset \right\}|$ 
denote the \emph{danger} of $e$.

Without loss of generality we can assume that $m$ is odd (otherwise, in her first move, Maker claims an arbitrary free 
edge $x x'$ and then plays on $(G \setminus B)[V(G) \setminus \{x\}]$ with $x'$ as the designated endpoint; note that 
$k \leq (m - 28)/2 \Longrightarrow k+1 \leq [(m-1) - 25]/2$). 

We present a strategy for Maker and then prove that it allows her to build the required path in $m - 1$ moves. 
At any point during the game, if Maker is unable to follow the proposed strategy, then she forfeits the game. 
The strategy is divided into the following five stages.

\textbf{Stage I:} Maker builds paths $P_1, \ldots, P_{(m-3)/2}$ in $G \setminus \{x\}$ which satisfy the following three properties:
\begin{description}
\item [(a)] $e(P_1) = 3$.
\item [(b)] $e(P_i) = 1$ for every $2 \leq i \leq (m-3)/2$.
\item [(c)] $V(P_i) \cap V(P_j) = \emptyset$ for every $1 \leq i < j \leq (m-3)/2$. 
\end{description}
This stage lasts exactly $(m-1)/2 + 1$ moves. As soon as it is over, Maker proceeds to Stage II.

\textbf{Stage II:} Let $p_0 = x$, let $P_0 = \{p_0\}$, let $\ell = (m-3)/2$ and let ${\mathcal P} = \{P_0, P_1, \ldots, P_{\ell}\}$. 
For every $i \geq (m-1)/2 + 2$, immediately before her $i$th move, Maker checks
whether there exists an available edge $e \in X \setminus E(Br)$ such that $D(e) \geq 3$. If there is 
no such edge, then this stage is over and Maker proceeds to Stage III. Otherwise, in her $i$th move, Maker claims an 
arbitrary such edge $uv$. She then updates ${\mathcal P}$ as follows. Let $0 \leq i < j \leq \ell$ denote the unique
indices for which $u \in V(P_i)$ and $v \in V(P_j)$. Maker deletes $P_j$ from ${\mathcal P}$. Moreover, If $i \geq 1$, then 
she replaces $P_i$ with $P_i \circ uv \circ P_j$ (which is now referred to as $P_i$) and if $i = 0$, then she sets 
$p_0 = z$, where $z$ is the unique vertex in $End(P_j) \setminus \{v\}$. In both cases the set $X$ is updated accordingly.    

\textbf{Stage III:} If $\Delta(Br) \leq 1$, then this stage is over and Maker proceeds to Stage IV.  
Otherwise, she claims an available edge $u u'$, where $u \in End$ is an arbitrary vertex of degree at least 2 in $Br$.
Maker then updates ${\mathcal P}$ and $X$ as in Stage II and repeats Stage III. 

\textbf{Stage IV:} In her first move in this stage, Maker plays as follows. If there exists a vertex $w \in End$
such that $p_0 w \in E(Br)$, then Maker claims an available edge $wz$. Otherwise, she claims an arbitrary available edge.
In either case she updates ${\mathcal P}$ and $X$ as in Stage II. 

For every $i \geq 2$, before her $i$th move in this stage, Maker checks how many paths are in ${\mathcal P}$. If there are 
exactly 3 paths, then this stage is over and she proceeds to Stage V; otherwise, she plays as follows. Let $uv$ denote
the edge claimed by Breaker in his last move; assume without loss of generality that $u \neq p_0$. If $uv \notin X$,
then Maker claims an arbitrary available edge. Otherwise she claims an available edge $uw$ for some $w \in End
\setminus \{p_0\}$. In either case Maker updates ${\mathcal P}$ and $X$ as in Stage II and repeats Stage IV.

\textbf{Stage V:} Claiming two more edges, Maker connects her 3 paths to a Hamilton path of $G$ such that $x$ is
one of its endpoints.

It is evident that, if Maker can follow the proposed strategy without forfeiting the game, 
then she builds a Hamilton path of $G$ such that $x$ is one of its endpoints in $m - 1$ moves. 
It thus suffices to prove that she can indeed do so. We consider each stage separately.

\textbf{Stage I:} Since $m$ is sufficiently large, $k \leq (m - 25)/2$ and $|V(G) \setminus \{x\}| = m-1$
is even, it follows by Lemma~\ref{lem::MissingEdgesPM} that Maker can follow the proposed strategy for this stage.

\textbf{Stage II:} It follows by its description that Maker can follow the proposed strategy for this stage.

\textbf{Stage III:} In order to prove that Maker can follow the proposed strategy for this stage without
forfeiting the game, we will first prove the following three claims.

\begin{claim} \label{cl::lengthOfStageII}
Maker plays at most $(m + 2k + 3)/4$ moves in Stage II.
\end{claim} 

\begin{proof}
Since Breaker claims exactly $(m-1)/2 + 2$ edges of $G$ before Maker's first move in 
Stage II, it follows that $e(Br) \leq (m-1)/2 + 2 + k$ holds at that point.
In each round (that is, a move of Maker and a counter move of Breaker) of Stage II, 
$e(Br)$ is decreased by at least $2$ (it is decreased by $D(e) \geq 3$ in Maker's 
move and then increased by at most $1$ in Breaker's move). The claim now follows since
$e(Br) \geq 0$ holds at any point during the game. 
\end{proof}

\begin{claim} \label{cl::freeEdgesStageII}
$e(Br) \leq |End| - 3$ holds at any point during Stage II.
\end{claim}

\begin{proof}
At the end of Stage I, Maker's graph consists of $(m-5)/2$ paths of length 1 each, 
1 path of length 3, and 1 special path $P_0 = \{x\}$ of length 0. Hence, $|End| = m - 2$
holds at the beginning of Stage II. Since Breaker claims exactly $(m-1)/2 + 2$ edges of 
$G$ before Maker's first move of Stage II, it follows that $e(Br) \leq (m-1)/2 + 2 + k \leq 
m - 5 = |End| - 3$ holds at that point, where the last inequality holds by the assumed upper bound 
on $k$. Assume that $e(Br) \leq |End| - 3$ holds immediately after Breaker's $j$th move for
some integer $j \geq (m-1)/2 + 2$. If Maker plays her $j$th move in Stage II, then she claims 
an available edge $e$ such that $D(e) \geq 3$. This decreases $|End|$ by $2$ and $e(Br)$ 
by at least $3$. It follows that $e(Br) \leq |End| - 4$ holds immediately after Maker's 
$j$th move. In his $(j+1)$st move, Breaker increases $e(Br)$ by at most $1$ and does not 
decrease $|End|$. Hence $e(Br) \leq |End| - 3$ holds immediately after his $(j+1)$st move.  
\end{proof}

\begin{claim} \label{cl::endOfStageII}
The following three properties hold immediately before Maker's first move of Stage III:
\begin{description}
\item [(i)] $|End| \geq (m - 2k - 7)/2$. 
\item [(ii)] $\Delta(Br) \leq 2$.
\item [(iii)] $Br$ is a matching or a subgraph of $K_3$ or a subgraph of $C_4$ whose vertices 
are $End(P_i) \cup End(P_j)$ for some $1 \leq i < j \leq \ell$.
\end{description} 
\end{claim} 

\begin{proof}
As shown in the proof of Claim~\ref{cl::freeEdgesStageII}, $|End| = m - 2$
holds at the beginning of Stage II. In each of her moves in Stage II, Maker decreases
$|End|$ by exactly 2. Since, by Claim~\ref{cl::lengthOfStageII} Maker plays at most
$(m + 2k + 3)/4$ moves in Stage II, it follows that $|End| \geq (m-2) - (m + 2k + 3)/2 
= (m - 2k - 7)/2$ holds at the end of Stage II; this proves (i).  

Next, we prove (ii). suppose for a contradiction that there are vertices $u, v_1, v_2, v_3 \in End$
such that $u v_1, u v_2, u v_3 \in E(Br)$ at the end of Stage II. It follows by Claim~\ref{cl::freeEdgesStageII} 
that there exists a vertex $v_4 \in End$ such that the edge $u v_4$ is available. Clearly 
$u v_1, u v_2, u v_3 \in \left\{f \in E(Br) : u v_4 \cap f \neq \emptyset \right\}$. Therefore, 
$D(u v_4) \geq 3$ contrary to our assumption that Stage II is over.  

Finally, we prove (iii). It follows by (ii) that $\Delta(Br) \leq 2$. If $\Delta(Br) \leq 1$,
then $Br$ is a matching. Assume then that there are vertices $u, v, w \in End$ such that 
$uv, uw \in E(Br)$. Let $1 \leq i \leq \ell$ be the unique index such that $u \in V(P_i)$ and
let $u' = End(P_i) \setminus \{u\}$. We claim that $d_{Br}(z) = 0$ for every $z \in End \setminus 
\{u, v, w, u'\}$. Indeed, suppose for a contradiction that there exist vertices 
$z \in End \setminus \{u,v,w,u'\}$ and $z' \in End$ such that $zz' \in E(Br)$. Since 
$\Delta(Br) \leq 2$, $z \notin \{u,v,w,u'\}$ and $uv, uw \in E(Br)$, it follows that $uz$ is available. 
However, we then have $uv, uw, zz' \in \{f \in E(Br) : u z \cap f \neq \emptyset\}$. Therefore, 
$D(uz) \geq 3$ contrary to our assumption that Stage II is over. If $d_{Br}(u') = 0$ as well,
then $E(Br) \subseteq \{uv,uw,vw\}$, that is, $Br$ is a subgraph of $K_3$. Assume then without 
loss of generality that $u'w \in E(Br)$. Since $\Delta(Br) \leq 2$ holds by (ii), it follows that
$vw \notin E(Br)$. If on the other hand $vw$ is available, then $uv, uw, u'w \in \{f \in E(Br) : 
vw \cap f \neq \emptyset\}$ contrary to our assumption that Stage II is over. It follows that    
$\{v,w\} = End(P_j)$ for some $1 \leq j \leq \ell$ and that $E(Br) \subseteq \{uv, uw, u'v, u'w\}$.
\end{proof}

We can now prove that Maker can follow the proposed strategy for this stage without
forfeiting the game. While doing so we will also show that she plays at most $2$ moves 
in Stage III. It follows by Part (iii) of Claim~\ref{cl::endOfStageII} that, immediately before Maker's first move 
in Stage III, the graph $Br$ is a matching or a subgraph of $K_3$ or a subgraph of $C_4$ whose vertices 
are $End(P_i) \cup End(P_j)$ for some $1 \leq i < j \leq \ell$. In the first case, $\Delta(Br) \leq 1$ and thus 
Maker plays no moves in Stage III. Next, assume that $\{uv, uw\} \subseteq E(Br) \subseteq \{uv, uw, vw\}$ for some 
$u,v,w \in End$. Assume without loss of generality that Maker claims $uy$ in her first move of Stage III. Since 
$e(Br) \leq 3$ holds immediately before this move, it follows by Part (i) of Claim~\ref{cl::endOfStageII} and by
the assumed upper bound on $k$ from Lemma~\ref{lem::PathOneEndpoint} that such an available edge exists. Let $z z'$ 
denote the edge claimed by Breaker in his subsequent move. Note that $E(Br) \subseteq \{vw, z z'\}$ holds at this point. 
If $\{v,w\} \cap \{z,z'\} = \emptyset$, then $Br$ is a matching and Stage III is over. Assume then without loss of 
generality that $v = z$. In her second move of Stage III, Maker claims an available edge $v z''$. Since $e(Br) \leq 2$ 
holds immediately before this move, it follows that such an available edge exists. Clearly, $e(Br) \leq 1$ must hold 
after Breaker's next move. It follows that Maker will not play any additional moves in Stage III. Finally, assume that 
there are indices $1 \leq i < j \leq \ell$ such that $End(P_i) = \{u,u'\}$, $End(P_j) = \{v,v'\}$ and $E(Br) \subseteq 
\{uv, uv', u'v, u'v'\}$. Assume without loss of generality that Maker claims $uy$ in her first move of Stage III. 
Since $e(Br) \leq 3$ holds immediately before this move, it follows that such an available edge exists. Let $z z'$ 
denote the edge claimed by Breaker in his subsequent move. Note that $E(Br) \subseteq \{u'v, u'v', z z'\}$ holds 
at this point. Since $vv' \notin X$, it follows that $zz' \neq vv'$; assume without loss of generality that 
$z \notin \{v,v'\}$. In her second move of Stage III, Maker claims $u' z$ if $z' \neq u'$ and an available edge 
$u' z''$ otherwise. Since $e(Br) \leq 3$ holds immediately before this move, it follows that such an available 
edge exists. Clearly, $e(Br) \leq 1$ must hold after Breaker's next move. It follows that Maker will not play any 
additional moves in Stage III. 

\textbf{Stage IV:} In order to prove that Maker can follow the proposed strategy for this stage without
forfeiting the game, we will first prove the following two claims.

\begin{claim} \label{cl::manyPaths}
At the end of Stage III, Maker's graph consists of at least 4 paths.
\end{claim}

\begin{proof}
It follows by Part (i) of Claim~\ref{cl::endOfStageII} that $|End| \geq (m - 2k - 7)/2$
holds at the end of Stage II. Since, as noted above, Maker plays at most 2 moves in Stage III,
it follows that $|End| \geq (m - 2k - 11)/2 \geq 7$ holds at the end of that stage, where the
last inequality holds by the assumed upper bound on $k$. The claim readily follows.
\end{proof}

\begin{claim} \label{cl::StageIV}
The following two properties hold immediately after each of Maker's moves in this stage:
\begin{description}
\item [(i)] $d_{Br}(p_0) = 0$. 
\item [(ii)] $\Delta(Br) \leq 1$.
\end{description}  
\end{claim}

\begin{proof}
It follows by the description of Stage III of the proposed strategy that Property (ii) holds 
immediately before Maker's first move in Stage IV. It thus follows by the description of Maker's 
first move in this stage, that both properties hold after this move. Assume then that both properties
hold immediately after Maker's $i$th move of this stage for some $i \geq 1$. Let $uv$ denote 
the edge claimed by Breaker in his $i$th move of this stage (recall that Maker is the first 
to play in Stage IV), where $u \neq p_0$. Assume that $uv \in X$ as otherwise there is nothing 
to prove. Note that $d_{Br}(w) \leq 1$ holds for every $w \in End \setminus \{u,v\}$ at this point. 
Unless she forfeits the game, in her $(i+1)$st move of this stage, Maker claims an available edge
$uw$ such that $w \in End \setminus \{p_0\}$. This does not change $p_0$, removes $u$ from $End$
and decreases $d_{Br}(v)$ by 1. It follows that $d_{Br}(v) \leq 1$ and that $d_{Br}(v) = 0$ if 
$v = p_0$. 
\end{proof}

It follows by Claim~\ref{cl::manyPaths} and by the description of the proposed strategy for Stage IV that 
$|End| \geq 7$ holds immediately before each of Maker's moves in Stage IV. It thus follows by Property (ii) 
from Claim~\ref{cl::StageIV} that Maker can follow the proposed strategy for this stage without forfeiting 
the game.

\textbf{Stage V:} It follows by Claim~\ref{cl::manyPaths} and by the description of the proposed strategy for 
Stage IV that Maker's graph consists of exactly 3 paths (one of which is $p_0$) in the beginning of Stage V.
Using Properties (i) and (ii) from Claim~\ref{cl::StageIV}, one can show via a simple case analysis (whose 
details we omit) that, regardless of Breaker's strategy, Maker can claim two available edges such that the 
resulting graph is a Hamilton path with $x$ as an endpoint.   
\end{proof}

We now turn to the proof of Theorem~\ref{th::randomTree} whose main idea is the following.
Similarly to the proof of Theorem~\ref{th::manyLeaves} given in Section~\ref{sec:manyleaves}, 
Maker starts by embedding a tree $T'' \subseteq T$ while limiting Breaker's degrees in certain 
vertices. In contrast to the proof of Theorem~\ref{th::manyLeaves}, where $T \setminus T''$ is 
a matching of linear size, in the current proof $T \setminus T''$ consists of linearly 
many pairwise vertex-disjoint bare paths of length $k$ each, where $k$ is a fixed large constant.
We then embed the paths of $T \setminus T''$, recalling that for each of them, one endpoint
was previously embedded. The main tool used for this latter part is Lemma~\ref{lem::PathOneEndpoint}.

In order to prove Theorem~\ref{th::randomTree} we will require the following results.

\begin{theorem} [Theorem 3 in~\cite{Moon}] \label{th::maxDeg}
Let $T$ be a tree, chosen uniformly at random from the class of all labeled trees on $n$ vertices. Then 
asymptotically almost surely, $\Delta(T) = (1 + o(1)) \log n/ \log \log n$.
\end{theorem}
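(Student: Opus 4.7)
The approach is to use the Pr\"ufer correspondence, which gives a bijection between labeled trees on $[n] := \{1, \ldots, n\}$ and sequences in $[n]^{n-2}$ with the property that the degree of a vertex $v$ equals $1$ plus the number of occurrences of $v$ in the associated sequence. Under this bijection, a uniformly random labeled tree corresponds to a uniformly random sequence, so the coordinates are i.i.d.\ uniform on $[n]$. Writing $X_v$ for the number of occurrences of $v$, the vector $(X_v)_{v \in [n]}$ is multinomial (equivalently, one throws $n-2$ balls independently and uniformly into $n$ bins), and $\Delta(T) = 1 + \max_{v \in [n]} X_v$. The theorem is thereby reduced to the classical balls-in-bins fact that the maximum load is $(1+o(1)) \log n / \log \log n$ a.a.s.

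For the upper bound, fix $\epsilon > 0$, set $k = \lceil (1+\epsilon) \log n / \log \log n \rceil$, and observe that
$$
\Pr[X_v \geq k] \leq \binom{n-2}{k} n^{-k} \leq \frac{1}{k!}.
$$
A union bound over $v \in [n]$ together with Stirling's formula yields
$$
\Pr[\Delta(T) \geq k+1] \leq \frac{n}{k!} = \exp\bigl(\log n - k \log k + k + O(\log k)\bigr) = o(1),
$$
since $k \log k = (1+\epsilon - o(1)) \log n$ in this range.

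For the lower bound, set $k = \lfloor (1-\epsilon) \log n / \log \log n \rfloor$ and let $Y = |\{v : X_v \geq k\}|$. A direct computation gives
$$
\Pr[X_v \geq k] \geq \Pr[X_v = k] = \binom{n-2}{k} n^{-k} (1 - 1/n)^{n-2-k} = (1+o(1))\frac{e^{-1}}{k!},
$$
so $\mathbb{E}[Y] = n \Pr[X_v \geq k] \to \infty$ by the analogous Stirling estimate (now $k \log k = (1-\epsilon - o(1)) \log n$). Since $(X_v)_{v \in [n]}$ is multinomial, the indicators $\mathbf{1}_{X_v \geq k}$ are negatively correlated by the negative association of multinomial trials, so $\mathrm{Var}(Y) \leq \mathbb{E}[Y]$, and Chebyshev's inequality gives $\Pr[Y = 0] \leq \mathrm{Var}(Y)/\mathbb{E}[Y]^2 = o(1)$; hence $\Delta(T) \geq k+1$ a.a.s.

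The conceptual work lies almost entirely in the Pr\"ufer reduction; once the problem has been recast as a maximum-load problem, both tails are routine first and second moment arguments combined with Stirling's formula. The only mildly technical ingredient is the negative correlation of the multinomial indicators, which is needed to make Chebyshev deliver concentration of $Y$ about its mean; this is a standard fact about multinomial vectors and would be the only step requiring citation rather than direct verification.
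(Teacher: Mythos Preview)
Your argument is correct. The Pr\"ufer reduction to a balls-in-bins maximum-load problem is exactly the standard route, and your first- and second-moment computations (together with negative association of the multinomial occupancies) are the right tools for the two tails.

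Note, however, that the paper does not give its own proof of this statement at all: it is quoted verbatim as Theorem~3 of Moon's paper~\cite{Moon} and used as a black box in the proof of Theorem~\ref{th::randomTree}. So there is no ``paper's approach'' to compare against; you have simply supplied a self-contained proof of a cited result. (For what it is worth, Moon's original argument proceeds along essentially the same lines as yours, via the Pr\"ufer bijection.)
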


\begin{lemma} \label{lem::ManyBarePaths}
For every positive integer $k$ there exists a real number $\varepsilon > 0$ such that the following holds for 
every sufficiently large integer $n$. Let $T$ be a tree, chosen uniformly at random from the class of all 
labeled trees on $n$ vertices. Then asymptotically almost surely $T$ is such that there exists a family 
${\mathcal P}$ which satisfies all of the following properties:
\begin{description}
\item [(1)] Every $P \in {\mathcal P}$ is a bare path of length $k$ in $T$.
\item [(2)] $|{\mathcal P}| \geq \varepsilon n$. 
\item [(3)] For every $P \in {\mathcal P}$, one of the vertices in $End(P)$ is a leaf of $T$.
\item [(4)] If $P_1 \in {\mathcal P}$ and $P_2 \in {\mathcal P}$ are two distinct paths, then $V(P_1) \cap V(P_2) = \emptyset$. 
\end{description}
\end{lemma}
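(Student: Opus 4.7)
The plan is to use the Pr\"ufer/Cayley correspondence to count ``long pendant bare branches'' in a uniformly random labeled tree. For a tree $T$ on $[n]$ and a leaf $v_0 \in V(T)$, call the unique maximal path $v_0 v_1 v_2 \ldots$ in $T$ all of whose internal vertices have degree $2$ the \emph{pendant branch rooted at $v_0$}. Let $X = X(T)$ count the ordered tuples $(v_0, v_1, \ldots, v_{k+1})$ of distinct vertices such that $v_0 v_1, v_1 v_2, \ldots, v_k v_{k+1} \in E(T)$, $v_0$ is a leaf, and $v_1, \ldots, v_k$ all have degree $2$ in $T$; equivalently, $X$ is the number of leaves whose pendant branch has length at least $k+1$.

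First I compute the mean. For any fixed such ordered tuple, deleting $v_0, \ldots, v_k$ gives a bijection between the trees $T$ on $[n]$ for which the tuple is a valid pendant branch and the labeled trees on the remaining $n-k-1$ vertices; by Cayley's formula,
\[
\mathbb{E}[X] \;=\; \frac{n(n-1)\cdots(n-k-1)\cdot(n-k-1)^{n-k-3}}{n^{n-2}} \;\sim\; e^{-(k+1)}\,n \;=\; \Theta(n).
\]
For the second moment I split $\mathbb{E}[X^2]$ according to the overlap of the two tuples. Vertex-disjoint pairs contribute $(1+o(1))\mathbb{E}[X]^2$, by the same bijective reasoning applied to trees on the remaining $n-2(k+1)$ vertices. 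For any overlapping pair the combined configuration uses at most $2k+3$ distinct vertices; the key rigidity observation is that a degree-$2$ interior vertex of a pendant branch uniquely determines its two neighbours, so two distinct pendant branches can share vertices only at a common high-degree attachment endpoint. A short case analysis over the possible overlap patterns then bounds the total overlapping contribution by $O(n) = o(\mathbb{E}[X]^2)$, and Chebyshev's inequality yields $X \geq \tfrac{1}{2}\mathbb{E}[X] = \Omega(n)$ asymptotically almost surely.

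Finally I extract the family ${\mathcal P}$. Two distinct leaves give rise to the same pendant branch precisely when they are the two leaf endpoints of a single bare path of $T$, so the number of \emph{distinct} pendant branches of length at least $k+1$ is at least $X/2 = \Omega(n)$. From each such branch I take the length-$k$ subpath consisting of the leaf $v_0$ together with the next $k$ vertices $v_1, \ldots, v_k$; this is a bare path of length $k$ in $T$ with a leaf endpoint, giving Properties~(1) and~(3). Vertex-disjointness (Property~(4)) then follows from the same rigidity observation: each chosen subpath consists of a leaf (unique to its branch) together with $k$ internal degree-$2$ vertices of the corresponding pendant branch, and internal degree-$2$ vertices of distinct pendant branches cannot coincide. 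Taking $\varepsilon = e^{-(k+1)}/4$, say, then yields $|{\mathcal P}| \geq \varepsilon n$ a.a.s.

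The main technical hurdle will be the second moment calculation: in addition to the ``clean'' case of two branches glued at a single attachment vertex, one has to handle degenerate overlaps (such as one tuple's attachment point coinciding with an internal vertex of the other, or two tuples sharing an initial segment), each of which leaves very few free vertices for the remainder of the tree and must be shown via a Cayley-type count to contribute only $O(n)$ to $\mathbb{E}[X^2]$.
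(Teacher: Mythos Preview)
Your approach is correct and self-contained, whereas the paper does not give a proof at all: it simply remarks that the lemma is an immediate corollary of Lemma~3 in~\cite{AHK} and omits the details. So your second-moment argument via Cayley's formula is a genuinely different (and more elementary) route.

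A few comments that sharpen your write-up. Your vertex-disjointness claim is in fact a purely deterministic statement, and the ``rigidity'' argument you sketch proves more than you use: for any tree on $n > 2k+2$ vertices, two \emph{distinct} valid tuples $(v_0,\ldots,v_{k+1})$ and $(w_0,\ldots,w_{k+1})$ must satisfy $\{v_0,\ldots,v_k\}\cap\{w_0,\ldots,w_k\}=\emptyset$, and moreover the \emph{only} way they can overlap at all is via $v_{k+1}=w_{k+1}$. (The ``degenerate overlaps'' you worry about in your last paragraph --- an attachment point hitting an internal vertex of the other tuple, or shared initial segments --- force $T$ to be a path on at most $2k+2$ vertices and hence cannot occur.) Consequently the second-moment calculation is much cleaner than you anticipate: beyond the diagonal and the vertex-disjoint pairs, there is a single overlap type, and the Cayley count gives its contribution as $\Theta(n)=o(\mathbb{E}[X]^2)$ directly. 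The division by $2$ when passing from $X$ to $|\mathcal{P}|$ is also unnecessary: distinct tuples already yield distinct, vertex-disjoint length-$k$ subpaths, so one may take $|\mathcal{P}|=X$.

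What each approach buys: citing~\cite{AHK} is shorter and reuses a known concentration result for substructure counts in random trees; your direct argument avoids an external dependence and makes the constant $\varepsilon = e^{-(k+1)}/4$ explicit.
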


Lemma~\ref{lem::ManyBarePaths} is an immediate corollary of Lemma 3 from~\cite{AHK}; we omit the straightforward details.

\begin{lemma} \label{lem::boardPartition}
Let $k$ and $q$ be integers and let $X$ and $Y$ be sets such that $|X| = q$ and $|Y| = kq$. Let 
$H$ be a graph, where $V(H) = X \cup Y$, which satisfies the following properties:  
\begin{description}
\item [(a)] $\Delta(H[Y]) \leq q-1$.
\item [(b)] $d_H(u, Y) \leq q/2$ for every $u \in X$.
\item [(c)] $d_H(u, X) \leq q/(2k)$ for every $u \in Y$.
\end{description} 
Then there exists a partition $V(H) = V_1 \cup \ldots \cup V_q$ 
such that the following properties hold for every $1 \leq i \leq q$:
\begin{description}
\item [(i)] $|X \cap V_i| = 1$. 
\item [(ii)] $|Y \cap V_i| = k$.
\item [(iii)] $E(H[V_i]) = \emptyset$.
\end{description} 
\end{lemma}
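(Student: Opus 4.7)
The plan is to construct the partition in two largely separable steps: first decompose $Y$ into $q$ independent sets of size $k$ each, and then attach exactly one vertex of $X$ to each of these $Y$-parts in such a way that the resulting set stays independent in $H$.

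For the first step I would appeal to the Hajnal--Szemer\'edi equitable coloring theorem. Condition (a) gives $\Delta(H[Y]) \leq q-1$, and $|Y| = kq$ is divisible by $q$, so Hajnal--Szemer\'edi (with $q$ colors) provides a partition $Y = Y_1 \cup \cdots \cup Y_q$ where each $Y_i$ is independent in $H$ and $|Y_i| = k$. This handles (ii) and the $Y$-side of (iii).

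For the second step I would define the auxiliary bipartite graph $\mathcal{B}$ on vertex classes $X$ and $[q]$ in which $x \sim i$ iff $x$ has no $H$-neighbor in $Y_i$, and look for a perfect matching $\sigma : X \to [q]$. Such a matching immediately yields the desired partition by setting $V_i := \{\sigma^{-1}(i)\} \cup Y_i$, since $\sigma^{-1}(i)$ will have no $H$-neighbor inside $Y_i$. The heart of the argument is verifying Hall's condition for $\mathcal{B}$. Assume for contradiction that some nonempty $S \subseteq X$ has $|N_{\mathcal{B}}(S)| < |S|$, and set $T := [q] \setminus N_{\mathcal{B}}(S)$, so $|T| \geq q - |S| + 1$. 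By definition of $\mathcal{B}$, every $x \in S$ has at least one $H$-neighbor in each $Y_i$ with $i \in T$. Double-counting $H$-edges between $S$ and $Y^{*} := \bigcup_{i \in T} Y_i$ and using condition (c) gives
\[
|S|\,|T| \ \leq \ \sum_{y \in Y^{*}} d_H(y, X) \ \leq \ k|T| \cdot \frac{q}{2k} \ = \ \frac{q|T|}{2},
\]
so $|S| \leq q/2$. But then $|T| \geq q - |S| + 1 \geq q/2 + 1$, and every $x \in S$ satisfies $d_H(x, Y) \geq |T| > q/2$, contradicting condition (b).

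The main obstacle is exactly this Hall verification: the subtlety is that conditions (b) and (c) play complementary roles, with (c) bounding the $X$-$Y$ edges from the $Y$-side to force $|S|$ to be small, and (b) then providing the contradiction from the $X$-side. Once Hall's condition is confirmed, the rest of the proof is automatic.
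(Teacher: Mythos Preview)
Your proof is correct and follows essentially the same approach as the paper: first apply Hajnal--Szemer\'edi to equitably partition $Y$, then build the same auxiliary bipartite graph and find a perfect matching via Hall. The only cosmetic difference is that the paper observes directly that this bipartite graph has minimum degree at least $q/2$ on both sides (which immediately yields Hall), whereas you run the Hall verification by contradiction; the underlying use of conditions (b) and (c) is identical.
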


In the proof of Lemma~\ref{lem::boardPartition} we will make use of the following well known 
result due to Hajnal and Szemer\'edi~\cite{HSz}.

\begin{theorem} [Theorem 1 in~\cite{HSz}] \label{th::HajnalSzemeredi}
Let $G$ be a graph on $n$ vertices and let $r$ be a positive integer. If $\Delta(G) \leq r-1$, then
there exists a proper $r$-colouring of the vertices of $G$ such that every colour class has size
$\lfloor n/r \rfloor$ or $\lceil n/r \rceil$. 
\end{theorem}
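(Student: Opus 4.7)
The plan is to build the partition in two stages: first use the Hajnal--Szemer\'edi theorem to split $Y$ into $q$ independent sets of size $k$ each, and then match the vertices of $X$ to these sets using Hall's theorem so that each added $X$-vertex has no neighbor in its assigned set.

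For the first stage I would apply Theorem~\ref{th::HajnalSzemeredi} to the graph $H[Y]$. Since $|Y| = kq$ and $\Delta(H[Y]) \leq q-1$ by hypothesis (a), this yields a proper $q$-coloring of $H[Y]$ in which every color class has size exactly $k$. Call these color classes $Y_1, \ldots, Y_q$. Each $Y_j$ is then independent in $H$ (restricted to $Y$), so to finish it suffices to assign a distinct $Y_j$ to each $u \in X$ such that $u$ has no $H$-neighbor in its assigned $Y_j$; then $V_i := \{u_i\} \cup Y_{\sigma(u_i)}$ clearly satisfies (i), (ii), and (iii).

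For the matching stage I would define an auxiliary bipartite graph $B$ on parts $X$ and $\{Y_1, \ldots, Y_q\}$ where $u \in X$ is joined to $Y_j$ precisely when $u$ has no $H$-neighbor in $Y_j$, and aim for a perfect matching. Hypothesis (b) gives $d_H(u,Y) \leq q/2$, so $u$ is adjacent in $H$ to vertices lying in at most $q/2$ of the sets $Y_j$; thus $d_B(u) \geq q/2$. On the other side, by hypothesis (c) the total number of $H$-edges between $Y_j$ and $X$ is at most $\sum_{v \in Y_j} d_H(v,X) \leq k \cdot q/(2k) = q/2$, so at most $q/2$ vertices of $X$ have a neighbor in $Y_j$, giving $d_B(Y_j) \geq q/2$ as well.

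To verify Hall's condition for $B$, take $S \subseteq X$. If $|S| \leq q/2$ then the minimum-degree bound on a single vertex already gives $|N_B(S)| \geq q/2 \geq |S|$. If $|S| > q/2$ and $|N_B(S)| < q$, any $Y_j \notin N_B(S)$ would have all its $B$-neighbors in $X \setminus S$, forcing $|X \setminus S| \geq q/2$, contradicting $|S| > q/2$; hence $N_B(S) = \{Y_1,\ldots,Y_q\}$, so $|N_B(S)| = q \geq |S|$. Thus $B$ has a perfect matching, which defines the required bijection $\sigma : X \to \{1,\ldots,q\}$. There is no serious obstacle here: the only ingredients are Hajnal--Szemer\'edi and a straightforward Hall argument, with the degree bounds in (b) and (c) calibrated precisely so that both sides of the auxiliary bipartite graph inherit minimum degree at least $q/2$.
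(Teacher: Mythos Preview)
Your write-up is not a proof of the stated Hajnal--Szemer\'edi theorem (which the paper does not prove at all---it is cited as an external result from~\cite{HSz}); rather, it is a proof of Lemma~\ref{lem::boardPartition}, which \emph{uses} Hajnal--Szemer\'edi. As a proof of that lemma, it is correct and follows exactly the paper's approach: apply Theorem~\ref{th::HajnalSzemeredi} to $H[Y]$ to obtain $q$ independent $k$-sets, form the auxiliary bipartite ``compatibility'' graph between $X$ and these sets, observe that hypotheses (b) and (c) force minimum degree at least $q/2$ on both sides, and invoke Hall's theorem to obtain the perfect matching. The only cosmetic difference is that you spell out the Hall verification by cases, whereas the paper simply notes $\delta \geq q/2$ and appeals to Hall directly.
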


\textbf{Proof of Lemma~\ref{lem::boardPartition}}
Since $\Delta(H[Y]) \leq q-1$ holds by Property (a), it follows by Theorem~\ref{th::HajnalSzemeredi} 
that there exists a partition $Y = U_1 \cup \ldots \cup U_q$ such that $|U_i| = k$ and $E(H[U_i]) = \emptyset$ 
hold for every $1 \leq i \leq q$. Let $U = \{U_1, \ldots, U_q\}$ and let $G$ be the bipartite graph with 
parts $X := \{x_1, \ldots, x_q\}$ and $U$ where, for every $1 \leq i, j \leq q$ there is an edge of $G$ between 
$x_i$ and $U_j$ if and only if $d_G(x_i, U_j) = 0$. Since $\delta(G) \geq q/2$ holds by Properties (b) and (c), 
it follows by Hall's Theorem (see, e.g.~\cite{West}) that $G$ admits a perfect matching. Assume without loss
of generality that $\{x_i U_i : 1 \leq i \leq q\}$ is such a matching. For every $1 \leq i \leq q$ let 
$V_i = U_i \cup \{x_i\}$. It is easy to see that the partition $V(H) = V_1 \cup \ldots \cup V_q$ satisfies   
Properties (i), (ii) and (iii).
{\hfill $\Box$ \medskip\\}

\textbf{Proof of Theorem~\ref{th::randomTree}}
Let $k$ be a sufficiently large integer (e.g. $m_0$ from Lemma~\ref{lem::PathOneEndpoint} is large enough) and 
let $n$ be sufficiently large with respect to $k$. Let $T$ be a tree, chosen uniformly at random from the class 
of all labeled trees on $n$ vertices. It follows by Theorem~\ref{th::maxDeg} that, asymptotically almost surely, 
$\Delta(T) = (1 + o(1)) \log n/ \log \log n$ and by Lemma~\ref{lem::ManyBarePaths} that there exists a family 
${\mathcal P}$ of $\varepsilon n$ pairwise vertex-disjoint bare paths of $T$, such that for every $P \in {\mathcal P}$, 
$P = (v^P_0 \ldots v^P_k)$ and $v^P_k$ is a leaf of $T$. From now on we will thus assume that the tree $T$ satisfies
these properties. 

Let
$$
T' = T \setminus \left(\bigcup_{P \in {\mathcal P}} \left(V(P) \setminus \{v^P_0\} \right) \right) \,.
$$
Throughout the game, Maker maintains a set $S \subseteq V(T)$ of embedded vertices, an $S$-partial embedding 
$f$ of $T$ in $K_n \setminus B$ and a set $A = V(K_n) \setminus f(S)$ of available vertices. Initially, 
$S = \{v'\}$ and $f(v') = v$, where $v' \in V(T')$ and $v \in V(K_n)$ are arbitrary vertices.

First we describe a strategy for Maker in $(E(K_n), {\mathcal T}_n)$ and then prove that it allows her to 
build a copy of $T$ within $n-1$ moves. At any point during the game, if Maker is unable to follow the 
proposed strategy, then she forfeits the game. Certain parts of the proposed strategy are very similar to 
the strategy described in the proof of Theorem~\ref{th::manyLeaves}. Therefore, we describe these parts
rather briefly while elaborating considerably where the two strategies differ. The proposed strategy is 
divided into the following three stages.

\textbf{Stage I:} Maker builds a tree $T''$ such that the following properties hold at the 
end of this stage:  
\begin{description}
\item [(1)] $T' \subseteq T'' \subseteq T$.  
\item [(2)] $d_B(v) \leq 2\sqrt{n} \log n$ for every vertex $v \in A \cup f({\mathcal O}_T)$.
\item [(3)] $|\{P \in {\mathcal P} : v^P_1 \in S\}| \leq \sqrt{n}$ (in particular, $|V(T'')| \leq n - \varepsilon n$).
\end{description}
Moreover, Maker does so in exactly $|V(T'')| - 1$ moves.

\textbf{Stage II:} In this stage Maker completes the embedding of every path $P \in {\mathcal P}$ which was 
partially embedded in Stage I. For every $P \in {\mathcal P}$, let $0 \leq i_P \leq k$ denote the largest 
integer such that $v^P_{i_P} \in S$. For as long as there exists a path $P \in {\mathcal P}$ for which
$0 < i_P < k$, Maker plays as follows. She picks an arbitrary path $P \in {\mathcal P}$ for which $0 < i_P < k$ 
and claims an arbitrary free edge $f(v^P_{i_P}) u$, where $u \in A$. Subsequently, Maker updates $S$ and $f$
by adding $v^P_{i_P+1}$ to $S$ and setting $f(v^P_{i_P+1}) = u$.

As soon as $i_P \in \{0, k\}$ holds for every $P \in {\mathcal P}$, Stage II is over and
Maker proceeds to Stage III.

\textbf{Stage III:} Let $f({\mathcal O}_T) = \{x_1, \ldots, x_q\}$ and let $A \cup 
\{x_1, \ldots, x_q\} = V_1 \cup \ldots \cup V_q$ be a partition of $A \cup \{x_1, \ldots, x_q\}$ 
such that the following properties hold for every $1 \leq i \leq q$:
\begin{description}
\item [(a)] $|V_i| = k+1$. 
\item [(b)] $x_i \in V_i$.
\item [(c)] $E(B[V_i]) = \emptyset$.
\end{description}
For every $1 \leq i \leq q$ let ${\mathcal S}_i$ be a strategy for building a Hamilton path of 
$(K_n \setminus B)[V_i]$ such that $x_i$ is one of its endpoints in $|V_i| - 1$ moves. 
Maker plays $q$ such games in parallel, that is, whenever Breaker claims an edge of $K_n[V_i]$
for some $1 \leq i \leq q$ for which $M[V_i]$ is not yet a Hamilton path, Maker plays in 
$(K_n \setminus B)[V_i]$ according to ${\mathcal S}_i$. In all other cases, she plays in
$(K_n \setminus B)[V_j]$ according to ${\mathcal S}_j$, where $1 \leq j \leq q$ is an arbitrary
index for which $M[V_j]$ is not yet a Hamilton path.

It is evident that if Maker can follow the proposed strategy without
forfeiting the game, then she builds a copy of $T$ in $n-1$ moves. It
thus suffices to prove that Maker can indeed do so. We consider each
of the three stages separately.

\textbf{Stage I:} The exact details of Maker's strategy for this stage and the
proof that she can follow it without forfeiting the game are essentially the same as 
those for Stage I in the proof of Theorem~\ref{th::manyLeaves}. There are a few 
differences which arise since $\Delta(T)$ is not bounded (but not too large 
either -- see Theorem~\ref{th::maxDeg}) and since $T \setminus T'$ consists of 
pairwise vertex-disjoint long bare paths, rather than a matching. Defining a vertex
$v \in A \cup f({\mathcal O}_T)$ to be dangerous if $d_B(v) \geq \sqrt{n} \log n$ 
ensures that at most $2 \sqrt{n}/\log n$ vertices become dangerous throughout Stage I 
similarly to Claim~\ref{claim1}. Since the paths in ${\mathcal P}$ are pairwise vertex-disjoint, 
$\Delta(T) = o(\log n)$ and $2\sqrt{n} \log n \leq \varepsilon n/(10 \Delta(T))$, it follows that  
Claims~\ref{claim2} and~\ref{claim3} hold as well. The remaining details are omitted.  

\textbf{Stage II:} Since $e(P) = k$ holds for every $P \in {\mathcal P}$, it follows
by Property (3) that Stage II lasts $O(k \sqrt{n})$ moves and that $|A| = \Theta(n)$ 
holds at any point during this stage. Since $n$ is sufficiently large with respect to $k$, 
it follows by Property (2) that $d_B(v) = O(\sqrt{n} \log n)$ holds for every vertex 
$v \in A \cup f({\mathcal O}_T)$ at any point during this stage. We conclude that Maker 
can indeed follow the proposed strategy for this stage. 

\textbf{Stage III:} Since, as noted above, $d_B(v) = O(\sqrt{n} \log n)$ holds for every vertex $v \in A \cup 
f({\mathcal O}_T)$ at the end of Stage II and since $n$ is sufficiently large with respect to $k$, it follows 
by Lemma~\ref{lem::boardPartition} that the required partition exists. Moreover, it follows by Property (c), 
by the choice of $k$ and by Lemma~\ref{lem::PathOneEndpoint} that Maker can follow the proposed strategy 
for this stage.
{\hfill $\Box$ \medskip\\}

We end this section with a proof of Theorem~\ref{th::PathFromLeaf}. The main idea is similar to the proof of
Theorem~\ref{th::longBarePath} given in Section~\ref{sec::TwoTrees}. That is, we first embed 
the tree $T$ except for a sufficiently long bare path $P$ between a leaf and another vertex and then
embed $P$, recalling that one of its endpoints was already embedded. We will do so without wasting any moves.
We can thus use Theorem~\ref{TreesWithLongPath} for the former and Lemma~\ref{lem::PathOneEndpoint} for the latter.  

\textbf{Proof of Theorem~\ref{th::PathFromLeaf}}
Let $k = \binom{\D}{2} + 1$, let $m_0 = m_0(k)$ be the constant whose existence follows
from Lemma~\ref{lem::PathOneEndpoint} and let $m_3 = \max \{m_0, (\D+1)^2\}$. Let $P$ be a bare path in $T$ 
of length $m_3$ with endpoints $x'_1$ and $x'_2$, where $x'_2$ is a leaf. Let $T'$ be the tree which is obtained from $T$ 
by deleting all the vertices in $V(P) \setminus \{x'_1\}$. 

Maker's strategy consists of two stages. In the first stage she embeds $T'$ using the strategy
whose existence follows from Theorem~\ref{TreesWithLongPath} (with $r=1$) while ensuring that Properties 
(i) and (ii) are satisfied. Let $f : T' \rightarrow M$ be an isomorphism, let $x_1 = f(x'_1)$, 
let $A = V(K_n) \setminus f(V(T'))$, let $U = A \cup \{x_1\}$ and let $G = (K_n \setminus B)[U]$.

In the second stage she embeds $P$ into $G$ such that $x_1$ is the non-leaf endpoint. She does so 
using the strategy whose existence follows from Lemma~\ref{lem::PathOneEndpoint} which is applicable by the choice 
of $m_3$ and by Property (ii). Hence, $T \subseteq M$ holds at the end of the second stage, that is, 
Maker wins the game.

It follows by Theorem~\ref{TreesWithLongPath} that the first stage lasts exactly 
$v(T') - 1 = n - |V(P)| = n - |U|$ moves. It follows by Lemma~\ref{lem::PathOneEndpoint} that the second 
stage lasts exactly $|U| - 1$ moves. Therefore, the entire game lasts exactly $n-1$ moves as claimed.
{\hfill $\Box$ \medskip\\}

\section{Concluding remarks and open problems}
\label{sec::openprob}

\textbf{Building trees in the shortest possible time}. 

As noted in the introduction, there are trees $T$ on $n$
vertices with bounded maximum degree which Maker cannot build in $n-1$ moves. In this paper we proved that Maker 
can build such a tree $T$ in at most $n$ moves if it admits a long bare path and in at most $n+1$ moves if it 
does not. We do not believe that there are bounded degree trees that require Maker to waste more than one move. 
This leads us to make the following conjecture.

\begin{conjecture} \label{conj::nMoves}
Let $\Delta$ be a positive integer. Then there exists an integer
$n_0 = n_0(\Delta)$ such that for every $n \geq n_0$ and for every
tree $T=(V,E)$ with $|V|=n$ and $\Delta(T) \leq \Delta$, Maker has a strategy 
to win the game $(E(K_n), {\mathcal T}_n)$ within $n$ moves.
\end{conjecture}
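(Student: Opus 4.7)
The plan is to refine the two-stage strategy of Theorem~\ref{th::manyLeaves} so as to save the one extra move currently lost in Stage II. First, apply Theorem~\ref{th::longBarePath}: if $T$ admits a bare path of length $m_1$, Maker already wins in $n$ moves, so one may assume $T$ has no bare path of length $m_1$. Then Lemma~\ref{lem::NoPathLargeMatching} yields $\varepsilon n$ leaves whose parent set has size at least $\varepsilon n/\Delta$. The source of the extra move in Theorem~\ref{th::manyLeaves} is entirely Lemma~\ref{FastPerMatBP}, which costs $|A|+2$ moves; Remark~\ref{rem::optimal} shows this bound is sharp for $K_{n,n}$ minus a perfect matching. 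Thus the conjecture reduces to arranging Stage I so that the bipartite leftover graph is structurally different from the extremal example.

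Second, I would split into two complementary cases according to the fine structure of $T$. If $T$ has many \emph{twin leaves}, that is parents with at least two leaf children, reserve two leaves per such parent. The terminal phase is then not a perfect matching problem but a bipartite $(1,2)$-cover problem in which each reserved parent needs multiple distinct available vertices. The added flexibility should let Maker absorb Breaker's threats more economically and complete the embedding in $|A|+1$ moves via a direct potential argument in the spirit of Theorem~\ref{TreesWithLongPath}. If, on the other hand, $T$ has few twin leaves, then a careful application of Lemma~\ref{MichaelTreeDistinction} at scales below $m_1$ should produce many vertex-disjoint short bare paths (of length at least $m_0$) whose endpoints include leaves; in that regime, after embedding the bulk of $T$ one can deploy Lemma~\ref{lem::PathOneEndpoint} in parallel, exactly as in Stage III of the proof of Theorem~\ref{th::randomTree}, to build the path extensions optimally.

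Third, a critical technical step is to ensure that at the end of Stage I the bipartite graph of still-available edges has the structural property needed by whichever matching-or-path lemma is invoked. The dangerous-vertex bookkeeping of Theorem~\ref{th::manyLeaves} already keeps Breaker's degree on the relevant vertices small enough, so the required minimum-degree hypothesis is met with room to spare. The more delicate requirement is a global one -- that the bipartite graph does not locally resemble $K_{n,n}$ minus a perfect matching near any reserved parent. I would try to enforce this either by randomising the choice of reserved leaf set and arguing via a first-moment calculation that the bad substructure is avoided, or by carefully selecting at least three leaves at some parent so that any near-pathological configuration can be perturbed away by one of Maker's first Stage-II moves.

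The main obstacle is exactly this final step. Remark~\ref{rem::optimal} tells us that one cannot win the bipartite perfect matching game in $n+1$ moves with only degree hypotheses, so any proof must exploit a more refined structural feature. In the few-twins regime the path-based strategy of Theorem~\ref{th::randomTree} should go through, because the unbounded-degree complications of that proof disappear under $\Delta(T)\le\Delta$. The genuinely hard case is the rigid middle ground -- trees such as the complete binary tree -- where twins are plentiful but their parents sit at the bottom of a rigid spine. Handling this case without wasting a move seems to require a bipartite matching lemma strictly stronger than Lemma~\ref{FastPerMatBP}, one that sidesteps the extremal configuration of Remark~\ref{rem::optimal}; proving such a lemma is, in my view, the main technical content of any resolution of Conjecture~\ref{conj::nMoves}.
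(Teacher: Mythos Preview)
This statement is presented in the paper as an open \emph{conjecture}; the paper offers no proof. Its surrounding discussion says precisely what your first paragraph says: Theorem~\ref{th::longBarePath} settles the long-bare-path case, the remaining difficulty lies in trees without such a path, and the extra move there is lost in Lemma~\ref{FastPerMatBP}. Your diagnosis of Remark~\ref{rem::optimal} as the obstruction is accurate, and your proposal is explicitly a research plan rather than a proof, with the gap honestly identified.

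Two specific comments on that plan. First, in your few-twins case you invoke Lemma~\ref{MichaelTreeDistinction} to obtain many short bare paths \emph{ending at leaves}, but that lemma only produces bare paths and says nothing about their endpoints. A tree can have linearly many leaves, no twin leaves, and yet have every leaf adjacent to a vertex of degree at least $3$ (think of a caterpillar with one pendant leaf at every other spine vertex); then no bare path of length exceeding $2$ ends at a leaf, and the parallel application of Lemma~\ref{lem::PathOneEndpoint} is unavailable. You would need a genuinely new structural argument here, not just Lemma~\ref{MichaelTreeDistinction} at a smaller scale.

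Second, you single out the complete binary tree as the genuinely hard case. The paper, in the paragraph immediately following the conjecture, asserts (with details omitted) that Maker \emph{can} build the complete binary tree in $n$ moves. So the twin-rich extreme is apparently tractable by some direct argument, and the real difficulty may sit in a different part of your case split than you expect --- perhaps exactly in the few-twins, short-bare-path regime where your plan is currently weakest. In any event, since neither the paper nor your proposal contains a proof, the conjecture remains open.
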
 

It follows by Theorem~\ref{th::longBarePath} that the assertion of Conjectute~\ref{conj::nMoves} is true
for bounded degree trees which admit a long bare path; the problem is with trees that do not admit such
a path. Nevertheless, we can prove Conjectute~\ref{conj::nMoves} for many (but not all) such trees as well.
For example, we can prove (but omit the details) that Maker has a strategy to build a complete binary tree 
in $n$ moves (recall from the introduction that this is tight).   

\textbf{Building trees without wasting moves}. 

As previously noted, there are trees which Maker can build in 
$n-1$ moves (such as the path on $n$ vertices) and there are trees which require at least $n$ moves (such as
the complete binary tree). It would be interesting to characterize the family of all (bounded degree) trees
on $n$ vertices which, playing on $K_n$, Maker can build in exactly $n-1$ moves. 

\textbf{Strong tree embedding games}. 

As noted in~\cite{FH2011}, an explicit very fast winning strategy for Maker in a weak game can sometimes
be adapted to an explicit winning strategy for Red in the corresponding strong game. Since it was proved 
in~\cite{FHK2012} that Maker has a strategy to win the weak tree embedding game $(E(K_n), {\mathcal T}_n)$ 
within $n + o(n)$ moves, it was noted in~\cite{FH} that one could be hopeful about the possibility of 
devising an explicit winning strategy for Red in the corresponding strong game. The first step towards 
this goal is to find a much faster strategy for Maker in the weak game $(E(K_n), {\mathcal T}_n)$. This 
was accomplished in the current paper.    

\textbf{Building trees quickly on random graphs}.

The study of fast winning strategies for Maker on random graphs was initiated in~\cite{CFKL2012}. The problem of 
determining the values of $p = p(n)$ for which asymptotically almost surely Maker can win $(E(G(n,p)), {\mathcal T}_n)$ 
quickly (say, within $n + o(n)$ moves), where $T$ is any tree with bounded maximum degree was raised in that paper. 
Note that the game $(E(K_n), {\mathcal T}_n)$ studied in this paper is the special case with $p=1$. It seems plausible 
that the methods developed in the current paper combined with those of~\cite{CFKL2012} could be helpful when addressing this problem. 
It should however be noted that the exact threshold for the appearance in $G(n,p)$ of some fixed bounded degree spanning tree is not 
known in general and is an important open problem in the theory of random graphs. It would thus be very hard to answer the 
analogous general question for games. However, the exact threshold is known in several special cases, such as trees with linearly 
many leaves~\cite{HKS}. Therefore, one could try to adjust the proof of Theorems~\ref{th::manyLeaves} to $G(n,p)$ with $p$ being as close as possible 
to the threshold $\log n/n$. Moreover, a weaker (but far from trivial) general upper bound was proved in~\cite{Kri} so one could 
at least try to prove that Maker wins quickly on these denser random graphs. Finally, note that some results about $(E(G(n,p)), {\mathcal T}_n)$,
where $p \geq C n^{-1/3} \log^2 n$, follow from more general results proved in~\cite{JKS}. However, even for this range of probabilities, a different 
argument is needed in order to prove that Maker wins quickly.

\section*{Acknowledgement}

We would like to thank Michael Krivelevich for helpful comments.

\end{document}